\newcommand{\mysection}[1]{\section{#1}
      \setcounter{equation}{0}}
\newcommand\cbrk{\text{$]$\kern-.15em$]$}} 
\newcommand\opar{\text{\raise.2ex\hbox{${\scriptstyle | }$}\kern-.34em$($} }
\DeclareMathOperator*{\esssup}{ess\,sup}
\newtheorem{theorem}{Theorem}[section]
\newtheorem{lemma}[theorem]{Lemma}
\newtheorem{proposition}[theorem]{Proposition}
\newtheorem{corollary}[theorem]{Corollary}
\theoremstyle{definition}
\newtheorem{assumption}{Assumption}[section]
\newtheorem{definition}{Definition}[section]
\theoremstyle{remark}
\newtheorem{remark}{Remark}[section]
\newcommand{\R}{\mathbb{{R}}}
\newcommand{\N}{\mathbb{{N}}}
\newcommand\bF{\mathbb{F}}
\newcommand\bI{\mathbb{I}}
\newcommand\bL{\mathbb{L}}
\newcommand\bR{\mathbb{R}}
\newcommand\bS{\mathbb{S}}
\newcommand\bW{\mathbb{W}}
\newcommand\frJ{\mathfrak{J}}
\newcommand\cA{\mathcal{A}}
\newcommand\cB{\mathcal{B}}
\newcommand\cH{\mathcal{H}}
\newcommand\cJ{\mathcal{J}}
\newcommand\cL{\mathcal{L}}
\newcommand\cM{\mathcal{M}}
\newcommand\cN{\mathcal{N}}
\newcommand\cR{\mathcal{R}}
\newcommand\cS{\mathcal{S}}
\newcommand\cU{\mathcal{U}}
\newcommand\cZ{\mathcal{Z}}
 \newcommand{\sumstar}
 {\operatornamewithlimits{\sum@\kern-.2em\raise1ex\hbox{*}}}
\begin{document}

\author[M. De Le\'on-Contreras]{Marta De Le\'on-Contreras}
\address{Departamento de Matem\'aticas, 
Facultad de Ciencias, Universidad Aut\'onoma de Madrid, Spain.}
\email{marta.leon@uam.es}

\author[I. Gy\"ongy]{Istv\'an Gy\"ongy}
\address{School of Mathematics and Maxwell Institute, University of Edinburgh, Scotland, United Kingdom.}
\email{i.gyongy@ed.ac.uk}

\author[S. Wu]{Sizhou Wu}
\address{School of Mathematics,
University of Edinburgh,
King's  Buildings,
Edinburgh, EH9 3JZ, United Kingdom}
\email{Sizhou.Wu@ed.ac.uk}

\keywords{Integro-differential equation, Bessel potential spaces, Interpolation couples}

\subjclass[2010]{Primary 45K05, 35R09; Secondary 47G20}

\begin{abstract} A class of (possibly) degenerate integro-differential equations of parabolic type is considered, 
which includes the Kolmogorov equations for jump diffusions. 
Existence and uniqueness of the solutions are established in Bessel potential spaces 
and in Sobolev-Slobodeckij spaces. Generalisations 
to stochastic integro-differential equations, 
arising in filtering theory of jump diffusions, will be given in a forthcoming paper. 
\end{abstract}

\title[Integro-Differential Equations]{On solvability 
of integro-differential equations}

\maketitle

\mysection{Introduction}
We consider the equation 
\begin{equation}                                                                     \label{eq1}
\frac{\partial}{\partial t} u(t,x)=\mathcal{A} u(t,x)+f(t,x)
\end{equation}
on $H_T=[0,T]\times\bR^d$ for a given $T>0$, 
with initial condition $u(0,x)=\psi(x)$ for $x\in\R^d$, 
where $\cA$ is an integro-differential operator 
of the form $\cA=\cL+\cM+\cN+\cR$, with a ``zero-order" linear operator 
$\cR$, a second order differential operator 
$$
\mathcal{L}(t)=a^{ij}(t,x)D_{ij}+b^i(t,x)D_i +c(t,x) 
$$
and linear operators  $\cM$ and $\cN$ defined by 
\begin{equation}                                             \label{def M}
\cM(t) \varphi(x)=\int_{Z}(\varphi(x+
\eta_{t,z}(x))-\varphi(x)-\eta_{t,z}(x)\nabla \varphi(x))\,\mu(dz), 
\end{equation}
\begin{equation}                                             \label{def N}
\mathcal{N}(t)\varphi(x)=\int_{Z}(\varphi(x+\xi_{t,z}(x))-\varphi(x))\,{\nu}(dz)
\end{equation}
for a suitable class of real-valued functions 
$\varphi(x)$ on $\bR^d$. Here $a^{ij}$, $b^i$ and $c$ 
are real-valued bounded functions defined on $H_T$, $\mu$ 
and $\nu$ are $\sigma$-finite measures on a measurable space $(Z,\cZ)$. 
The functions  
$\eta$ and $\xi$ are $\R^d$-valued mapping defined on $H_T\times Z$. 
Under ``zero-order operators" we mean bounded linear operators $\cR$ mapping 
the Sobolev spaces $W^k_p$ into themselves for $k=0,1,2,..,n$ for some $n$.  
Examples include integral operators $\cR(t)$ defined by 
\begin{equation}                                                               \label{def R}
\cR(t)\varphi(x)=\int_{Z}\varphi(x+\zeta_{t,z}(x))\,{\lambda}(dz)
\end{equation}
with appropriate functions $\zeta$ on $H_T\times Z$ and finite measures $\lambda$ 
on $\cZ$. 

Our aim is to investigate the solvability of equation \eqref{eq1} 
in Bessel potential spaces $H^m_p$ and Sobolev-Slobodeckij  spaces $W^m_p$ 
for $p\geq2$ and $m\in[1,\infty)$. 

Such kind of equations arise, for example, as Kolmogorov equations for Markov 
processes given by stochastic differential equations, driven by Wiener processes 
and Poisson random measures, see e.g., \cite{A}, \cite{BMR}, \cite{GM}, \cite{GM2} and \cite{Ja}. 
They play important roles in studying 
random phenomena modelled by 
Markov processes with jumps, in physics, biology, engineering and finance, 
see e.g., \cite{BV}, \cite{CT}, \cite{Mu}, \cite{Pa} 
and the references therein.  There is a huge literature on the solvability of 
these equations, but in most of the publications some kind of non-degeneracy,  
conditions on the equations, or specific assumptions on the measures $\mu$ 
and $\nu$ are assumed. Results in this direction 
can be found, for example,  
in \cite{GM}, \cite{GM2}, \cite{Ja},  \cite{MPh1}, \cite{MP1}, \cite{MP2}, 
 \cite{MX} and \cite{Pr1}, 
and for nonlinear equations of the type \eqref{eq1}, arising in the theory of stochastic control 
of random processes with jumps, we refer to \cite{GM2} and \cite{Pr2}. 
Extensions of the $L_p$-theory of Krylov \cite{Kr1} to stochastic equations and 
systems 
of stochastic equations  
with integral operators of the type 
$\cM$ and $\cN$ above are developed in \cite{CheKim}, \cite{CL}, \cite{KK}, 
\cite{KL} and \cite{MP3}. 

Note that, since with a positive constant $c_{\alpha,d}$ 
the fractional Laplacian operator $\Delta^{\alpha/2}:=-(-\Delta)^{\alpha/2}$ 
 has the integral representation
$$
\Delta^{\alpha/2}\varphi(x)=
\lim_{\varepsilon\downarrow0}c_{\alpha,d}
\int_{|z|\geq\varepsilon}(\varphi(x+z)-\varphi(x))\frac{1}{|z|^{d+\alpha}}\,dz,  
\quad \alpha\in(0,2) 
$$
for smooth functions $\varphi$ with compact support on $\bR^d$, 
we have 
$\Delta^{\alpha/2}=\cN+\cR-\bar c_{\alpha,d}$ for $\alpha\in(0,1)$ and 
$\Delta^{\alpha/2}=\cM+\cR-\bar c_{\alpha,d}$ for $\alpha\in[1,2)$, where 
$\cM$, $\cN$ and $\cR$ are defined in \eqref{def M}, \eqref{def N} 
and \eqref{def R}, with 
$\eta_{t,z}(x)=\xi_{t,z}(x)=\zeta_{t,z}(x)=z\in Z:=\bR^d\setminus\{0\}$,   
$\mu(dz)=\nu(dz)=c_{\alpha,d}{\bf 1}_{|z|\leq1}|z|^{-d-\alpha}\,dz$, 
$\lambda(dz)=c_{\alpha,d}{\bf 1}_{|z|>1}|z|^{-d-\alpha}\,dz$ and with
$$
\bar c_{\alpha,d}=c_{\alpha,d}\int_{|z|>1}\frac{1}{|z|^{d+\alpha}}\,dz.
$$ 
Thus examples for equation  \eqref{eq1} include equations with 
$\Delta^{\alpha/2}$, 
$\alpha\in(0,2)$. There are many important results in the literature about 
fractional operators and about equations containing them, see e.g., 
\cite{CS}, \cite{CSS}, \cite{ST} and the references therein. 

In this paper we are interested in the solvability of equation \eqref{eq1} when 
it  can degenerate, and besides some integrability conditions,  no 
specific conditions on the measures $\mu$ and $\nu$ are assumed.  
An $L_2$-theory of degenerate linear elliptic and parabolic PDEs is developed
in \cite{O1}, \cite{O2}, \cite{OR1} and 
\cite{OR2}. The solvability in $L_2$-spaces of linear degenerate
stochastic PDEs of parabolic type were first studied in \cite{KR1977} (see also \cite{Ro}). 
The first existence and uniqueness theorem on solvability of these
equations in $W^m_p$ spaces, for integers $m\geq1$ and any $p\geq2$, 
is presented in \cite{KR}.
A gap in the proof of a crucial $L_p$-estimate in   
\cite{KR} is filled in, and the existence and uniqueness theorem is substantially  
improved in \cite{GGK}. 
The solvability of degenerate stochastic integro-differential equations, which include 
the type of equations \eqref{eq1}, are studied in \cite{Da}, \cite{LM1} and \cite{LM2}.  
Existence and uniqueness theorems are obtained  in H\"older spaces in 
\cite{LM1}, and in $L_2$-spaces 
 in \cite{Da} and \cite{LM2}. 
Our main result,  Theorem \ref{theorem main} below, is an existence 
and uniqueness theorem in $L_p$-spaces,  
which generalises the corresponding results in \cite{Da} and \cite{LM2}, 
but instead of stochastic integro-differential equations here we consider only 
the deterministic equation \eqref{eq1}.  
A generalisation of Theorem \ref{theorem main} to stochastic integro-differential 
equations will be presented in a forthcoming paper.

In conclusion we introduce some notations used throughout the paper. 
For vectors $v=(v^i)$ and $w=(w^i)$ in $\bR^d$ we 
use the notation $vw=\sum_{i=1}^mv^iw^i$ and $|v|^2=\sum_i|v^i|^2$. 
For real-valued Lebesgue measurable functions 
$f$ and $g$ defined on $\bR^d$ the notation $(f,g)$ 
means the integral of the product $fg$ over $\bR^d$ 
 with respect to the Lebesgue measure on $\bR^d$.   
A finite list $\alpha=\alpha_1\alpha_2,...,\alpha_n$  of numbers $\alpha_i\in\{1,2,...,d\}$ 
is called a multi-number of length $|\alpha|:=n$, and the notation 
$$
D_{\alpha}:=D_{\alpha_1}D_{\alpha_2}...D_{\alpha_n}
$$ 
is used for integers $n\geq1$, 
where 
$$
D_i=\frac{\partial}{\partial x^i}, \quad \text{for $i\in\{1,2,...,d\}$}. 
$$
We use also the multi-number $\epsilon$ of length $0$, 
and agree that $D_{\epsilon}$ means the identity operator. 
For an integer $n\geq0$ and functions $v$ on $\bR^d$, whose 
partial derivatives up to order $n$ are functions, 
we use the notation $D^nv$ for the collection 
$
\{D_{\alpha}v:|\alpha|=n\}, 
$
and define
$$
|D^nv|^2=\sum_{|\alpha|=n}|D_{\alpha}v|^2. 
$$
For differentiable functions $v=(v^1,...,v^d):\bR^d\to \bR^d$ 
the notation $Dv$ means the Jacobian matrix 
whose $j$-th entry in the $i$-th row is $D_jv^i$. 

For a separable 
Banach space $V$ we use the notation $L_p([0,T],V)$ for the space of 
Borel functions $f:[0,T]\to V$ such that $|f|^p_V$ has finite integral with respect 
to the Lebesgue measure on $[0,T]$. The Borel $\sigma$-algebra on $V$ 
is denoted by $\cB(V)$. 
 The notations $C([0,T],V)$ 
and $C_w([0,T],V)$ mean the space of $V$-valued functions on $[0,T]$, 
which are continuos with respect to the strong topology and with respect to the 
weak topology, respectively, on $V$. 
For $m\in\bR$ and $p\in(1,\infty)$ we use the notation $H^m_p$ 
for the Bessel potential space 
with exponent $p$ and order $m$, defined as the space of 
generalised functions  $\varphi$ on $\bR^d$ such that 
$$
(1-\Delta)^{m/2}\varphi\in L_p\quad\text{and}\quad 
|\varphi|_{H^m_p}:=|(1-\Delta)^{m/2}\varphi|_{L_p}<\infty,  
$$ 
where $\Delta=\sum_{i=1}^dD_i^2$, and $L_p$ is the space of real-valued 
Borel functions $f$ on $\bR^d$ such that 
$$
|f|^p_{L_p}:=\int_{\bR^d}|f(x)|^p\,dx<\infty. 
$$
For $p\in[1,\infty)$ and integers $m\geq0$ the notation $W^m_p$ means the 
Sobolev space defined as the completion of $C_0^{\infty}$, the space of smooth functions 
with compact support on $\bR^d$, in the norm 
$$
|\varphi|_{W^m_p}:=\sum_{|\alpha|\leq m}|D_{\alpha}\varphi|_{L_p}.  
$$
For integers $m\geq0$ the space $W^{m}_{\infty}$ 
is the completion of $C^{\infty}_b$, 
the space of bounded functions on $\bR^d$ with bounded smooth derivatives,  
in the norm 
$$
|\varphi|_{W^m_{\infty}}:=\sum_{|\alpha|\leq m}\esssup|D_{\alpha}\varphi|. 
$$
One knows that $H^m_p$ and 
$W^m_p$ are the same as vector spaces, and their norms are equivalent for 
$p\in(1,\infty)$ and integers $m\geq0$. 
When $m>0$ is not an integer, then $W^m_p$ denotes 
space of functions $f\in W^{\lfloor m\rfloor}_p$ such that 
$$
[D_{\alpha}f]^p_{\{m\},p}:=\int_{\bR^d}\int_{\bR^d}
\frac{|D_{\alpha}f(x)-D_{\alpha}f(y)|^p}{|x-y|^{p\{m\}+d}}\,dx\,dy<\infty 
$$
for every multi-index $\alpha$ of length $\lfloor m\rfloor$, 
where $\lfloor m\rfloor$ is the largest integer smaller than  $m$, and 
$\{m\}=m-\lfloor m\rfloor$. When $m>0$ is not an integer, then $W^m_p$ with the norm 
$$
|f|_{W^m_p}=|f|_{W^{\lfloor m\rfloor}_p}+\sum_{|\alpha|=\lfloor m\rfloor}[D_{\alpha}f]_{\{m\},p}
$$
 is a Banach space, called {\it Slobodeckij space}. 
Derivatives are understood in the generalised sense unless otherwise noted.
The summation convention with respect to repeated indices 
is used thorough the paper, where it is not indicated otherwise. For basic notions 
and results on solvability of parabolic PDEs 
in Sobolev spaces we refer to \cite{Kbook}. 

The paper is organised as follows. 
The formulation of the problem and the main result, Theorem \ref{theorem main}, 
is in Section \ref{section formulation}. Some technical tools and the crucial 
$L_p$ estimates are collected in Sections \ref{section preliminaries} 
and \ref{section estimates}, respectively. The proof of Theorem \ref{theorem main} 
is given in the last section, Section \ref{section mainproof}.

\mysection{Formulation of the main results}                 \label{section formulation}

Let $K$ be a constant and let $\bar\eta$ and $\bar\xi$ be nonnegative 
$\cZ$-measurable functions on $Z$ such that 
$$
\quad K^2_{\eta}:=\int_{Z}\bar\eta^2(z)\,\mu(dz)<\infty,
 \quad K_{\xi}:=\int_{Z}\bar{\xi}(z)\,\nu(dz)<\infty. 
$$
Let $p\in[2,\infty)$ and $m\geq0$ be real numbers, and let 
$\lceil m\rceil$ denote  the smallest integer which is greater than or equal to $m$. 
We make the following assumptions.  
\begin{assumption}                                                                           \label{assumption L}
The derivatives of $c$ in $x\in\bR^d$ 
up to order $\lceil m\rceil$, and the derivatives of $b^i$ in $x$ up to 
order $\max\{\lceil m\rceil,1\}$  are Borel functions on $H_T$, bounded by 
$K$ for all $i=1,2,..,d$. The derivatives of $a^{ij}$ in $x$ 
up to order $\max\{\lceil m\rceil,2\}$ are  Borel functions 
on $H_T$ for $i,j=1,...,d$, and are bounded by $K$. 
Moreover, 
$a^{ij}=a^{ji}$ for all $i,j=1,...,d$ and for $dt\otimes dx$-almost all $(t,x)\in H_T$ 
\begin{equation}                                            \label{a}
a^{ij}z^iz^j\geq0\quad\text{for all $(z^1,...,z^d)\in\bR^d$}. 
\end{equation}
\end{assumption}

\begin{assumption}                                                                           \label{assumption M}
The function $\eta=(\eta^i)$ is an $\bR^d$-valued $\cB(H_T)\otimes\cZ$-measurable 
mapping on $H_T\times Z$, its derivatives in $x\in\bR^d$ 
up to order $\max\{\lceil m\rceil,3\}$ exist and are continuous in $x$,  such that 
$$
|\eta|\leq\bar \eta,\quad |D^k\eta|\leq \bar\eta\wedge K, \quad k=1,2,...,\max(\lceil m\rceil,3)=:m_{\eta}
$$
for all $(t,x,z)\in H_T\times Z$, and  
$$
K^{-1}\le |\det(\mathbb{I}+\theta D \eta_{t,z}(x))|
$$
for all $(t,x,z,\theta)\in H_T\times Z\times [0,1]$, 
where $\mathbb{I}$ is the $d\times d$ identity matrix and recall that 
$D \eta$ denotes the Jacobian matrix of $\eta$. 
\end{assumption}
\begin{remark}                                                                    \label{remark ibp}
By Taylor's formula we have 
$$
v(x+\eta(x))-v(x)-\eta(x)\nabla v(x)
=\int_0^1\eta^k(x)(v_k(x+\theta\eta(x))-v_k(x))\,d\theta
$$
$$
=\int_0^1\eta^k(x)D_k(v(x+\theta\eta(x))-v(x))\,d\theta
-\int_0^1\theta\eta^k(x)\eta^l_k(x)v_l(x+\theta\eta(x))\,d\theta
$$
for every $v\in C_0^{\infty}$, where to ease notation 
we do not write the arguments $t$ and $z$ and write $v_k$ instead of $D_kv$ 
for functions $v$. 
Due to Assumption \ref{assumption M} these equations extend to $v\in W^1_p$ 
for $p\geq2$ as well. 
Hence after changing the order of integrals, by integration by parts we 
obtain 
$$
(\cM v,\varphi)=-(\cJ^{k}v,D_k\varphi)+(\cJ^{0}v,\varphi)
$$
for $\varphi\in C_0^{\infty}$, 
with 
\begin{align}                                                       
\cJ^{k}(t)v(x)
=&\int_0^1\int_{Z}
\eta^k(v(\tau_{\theta\eta}(x))-v(x))\,\mu(dz)\,d\theta, \quad k=1,2,...,d,      \label{def Jk}\\
\cJ^{0}(t)v(x)
=&-\int_0^1\int_{Z}
\{\sum_k\eta^k_k(v(\tau_{\theta\eta}(x))-v(x))+\theta\eta^k(x)\eta^l_k(x)v_l(\tau_{\theta\eta}(x))\}
\,\mu(dz)\,d\theta,                                                                                                          \label{def J0}
\end{align}
where for the sake of short notation   
the arguments $t,z$ of $\eta$ and $\eta_k$ have been omitted, and 
\begin{equation}                                                                                                       \label{tau0}
\tau_{\theta\eta}(x):=x+\theta\eta_{t,z}(x)\quad \text{for $x\in\bR^d$, $t\in[0,T]$, $z\in Z$ and 
$\theta\in[0,1]$}.  
\end{equation}

\end{remark}

\begin{assumption}                                                            \label{assumption N}
The function $\xi=(\xi^i)$ is an $\bR^d$-valued $\cB(H_T)\otimes\cZ$-measurable 
mapping on $H_T\times Z$, its derivatives in $x\in\bR^d$ 
up to order $\max\{\lceil m\rceil,2\}$ exist and are continuous in $x$ such that 
$$
|\xi|\leq\bar \xi,\quad |D^k\xi|\leq \bar\xi\wedge K, \quad k=1,2,...,\max(\lceil m\rceil,2)=:m_{\xi}
$$
for all $(t,x,z)\in H_T\times Z$, 
and 
$$
K^{-1}\le |\det(\mathbb{I}+\theta D \xi_{t,z}(x))|
$$
for all $(t,x,z,\theta)\in H_T\times Z\times [0,1]$.  
\end{assumption}

\begin{assumption}                                                            \label{assumption R}
For $\varphi\in C_0^{\infty}$ we have 
$$
|\cR\varphi|_{W^n_p} \leq K|\varphi|_{W^n_p}\quad 
\text{for integers $n=0,1,...,\lceil m\rceil $}. 
$$
\end{assumption}
\begin{remark}
Obviously there are many important examples of linear operators 
satisfying this condition. By Lemma \ref{lemma chainrule2} below it is not difficult to show that 
the operator $\cR$ defined in \eqref{def R} satisfies 
Assumption \ref{assumption R} if 
$\zeta=(\zeta^i)$ is an $\bR^d$-valued $\cB(H_T)\otimes\cZ$-measurable 
mapping on $H_T\times Z$ and it is a $C^{\lceil m\rceil}$-diffeomorphism 
of $\bR^d$ for every $(t,z)\in[0,T]\times Z$
such that 
$$
|D^k\zeta|\leq K, \quad k=1,2,...,\lceil m\rceil,\quad 
K^{-1}\le |\det(\mathbb{I}+D \zeta_{t,z}(x))|
$$
for all $(t,x,z)\in H_T\times Z$.
\end{remark}

Let $V^s_p$ denote $H^s_p$ or $W^s_p$ for every $s\geq0$. 
\begin{assumption}                                                                         \label{assumption free} 
We have $\psi\in V^m_p$ and $f\in L_p([0,T],V^m_p)$. 
\end{assumption}

Using Remark \ref{remark ibp} 
we define the notion of generalised solutions to \eqref{eq1} 
as follows. 
 
\begin{definition}                                                                           \label{definition solution}
An $L_p(\bR^d)$-valued continuous function 
$u=u(t)$, $t\in[0,T]$ is a generalised solution 
to equation \eqref{eq1} with initial condition 
$u(0)=\psi$, if $u(t)\in W^1_p(\bR^d)$ 
for $dt$-almost every $t\in[0,T]$, $u\in L_p([0,T], W^1_p)$,  
and	
\begin{equation}                                                                      \label{solution}
	(u(t),\varphi)=(\psi,\varphi)
	+\int_0^t\langle\cA u(s),\varphi\rangle+(f(s),\varphi)\,ds 
\end{equation}
for every $\varphi\in C_0^{\infty}(\bR^d)$ and $t\in[0,T]$, where 
$$
\langle\cA u,\varphi\rangle
:=-(a^{ij}D_ju, D_i\varphi)+(\bar {b}^iD_iu+cu,\varphi)
-(\cJ^{i}u,D_i\varphi)+(\cJ^{0}u,\varphi)
$$
$$
+(\cN u,\varphi)+(\cR u,\varphi)
$$
with $\bar b^i=b^i-D_ja^{ij}$.
 \end{definition}
 
Observe that, if Assumptions \ref{assumption M} and \ref{assumption N} hold, 
then there is a constant $N$ such that 
$$
|\mathcal{J}^{(0)}(s)v|_{L^p} \le N|v|_{W^1_p},
\quad
|\mathcal{J}^{(k)}(s)v|_{L^p} \le N|v|_{W^1_p}, 
\quad 
|\mathcal{N}(s)v|_{L^p} \le N|v|_{W^1_p}, 
$$
for all $v\in W^1_p$ and $s\in[0,T]$ 
(see Proposition \ref{proposition LMN} below).  
Thus 
$\langle\cA u,\varphi\rangle$ is well-defined when 
Assumptions \ref{assumption L} through \ref{assumption R} are satisfied.

\begin{theorem}                                                                                     \label{theorem main}
Let Assumptions 
\ref{assumption L} through \ref{assumption free} hold with $m\geq1$. 
Then equation \eqref{eq1} with initial condition $u(0)=\psi$ 
has a generalised solution $u$, which is a weakly continuous 
$V^{m}_p$-valued function, 
and it is strongly continuous as a $V^{s}_p$-valued function of $t\in[0,T]$ for any $s<m$.  
Moreover, there is a constant $N=N(K,d,m,p,T,K_{\xi},K_{\eta})$ such that
\begin{equation}                                                                                                \label{estimatesup}
	\sup_{t\le T}|u(t)|_{V^s_p}^p
	\leq N \left(|\psi|_{V^s_p}^p+\int_0^T|f(t)|_{V^s_p}^p dt \right) \quad\text{for $s\in[0,m]$}.  
	\end{equation}
If Assumptions 
\ref{assumption L} through \ref{assumption free} hold with $m=0$, then there is at most one 
generalised solution.  
\end{theorem}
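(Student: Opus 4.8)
The plan is to prove existence by a vanishing-viscosity approximation combined with mollification of the data, relying on the crucial $L_p$ estimates of Section~\ref{section estimates}, whose constants are independent of the ellipticity of the equation (they depend only on $K,d,m,p,T,K_\xi,K_\eta$). For $\varepsilon\in(0,1]$ I would consider the non-degenerate equation obtained by replacing $\cA$ with $\cA_\varepsilon=\cA+\varepsilon\Delta$ and $(\psi,f)$ with smooth, compactly supported approximations $(\psi_\varepsilon,f_\varepsilon)$ converging to $(\psi,f)$ in $V^m_p\times L_p([0,T],V^m_p)$. Since the principal part $\cL+\varepsilon\Delta$ is uniformly parabolic with $x$-smooth coefficients while $\cM,\cN,\cR$ act as lower-order bounded perturbations (Proposition~\ref{proposition LMN} and Assumption~\ref{assumption R}), the classical $L_p$-theory of parabolic equations together with a continuity/fixed-point argument yields a solution $u_\varepsilon$ that is regular enough (e.g.\ $V^{m+2}_p$-valued for a.e.\ $t$) for the a priori estimates of Section~\ref{section estimates} to apply. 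As the coefficients of $\cA_\varepsilon$ satisfy Assumptions~\ref{assumption L}--\ref{assumption R} with a constant independent of $\varepsilon$, those estimates give \eqref{estimatesup} for $u_\varepsilon$ with $N$ independent of $\varepsilon$, and, through the equation, a uniform bound for $\partial_t u_\varepsilon$ in $L_p([0,T],W^{-1}_p)$.

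Next I would pass to the limit $\varepsilon\downarrow0$. Since $p\ge2$, bounded sequences in $L_p([0,T],V^m_p)$ and in $L_\infty([0,T],V^m_p)$ possess, respectively, weakly and weakly-$\ast$ convergent subsequences; along such a subsequence $u_\varepsilon\rightharpoonup u$, $\partial_t u_\varepsilon\rightharpoonup\partial_t u$ in $L_p([0,T],W^{-1}_p)$, and $\varepsilon\Delta u_\varepsilon\to0$ in the same space. Because $\cL,\cM,\cN,\cR$ (equivalently the operators $\cJ^k,\cJ^0,\cN,\cR$ occurring in $\langle\cA u,\varphi\rangle$) are bounded linear, hence weakly continuous, between the relevant Sobolev spaces, and the mollified data converge boundedly and almost everywhere to the originals, every term of the identity \eqref{solution} passes to the limit by weak-times-strong convergence in the $L_p$--$L_{p'}$ duality; one also checks $u\in C([0,T],L_p)$ (from $u\in L_p([0,T],W^1_p)$ and $\partial_t u\in L_p([0,T],W^{-1}_p)$) and $u(0)=\psi$, so that $u$ is a generalised solution. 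The uniform bound and $u\in C([0,T],L_p)$ then give, by the standard criterion for weak continuity of bounded $V^m_p$-valued functions that are continuous in $L_p$, that $u\in C_w([0,T],V^m_p)$, and lower semicontinuity of the norm upgrades the essential supremum in \eqref{estimatesup} to a genuine supremum. Strong continuity in $V^s_p$ for $s<m$ follows from the interpolation inequality $|\varphi|_{V^s_p}\le C\,|\varphi|_{L_p}^{1-s/m}|\varphi|_{V^m_p}^{s/m}$ applied to $u(t)-u(t')$, using the uniform $V^m_p$-bound and the strong $L_p$-continuity of $u$.

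For uniqueness it suffices to treat $m=0$. If $u^1,u^2$ are two generalised solutions with the same data, then $w=u^1-u^2\in C([0,T],L_p)\cap L_p([0,T],W^1_p)$ solves the homogeneous equation with $w(0)=0$. Since $w$ is only $W^1_p$-valued, \eqref{estimatesup} does not apply to it directly, so I would mollify in the space variable: $w^{(\delta)}:=w\ast\rho_\delta$ belongs to $C([0,T],W^n_p)$ for every $n$ and solves $\partial_t w^{(\delta)}=\cA w^{(\delta)}+r_\delta$ with $w^{(\delta)}(0)=0$ and commutator $r_\delta:=(\cA w)^{(\delta)}-\cA(w^{(\delta)})$. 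The crucial point is a Friedrichs-type commutator lemma showing $r_\delta\to0$ in $L_p([0,T],L_p)$: for the differential part $\cL$ this is classical, while for the nonlocal operators $\cM,\cN,\cR$, which involve compositions of the form $v\mapsto v(\,\cdot+\eta_{t,z}(\cdot))$, it rests on the smoothness and non-degeneracy of $\eta,\xi,\zeta$ (Assumptions~\ref{assumption M}--\ref{assumption R}) and on the change-of-variables and chain-rule lemmas of Section~\ref{section preliminaries}. Being regular, $w^{(\delta)}$ satisfies \eqref{estimatesup} with $s=0$, so that $\sup_{t\le T}|w^{(\delta)}(t)|_{L_p}^p\le N\int_0^T|r_\delta(t)|_{L_p}^p\,dt\to0$; since $w^{(\delta)}(t)\to w(t)$ in $L_p$ for every $t$, this forces $w\equiv0$.

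The underlying obstacle in both parts is the possible degeneracy of the equation, which prevents any gain of spatial regularity. For existence this is precisely what makes the $L_p$ estimates of Section~\ref{section estimates} delicate: they must be ellipticity-free and must be obtained at the natural regularity level by the $p$-th power energy method (testing with $|u|^{p-2}u$) when $p>2$, in the spirit of \cite{GGK}; I also expect some care to be needed in verifying that the vanishing-viscosity solutions $u_\varepsilon$ indeed lie in a space to which those estimates apply. For uniqueness the corresponding difficulty is the commutator analysis for the nonlocal operators $\cM,\cN,\cR$: proving $r_\delta\to0$ for composition operators $v\mapsto v(\,\cdot+\eta_{t,z}(\cdot))$, with estimates uniform enough in $z$ to be integrated against $\mu$ and $\nu$, is the step I anticipate will require the most work.
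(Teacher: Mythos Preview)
Your overall architecture (vanishing viscosity, a priori estimates, weak compactness, interpolation inequality for strong continuity) matches the paper's, but there is a genuine gap: the estimates of Section~\ref{section estimates} are proved \emph{only} for $p=2^k$ with integer $k\geq1$ and integer orders $n=0,1,\dots,m$ (see the statement of Theorem~\ref{theorem crucial} and the recursive identities \eqref{recursion I}, \eqref{recursion J}, which collapse precisely when $p$ is a dyadic power). You invoke ``the crucial $L_p$ estimates of Section~\ref{section estimates}'' as if they were available for all $p\geq2$ and all real $m\geq1$, but they are not, and this is not a technicality: a direct energy argument testing against $|u|^{p-2}u$ for arbitrary $p$ does not obviously close for the nonlocal terms $\cM,\cN$. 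The paper fills this gap by first establishing existence and the estimate \eqref{estimatesup} for $p\in\{2,4,8,\dots\}$ and integer $s\in\{0,1,\dots,m\}$, and then using complex interpolation (properties (i)--(iii) of Section~\ref{section mainproof}) to reach arbitrary $p\geq2$, followed by real interpolation (properties (a)--(c)) to reach non-integer $s$ and the Slobodeckij scale $W^s_p$. Without this interpolation step your existence proof covers only dyadic $p$.

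Your uniqueness argument is also different from the paper's and inherits the same gap. The paper does \emph{not} use a Friedrichs commutator. For $p=2^k$ it argues directly: by Proposition~\ref{proposition Qform} the one-sided inequalities $Q^{(i)}(v)\leq N|v|_{L_p}^p$ extend from $C_0^\infty$ to all $v\in W^1_p$, so the $L_p$-formula \eqref{formula1} applied to $w=u^1-u^2$ gives $|w(t)|_{L_p}^p\leq N\int_0^t|w|_{L_p}^p$ and Gronwall finishes. For general $p\geq2$ the paper bootstraps: if uniqueness holds at exponent $q$, then for $p\in[q,q+d^{-1}]$ one localises $w$ by $\chi_R(x)=\chi(x/R)$, checks via Lemma~\ref{lemma cutting} that $w\chi_R$ solves the equation in $\bW^1_q$ with a free term of size $O(R^{r-1})$, $r=d(p-q)/(pq)<1$, and lets $R\to\infty$ (Proposition~\ref{proposition uniqueness}). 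Your commutator route would, at best, reduce uniqueness to the estimate \eqref{estimatesup} for the mollified $w^{(\delta)}$; but \eqref{estimatesup} is proved only for the \emph{constructed} solution, so invoking it for $w^{(\delta)}$ tacitly uses uniqueness, and deriving it directly for $w^{(\delta)}$ again runs into the $p=2^k$ restriction of Section~\ref{section estimates}. In short, the interpolation step for existence and the localisation bootstrap for uniqueness are the two ideas your sketch is missing.
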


\mysection{preliminaries}                                                                          \label{section preliminaries}

First we present some lemmas which are probably well-known 
from textbooks in analysis. 
Recall that we use 
multi-numbers 
$\alpha=\alpha_1\dots\alpha_n$, where  $\alpha_j\in\{1,\dots, d\}$, 
to denote higher order derivatives.  
For a multi-number $\alpha=\alpha_1....\alpha_k$ 
of length $k$ and a subset $\kappa$ of $\bar k:=\{1,2,...,k\}$ 
we use the notation $\alpha(\kappa)$ for 
the multi-number $\alpha_{l_1}...\alpha_{l_n}$, 
where $l_1$,...,$l_n$ are the elements 
of $\kappa$, listed in increasing order. 
For short we use the 
notation $v_{\alpha}:=D_{\alpha}v$ for functions $v$ of $x\in\bR^d$. 
We write $\kappa_1\sqcup\dots\sqcup\kappa_n=\bar k$ 
for  the partition of $\bar k:=\{1,2,..,k\}$ 
into $n$ nonempty disjoint sets 
$\kappa_1$,...,$\kappa_n$. Two partitions are considered different 
if one of the sets  in one of the partitions 
is different from each set in the other partition. 
Using the above notation the chain rule for 
$(u(\rho))_{\alpha}:=D_{\alpha}(u(\rho))$ for functions $u:\bR^d\to\bR$ 
and $\rho:\bR^d\to\bR^d$ 
can be formulated as follows. 

\begin{lemma}                                                                    \label{lemma chainrule} 
Assume that the derivatives of $u$ and $\rho=(\rho^1,...\rho^d)$ up to order $k\geq1$ 
exist and are continuous functions. 
Then for any multi-number 
$\alpha=\alpha_1\alpha_2...\alpha_l$ of length $l\in\{1,2,...,k\}$
we have 
\begin{equation}                                                                                        \label{chainrule}
(u(\rho))_\alpha=\sum_{n=1}^l\sum_{\kappa_1\sqcup\dots\sqcup\kappa_n
=\bar l}u_{i_1\dots i_n}(\rho)\rho_{\alpha(\kappa_1)}^{i_1}
\rho_{\alpha(\kappa_2)}^{i_2}\dots \rho_{\alpha(\kappa_n)}^{i_n},
\end{equation}
where the second summation on the right-hand side 
means summation over the different partitions 
of $\bar l:=\{1,2,...,l\}$, and for each $l$ and each partition of $\bar l$ there is also a summation 
with respect to the repeated indices $i_j\in\{1,2,...,d\}$ for 
$j=1,2,...,n$. 
\end{lemma}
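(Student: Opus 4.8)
The plan is to prove \eqref{chainrule} by induction on the length $l$ of the multi-number $\alpha$, with $l$ ranging over $\{1,2,\dots,k\}$. For the base case $l=1$ the only admissible value is $n=1$ and the unique partition of $\bar 1=\{1\}$ is $\kappa_1=\{1\}$, so the right-hand side reduces to $u_i(\rho)\,\rho^i_{\alpha_1}$ (summation over $i$), which is exactly $D_{\alpha_1}(u(\rho))$ by the ordinary chain rule for a composition of $C^1$ maps. Throughout I would keep in mind that, since $u$ and $\rho$ are $C^k$, all mixed partial derivatives of order at most $k$ commute by Schwarz's theorem; in particular each $u_{i_1\dots i_n}$ is symmetric in the indices $i_1,\dots,i_n$, so that a typical summand on the right-hand side depends only on the unordered partition $\{\kappa_1,\dots,\kappa_n\}$ together with the (summed) index assignment, and not on the order in which the blocks are listed. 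This is precisely what makes the sum over ``different partitions'' well defined.

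For the inductive step I would suppose \eqref{chainrule} holds for all multi-numbers of length $l<k$ and take $\alpha=\alpha_1\cdots\alpha_{l+1}$. Writing $\alpha'=\alpha_1\cdots\alpha_l$, I would compute $(u(\rho))_\alpha=D_{\alpha_{l+1}}\bigl((u(\rho))_{\alpha'}\bigr)$ by applying $D_{\alpha_{l+1}}$ to the induction hypothesis and using the product rule on each summand $u_{i_1\dots i_n}(\rho)\prod_{j=1}^n\rho^{i_j}_{\alpha'(\kappa_j)}$. Differentiating the scalar factor $u_{i_1\dots i_n}(\rho)$ produces, by the chain rule, the factor $u_{i_1\dots i_n i_{n+1}}(\rho)\,\rho^{i_{n+1}}_{\alpha_{l+1}}$ with a fresh summed index $i_{n+1}$, while differentiating the factor $\rho^{i_j}_{\alpha'(\kappa_j)}$ for a fixed $j$ produces $\rho^{i_j}_{\alpha'(\kappa_j)\alpha_{l+1}}$. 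Since $l+1$ exceeds every element of $\kappa_j\subseteq\bar l$, appending it gives $\alpha(\kappa_j\cup\{l+1\})=\alpha(\kappa_j)\alpha_{l+1}$, and after interchanging the order of differentiation (legitimate because $\rho$ is $C^{l+1}$) this last factor equals $\rho^{i_j}_{\alpha(\kappa_j\cup\{l+1\})}$; here $\alpha$ is now the full length-$(l+1)$ multi-number, and $\alpha'(\kappa_j)=\alpha(\kappa_j)$ because the first $l$ entries of $\alpha'$ and $\alpha$ agree.

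The heart of the argument, and the step I expect to require the most care, is the combinatorial reorganisation showing that the resulting terms are exactly the right-hand side of \eqref{chainrule} with $l$ replaced by $l+1$. The key fact is the standard recursion for set partitions: every partition of $\bar{l+1}=\{1,\dots,l+1\}$ arises in exactly one of two mutually exclusive ways from a partition of $\bar l$, namely either by adjoining $\{l+1\}$ as a new singleton block, or by inserting the element $l+1$ into one of the existing blocks. The first alternative is matched precisely by the terms coming from differentiating the scalar factor $u_{i_1\dots i_n}(\rho)$, where the new singleton block $\{l+1\}$ carries the fresh index $i_{n+1}$ and the factor $\rho^{i_{n+1}}_{\alpha_{l+1}}=\rho^{i_{n+1}}_{\alpha(\{l+1\})}$; the second alternative is matched by the $n$ terms coming from differentiating the factors $\rho^{i_j}_{\alpha'(\kappa_j)}$, $j=1,\dots,n$. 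Because this correspondence is a bijection, each partition of $\bar{l+1}$ is produced exactly once and with coefficient one, which is exactly the asserted identity at length $l+1$. Finally I would remark that all derivatives occurring are of order at most $k$, so the hypotheses guarantee that each step — the chain rule, the product rule, and the interchange of the order of differentiation — is justified, completing the induction.
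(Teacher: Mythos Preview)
Your proof is correct and follows exactly the approach indicated in the paper, which simply states that the lemma ``can be proved by induction on $l$, and it is left for the reader as an easy exercise.'' You have carefully supplied the details of that induction, including the combinatorial bijection between partitions of $\overline{l+1}$ and the terms arising from the product rule, and your use of Schwarz's theorem to justify the well-definedness of the sum over unordered partitions is appropriate.
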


\begin{proof}
One can prove this lemma by induction on $l$, and it is left for the reader as an easy exercise. 
\end{proof} 

A one-to-one function, mapping $\bR^d$ onto $\bR^d$, 
is called a $C^k(\bR^d)$-diffeomorphism 
on $\bR^d$ for an integer $k\geq1$, if the derivatives up to order $k$ of the function 
and its inverse are continuous. If $\rho$ is a 
$C^k(\bR^d)$ diffeomorphism such that 
\begin{equation}                                                                    \label{diffeo korlat}                                                      
M\leq |\det(D\rho)| 
\,\,
{\text{and\,\,  $|D^i\rho|\leq N$ for $i=1,2,...,k$}} .                                                      
\end{equation}
for some positive constants $M$ and $N$, then Lemma \ref{lemma chainrule} 
can be extended to $u\in W^k_p$ for any $p\in[1,\infty)$. 

\begin{lemma}                                             \label{lemma chainrule2}
Let $\rho$ be a $C^k(\bR^d)$-diffeomorphism for some $k\geq1$ 
such that \eqref{diffeo korlat} holds. 
Then the following statements hold. 
\begin{enumerate}[(i)]
\item There is a constant $C=C(M,N,d,p,k)$ such that for 
$u\in W^l_p$, $p\in[1,\infty]$ and $v\in W^l_{\infty}$  
\begin{equation}                                                    \label{rule}                     
|u(\rho)v|_{W^l_p}\leq C|u|_{W^l_p}|v|_{W^l_\infty}
\end{equation}
for $l=0,1,2,...,k$. 
\item For $1\leq |\alpha|\leq k$ equation \eqref{chainrule} holds $dx$-almost 
everywhere for any $u\in W^k_p$, $p\in[1,\infty]$.  
\end{enumerate}
\end{lemma}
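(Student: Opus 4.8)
The plan is to establish part (i) first for smooth data, then bootstrap to general $u\in W^l_p$, $v\in W^l_\infty$ by density, and finally to deduce (ii) from (i) by a parallel approximation. Throughout, the only property of $\rho$ that does real work is the change-of-variables bound: since $y=\rho(x)$ and $|\det(D\rho^{-1}(y))|=|\det(D\rho)(\rho^{-1}(y))|^{-1}\le M^{-1}$ by \eqref{diffeo korlat}, one has $\int_{\bR^d}g(\rho(x))\,dx\le M^{-1}\int_{\bR^d}g(y)\,dy$ for every Borel $g\ge0$.

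First I would take $u\in C_0^\infty$, $v\in C_b^\infty$ and fix a multi-number $\alpha$ with $|\alpha|=l'\le l\le k$. Expanding $D_\alpha(u(\rho)v)$ by the Leibniz rule writes it as a finite sum of products $D_{\alpha(\kappa)}(u(\rho))\,D_{\alpha(\kappa^c)}v$ over $\kappa\subseteq\{1,\dots,l'\}$, and applying Lemma \ref{lemma chainrule} to $D_{\alpha(\kappa)}(u(\rho))$ turns each product into a finite sum (the number of terms depending only on $d$ and $k$) of expressions $u_{i_1\dots i_n}(\rho)\,\rho^{i_1}_{\alpha(\kappa_1)}\cdots\rho^{i_n}_{\alpha(\kappa_n)}\,D_{\alpha(\kappa^c)}v$, where $n\le|\kappa|$, every $\rho$-factor is a derivative of $\rho$ of order between $1$ and $k$ hence bounded by $N$, and $|D_{\alpha(\kappa^c)}v|\le|v|_{W^l_\infty}$. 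So the $L_p$-norm of each term is at most a constant $C(N,d,k)$ times $|v|_{W^l_\infty}\,|u_{i_1\dots i_n}(\rho)|_{L_p}$, and the change-of-variables bound applied to $g=|D^nu|^p$ gives $|u_{i_1\dots i_n}(\rho)|_{L_p}\le M^{-1/p}|D^nu|_{L_p}\le M^{-1/p}|u|_{W^l_p}$. Summing over the finitely many terms and over $|\alpha|\le l$ yields \eqref{rule} for smooth data, with $C=C(M,N,d,p,k)$; for $p=\infty$ the same computation works with $L_\infty$-norms and no Jacobian factor.

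Next I would pass to general $u,v$. Applying the change-of-variables bound to $g=\mathbf{1}_E$ shows $|\rho^{-1}(E)|\le M^{-1}|E|$, so $\rho$ pulls Lebesgue-null sets back to null sets. Choosing $u_j\in C_0^\infty$ with $u_j\to u$ in $W^l_p$ and $v_j\in C_b^\infty$ with $v_j\to v$ in $W^l_\infty$ (each $u_j\circ\rho$ has compact support, so no integrability issue arises), the smooth estimate applied to the differences shows $(u_j(\rho)v_j)$ is Cauchy in $W^l_p$; its limit is $u(\rho)v$ because, along a subsequence, $u_j\to u$ and $v_j\to v$ a.e., hence by the null-set property $u_j(\rho)v_j\to u(\rho)v$ a.e. Letting $j\to\infty$ in the estimate for $(u_j,v_j)$ proves \eqref{rule} in general (with uniform convergence replacing a.e.\ convergence when $p=\infty$). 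For part (ii) I would fix $\alpha$ with $1\le|\alpha|\le k$, $u\in W^k_p$, take $u_j\in C_0^\infty$ with $u_j\to u$ in $W^k_p$, and write \eqref{chainrule} for each $u_j$: by part (i) with $v\equiv1$ the left-hand sides $D_\alpha(u_j(\rho))$ converge in $L_p$ to $D_\alpha(u(\rho))$, while on the right-hand side the $\rho$-factors are bounded and $|(u_j-u)_{i_1\dots i_n}(\rho)|_{L_p}\le M^{-1/p}|D^n(u_j-u)|_{L_p}\to0$ since $n\le|\alpha|\le k$, so the right-hand sides converge in $L_p$ to the right-hand side of \eqref{chainrule} for $u$. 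Hence \eqref{chainrule} holds in $L_p$, i.e.\ $dx$-almost everywhere.

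The one genuinely delicate point—the main obstacle—is controlling composition with $\rho$ under limits: one must know that $W^l_p$-convergence of $u_j$ forces $L_p$-convergence of $u_j\circ\rho$ and that $\rho$ does not spoil almost-everywhere identities. Both follow from the single elementary estimate $\int_{\bR^d}g(\rho)\,dx\le M^{-1}\int_{\bR^d}g\,dx$, which is precisely where the non-degeneracy condition $|\det(D\rho)|\ge M$ of \eqref{diffeo korlat} is used; everything else is bookkeeping with the chain and Leibniz rules and the sup-bounds on the derivatives of $\rho$.
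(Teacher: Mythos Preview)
Your proposal is correct and uses the same ingredients as the paper's proof: the change-of-variables bound coming from $|\det D\rho|\ge M$, the sup-bounds $|D^i\rho|\le N$, the Leibniz and chain rules, and density of smooth functions. The only organisational difference is that the paper handles the smooth case by induction on $l$ (writing $D_i(u(\rho)v)=u_j(\rho)\rho^j_i v+u(\rho)v_i$ and invoking the $W^{l-1}_p$ estimate) rather than expanding $D_\alpha(u(\rho)v)$ in one shot via Lemma~\ref{lemma chainrule} as you do; this is a cosmetic difference. One small point to fix: for $p=\infty$ you must approximate $u$ by $C_b^\infty$ functions rather than $C_0^\infty$, since $C_0^\infty$ is not dense in $W^l_\infty$; the paper makes this distinction explicitly.
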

\begin{proof}
We prove  \eqref{rule} by induction on $l$, assuming  
that $u\in W^k_p$, $v\in W^k_{\infty}$ 
are smooth functions and $p\neq\infty$.  
For $l=0$ by the change of 
variable $\rho(x)=y$ and by the first inequality in \eqref{diffeo korlat} we have 
$$
|u(\rho)v|_{L_p}^p\leq\esssup |v|^p\int_{\bR^d}|u(y)|^p|{\rm{det}}D\rho^{-1}(y)|\,dy
$$
$$
=\esssup|v|^p\int_{\bR^d}|u(y)|^p|{\rm{det}}D\rho(\rho^{-1}(y))|^{-1}\,dy
\leq M^{-1}|u|_{L_p}^p\esssup|v|^p, 
$$
which proves \eqref{rule} for $l=0$. Let $l\geq1$ and assume that 
statement (i) is true for $l-1$ in place of $l$. By the Leibniz rule and the 
chain rule 
$$
D_i(u(\rho)v)=u_j(\rho)\rho^j_iv+u(\rho)v_i\quad\text{for each $i=1,2,...,d$}.
$$
Hence by the induction hypothesis and the second inequality in  \eqref{diffeo korlat} 
we have 
$$
|D_i(u(\rho)v)|_{W^{l-1}_p}\leq |u_j(\rho)\rho^j_iv|_{W^{l-1}_p}
+|u(\rho)v_i|_{W^{l-1}_p}
$$
$$
\leq C|u_j|_{W^{l-1}_p}|\rho^j_iv|_{W^{l-1}_{\infty}}
+C|u|_{W^{l-1}_p}|v_i|_{W^{l-1}_{\infty}}\leq C(Nd+1)|u|_{W^l_p}|v|_{W^l_{\infty}}. 
$$
Thus 
$$
|u(\rho)v|_{W^{l}_p}=\sum_{i=1}^d|D_i(u(\rho)v)|_{W^{l-1}_p}
\leq Cd(Nd+1)|u|_{W^l_p}|v|_{W^l_{\infty}}, 
$$
which finishes the induction proof. When $p=\infty$ and $l=0$ then 
\eqref{rule} is obvious, and by induction on $l$ we get the result as before. 
Clearly, the condition given by the first inequality in \eqref{diffeo korlat}  
is not needed in this case. 
Since $C_0^{\infty}$ is dense in $W^l_p$ when $p\neq\infty$ and 
$C_b^{\infty}$ is dense in $W^l_p$, we can finish the proof of (ii) by a standard 
approximation argument. Making use of (ii) we can get (i) also by approximating 
$u$ by $C_0^{\infty}$ functions when $p\neq\infty$ and by $C^{\infty}_b$ 
functions when $p=\infty$. 
\end{proof}

 \begin{lemma}                                                                      \label{lemma diff}
Let $\rho$ be 
a $C^k(\bR^d)$-diffeomorphism for $k\geq1$, such that 
\eqref{diffeo korlat} holds.
Then
there are positive constants $M'=M'(N,d)$ and $N'=N'(N,M,d,k)$ 
such that \eqref{diffeo korlat} holds 
with $g:=\rho^{-1}$, the inverse of $\rho$, in place of $\rho$, 
with $M'$ and $N'$ in place of $M$ 
and $N$, respectively.
\end{lemma}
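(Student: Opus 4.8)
The plan is to differentiate the identity $\rho(g(y))=y$, $y\in\bR^d$, and solve recursively for the derivatives of $g:=\rho^{-1}$ in terms of those of $\rho$; note that $g\in C^k$ by the very definition of a $C^k$-diffeomorphism, so all the derivatives involved exist and are continuous, and only the bounds are at issue. First I would record the first-order information: by the inverse function theorem $Dg(y)=\big(D\rho(g(y))\big)^{-1}$, hence $\det Dg(y)=\big(\det D\rho(g(y))\big)^{-1}$ and $Dg(y)=\operatorname{adj}\big(D\rho(g(y))\big)/\det D\rho(g(y))$, with $\operatorname{adj}$ the adjugate matrix. Since $|D\rho|\leq N$, every entry of $D\rho$ is bounded by $N$, so $|\det D\rho|\leq C_1(N,d)$ and every entry of $\operatorname{adj}(D\rho)$, being a signed $(d-1)\times(d-1)$ minor, is bounded by $C_2(N,d)$. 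Combining these with $|\det D\rho|\geq M$ from \eqref{diffeo korlat} gives
$$
|\det Dg|\geq C_1(N,d)^{-1}=:M'(N,d),\qquad |Dg|\leq dC_2(N,d)/M=:N_1(N,M,d),
$$
which already settles the claim for the Jacobian determinant and for the first derivative, with $M'$ depending only on $N,d$.

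For the higher derivatives I would argue by induction on the order. Fix a multi-number $\alpha$ of length $i$ with $2\leq i\leq k$ and apply $D_\alpha$ to $\rho^s(g(y))=y^s$. The right-hand side vanishes since $|\alpha|\geq2$, and by Lemma \ref{lemma chainrule} the left-hand side expands into finitely many terms $\rho^s_{j_1\cdots j_n}(g)\,g^{j_1}_{\alpha(\kappa_1)}\cdots g^{j_n}_{\alpha(\kappa_n)}$, $1\leq n\leq i$, indexed by partitions $\kappa_1\sqcup\cdots\sqcup\kappa_n=\bar i$. The only term carrying a derivative of $g$ of order $i$ is the one with $n=1$, $\kappa_1=\bar i$, namely $\rho^s_j(g)\,g^j_\alpha$; in every other term each block has $|\kappa_l|\leq i-1$. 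Multiplying the resulting identity by $(D\rho(g))^{-1}=Dg$ isolates
$$
g^r_\alpha=-[Dg]^r_s\sum_{n=2}^{i}\ \sum_{\kappa_1\sqcup\cdots\sqcup\kappa_n=\bar i}\rho^s_{j_1\cdots j_n}(g)\,g^{j_1}_{\alpha(\kappa_1)}\cdots g^{j_n}_{\alpha(\kappa_n)}.
$$
Now $|Dg|\leq N_1$ from the first step, $|D^n\rho|\leq N$ for $1\leq n\leq k$ by \eqref{diffeo korlat}, and every factor $g^{j_l}_{\alpha(\kappa_l)}$ is a derivative of $g$ of order $\leq i-1$, hence controlled by the induction hypothesis; since the number of partitions depends only on $i$ and $d$, this yields $|D^i g|\leq N'_i(N,M,d,i)$. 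Taking $N':=\max(N_1,N'_2,\dots,N'_k)$ finishes the proof, with the claimed dependence of the constants.

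The main thing to be careful about is the bookkeeping in the chain-rule expansion: one must verify that the order-$i$ derivative of $g$ enters only through the single term $\rho^s_j(g)\,g^j_\alpha$, so that the linear system for $D^i g$ has coefficient matrix $D\rho(g)$, which is invertible with inverse $Dg$ already bounded by $N_1$. Once this is seen, the rest is a routine product estimate, and the separate, purely algebraic bound $|\det D\rho|\leq C_1(N,d)$ (needed because \eqref{diffeo korlat} only bounds $|\det D\rho|$ from below) is immediate from $|D\rho|\leq N$.
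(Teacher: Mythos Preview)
Your proof is correct and follows the same overall strategy as the paper: bound $|\det Dg|$ below via $|\det D\rho|\leq C(N,d)$, bound $|Dg|$ via the inverse-matrix formula together with $|\det D\rho|\geq M$, and then obtain the higher derivatives by induction, differentiating the identity $\rho(g)=\mathrm{id}$ and solving for the top-order derivative of $g$.

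The differences are in implementation. For $|Dg|$ you use the adjugate/Cramer formula $Dg=\operatorname{adj}(D\rho(g))/\det D\rho(g)$, whereas the paper uses a singular-value argument (the singular values of $Dg$ are the reciprocals of those of $D\rho(g)$, so $\|Dg\|=\lambda_1\leq N^{d-1}\prod_i\lambda_i\leq N^{d-1}/M$); both yield bounds of the form $C(N,d)/M$, and your route is arguably more elementary. For the induction on higher derivatives you apply Lemma \ref{lemma chainrule} directly to $\rho^s(g(y))=y^s$ and isolate the $n=1$ term, while the paper differentiates the matrix equation $A\,Dg=I$ with $A=D\rho(g)$ to get $D_\alpha Dg=-Dg\,(D_\alpha A)\,Dg$ and then argues that each entry of $D_\alpha Dg$ is a polynomial in $\{\rho^j_\beta(g),\,g^r_\gamma:|\gamma|<k\}$. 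These are equivalent reorganisations of the same Fa\`a di Bruno bookkeeping; your version has the advantage of invoking Lemma \ref{lemma chainrule} explicitly, which the paper has already stated.
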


\begin{proof}
It follows from the second estimate in \eqref{diffeo korlat} 
that $|\det(D\rho)|\leq d!N^d$, 
and since $Dg(x)=(D\rho)^{-1}(g(x))$, we have 
$$
|\det Dg(x)|=|\det (D\rho)(g(x))|^{-1}\geq (d!N^d)^{-1}, 
$$
which proves the first estimate in \eqref{diffeo korlat} 
for $g=\rho^{-1}$ in place of $\rho$. 
To estimate $|Dg|$ notice that $\|Dg(x)\|=\lambda_{1}$,  
where $\|Dg(x)\|$ is the operator norm of the matrix $Dg(x)$, and 
$\lambda_1\geq\lambda_2\geq...\geq\lambda_d>0$ 
are the singular values of the matrix $Dg(x)$. 
Since $1/\lambda_d\geq1/\lambda_{d-1}\geq...\geq1/\lambda_{1}$ are 
the singular values of  $A(x):=(D\rho)(g(x))$, 
we have $|\det A(x)|=1/\Pi_{i=1}^d\lambda_i\geq M$ 
and $\|A(x)\|=1/\lambda_{d}\leq N$.  
Hence
\begin{equation}                                                                                \label{D}      
|D\rho^{-1}(x)|\leq K_0\|D\rho^{-1}(x)\|
=K_0\lambda_{1}\leq K_0(N\lambda_{d})^{d-1}\lambda_{1}
\leq K_0N^{d-1}\prod_{i=1}^d\lambda_i
\leq\frac{K_0N^{d-1}}{M}   
\end{equation}
with a constant $K_0=K_0(d)$. 
To estimate $|D^ig|$ for $1\leq i\leq k$ and $k>1$,  
we claim that for every multi-number $\alpha$ of length $i<k$ 
 each entry $B^{rl}(\alpha)$ of the matrix $B({\alpha}):=D_{\alpha}Dg$
 is a linear 
combination of products of at most $k+2$ functions, 
with multiplicity, taken from the set 
$$
\{\rho^j_{\beta}(g), \,g^r_{\gamma}: j,r=1,2,..,d,\,1\leq|\beta|\leq k,\, 1\leq|\gamma|<k\}
$$
with integer coefficients, determined by $\alpha$ and $d$, where $v_{\beta}:=D_{\beta}v$ 
for functions $v$ and multi-numbers $\beta$.  
By the chain rule from $\rho(g(x))=x$ we have $ADg=I$ with $A=(D\rho)(g)$. 
Hence, for $|\alpha|=1$
$$
D_{\alpha}Dg=-A^{-1}D_{\alpha}ADg=-DgD_{\alpha}ADg=:B(\alpha).  
$$
This gives 
$B^{rl}(\alpha)=-g^r_j\rho^j_{pi}(g)g^i_{\alpha}g^p_l$ for $r,l=1,2,..,d$,  
which proves the claim for $k=2$, and our claim follows by induction 
on $k$. Hence also by induction on $k$ we immediately obtain that 
 \begin{equation*}                                                   
 |D^ig|\leq N' \quad\text{for $1\leq i\leq k$ with a constant $N'=N'(N,M,d,k)$}, 
 \end{equation*}
since we have already proved this statement for $k=1$ above. 
\end{proof}
\smallskip
In Section \ref{section mainproof} we will approximate equation \eqref{eq1} 
by mollifying the data 
$\psi$ and $f$, the coefficients of $\cL$ and the functions  $\eta$ and $\xi$ 
in the variable $x\in\bR^d$. It is easy to see that the mollifications of the data 
and the coefficients of $\cL$ by a nonnegative $C_0^{\infty}$ kernel of unit integral 
satisfy Assumptions \ref{assumption free} and \ref{assumption L}. 
It is less clear, however, 
that mollifications of $\eta$ and $\xi$ satisfy Assumptions \ref{assumption M} 
and \ref{assumption N}. 
We clarify this by the help of some lemmas below. 
In the rest of the paper for $\varepsilon>0$ and 
locally integrable functions $v$ defined on $\bR^d$ we use the notation 
$v^{(\varepsilon)}$ for the mollification of $v$, defined by 
\begin{equation}                                                            \label{smoothing}
v^{(\varepsilon)}(x)
=S_{\varepsilon}v(x)
:=\varepsilon^{-d}\int_{\bR^d}v(y)k((x-y)/\varepsilon)\,dy,\quad x\in\bR^d,  
\end{equation}
where $k=k(x)$ is a fixed nonnegative smooth function on $\bR^d$ 
such that $k(x)=0$ for $|x|\geq1$, $k(-x)=k(x)$ 
for $x\in\bR^d$, and $\int_{\bR^d}k(x)\,dx=1$.

\begin{lemma}                                                                      \label{lemma epsilon}
Let $\rho$ be 
a $C^k(\bR^d)$-diffeomorphism for $k\geq2$, such that 
\eqref{diffeo korlat} holds.   
Then there is a positive constant $\varepsilon_0=\varepsilon_0(M,N,d,k)$ 
such that $\rho^{(\varepsilon)}$ is a $C^{\infty}(\bR^d)$-diffeomorphism 
for every $\varepsilon\in(0,\varepsilon_0)$,  
and \eqref{diffeo korlat} remains valid for $\rho^{(\varepsilon)}$ 
in place of $\rho$, with 
$M{''}=M/2$ in place of $M$. 
\end{lemma}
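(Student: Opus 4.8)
The plan is to control the mollification $\rho^{(\varepsilon)}$ of $\rho$ in $C^1$-norm and, more delicately, to bound $|\det(D\rho^{(\varepsilon)})|$ away from zero uniformly in $\varepsilon$. First I would record the standard facts about $S_\varepsilon$: since $k$ is a nonnegative kernel of unit integral supported in the unit ball, for any $i$ we have $D^i(\rho^{(\varepsilon)}) = (D^i\rho)^{(\varepsilon)} = S_\varepsilon(D^i\rho)$, so the bound $|D^i\rho|\le N$ for $1\le i\le k$ immediately yields $|D^i(\rho^{(\varepsilon)})|\le N$ for $1\le i\le k$; in particular the second inequality in \eqref{diffeo korlat} holds for $\rho^{(\varepsilon)}$ with the \emph{same} constant $N$, for \emph{every} $\varepsilon>0$. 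Also $\rho^{(\varepsilon)}\in C^\infty$ automatically, and $\rho^{(\varepsilon)}\to\rho$, $D\rho^{(\varepsilon)}\to D\rho$ locally uniformly as $\varepsilon\downarrow0$.

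The main obstacle is the lower bound on $|\det(D\rho^{(\varepsilon)})|$: the determinant is a nonlinear function of the entries, so $|\det(D\rho)|\ge M$ does \emph{not} pass through the (linear) mollification directly. The key observation is that $D\rho$ is uniformly continuous with a modulus of continuity controlled by $|D^2\rho|\le N$ (here $k\ge2$ is used): $\|D\rho(x)-D\rho(y)\|\le c_1 N|x-y|$ for a dimensional constant $c_1$. Hence $\|D\rho^{(\varepsilon)}(x)-D\rho(x)\| = \|S_\varepsilon(D\rho)(x)-D\rho(x)\|\le \varepsilon^{-d}\int_{|x-y|\le\varepsilon}\|D\rho(y)-D\rho(x)\|k((x-y)/\varepsilon)\,dy\le c_1 N\varepsilon$. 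Since the determinant is Lipschitz on bounded subsets of $d\times d$ matrices and $\|D\rho(x)\|\le c_2 N$ with $\|D\rho^{(\varepsilon)}(x)\|\le c_2 N$ (from the $i=1$ bound above), there is a constant $c_3=c_3(N,d)$ with $|\det D\rho^{(\varepsilon)}(x) - \det D\rho(x)|\le c_3 N\varepsilon$ for all $x$. Choosing $\varepsilon_0=\varepsilon_0(M,N,d)$ so small that $c_3 N\varepsilon_0\le M/2$ gives $|\det D\rho^{(\varepsilon)}(x)|\ge M - c_3N\varepsilon \ge M/2 = M''$ for all $x$ and all $\varepsilon\in(0,\varepsilon_0)$, which is the first inequality in \eqref{diffeo korlat} for $\rho^{(\varepsilon)}$.

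It remains to check that $\rho^{(\varepsilon)}$ is genuinely a $C^\infty(\bR^d)$-diffeomorphism of $\bR^d$, i.e. a smooth bijection of $\bR^d$ onto itself. Local invertibility at each point follows from the inverse function theorem since $\det D\rho^{(\varepsilon)}$ never vanishes. For global injectivity and surjectivity I would use a Hadamard-type global inverse function theorem: a $C^1$ map $F:\bR^d\to\bR^d$ that is a local diffeomorphism and \emph{proper} (preimages of compact sets are compact) is a diffeomorphism onto $\bR^d$. Properness of $\rho^{(\varepsilon)}$ follows from $|\rho^{(\varepsilon)}(x)-\rho^{(\varepsilon)}(y)|\ge c_4|x-y|$ for a constant $c_4=c_4(M,N,d)>0$ on the scale $\varepsilon<\varepsilon_0$: indeed writing $\rho^{(\varepsilon)}(x)-\rho^{(\varepsilon)}(y)=\int_0^1 D\rho^{(\varepsilon)}(y+s(x-y))(x-y)\,ds$ and using that $\|(D\rho^{(\varepsilon)})^{-1}\|$ is uniformly bounded (from $|\det D\rho^{(\varepsilon)}|\ge M/2$ and $\|D\rho^{(\varepsilon)}\|\le c_2N$, via the cofactor formula, exactly as in the proof of Lemma \ref{lemma diff}), one first gets this lower bound for $x,y$ close, then a connectedness/continuation argument extends it; alternatively one may simply invoke the Hadamard global inverse theorem, whose hypothesis $\int_0^\infty \inf_{|x|=r}\|(D\rho^{(\varepsilon)}(x))^{-1}\|^{-1}\,dr=\infty$ holds since that infimum is bounded below by a positive constant. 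This yields that $\rho^{(\varepsilon)}$ is a $C^\infty$-diffeomorphism of $\bR^d$ for $\varepsilon\in(0,\varepsilon_0)$, completing the proof. $\hfill\Box$
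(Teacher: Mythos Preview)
Your argument for the derivative bounds and for the lower bound on $|\det D\rho^{(\varepsilon)}|$ is exactly the paper's: use $k\ge2$ to get a uniform Lipschitz estimate on $D\rho$, hence $\|D\rho^{(\varepsilon)}-D\rho\|\le CN\varepsilon$, then the local Lipschitz property of $\det$ on bounded matrices gives $|\det D\rho^{(\varepsilon)}-\det D\rho|\le C'\varepsilon$, so $|\det D\rho^{(\varepsilon)}|\ge M/2$ for small $\varepsilon$.

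Where your write-up diverges from the paper is the global injectivity step, and your \emph{first} attempt there has a genuine gap. From $\rho^{(\varepsilon)}(x)-\rho^{(\varepsilon)}(y)=\int_0^1 D\rho^{(\varepsilon)}(y+s(x-y))(x-y)\,ds$ and a uniform bound on $\|(D\rho^{(\varepsilon)})^{-1}\|$ you cannot deduce $|\rho^{(\varepsilon)}(x)-\rho^{(\varepsilon)}(y)|\ge c_4|x-y|$ for all $x,y$: the average of invertible matrices along the segment need not be invertible, and no ``connectedness/continuation argument'' fixes this --- that would prove that every local diffeomorphism of $\bR^d$ with uniformly bounded $(DF)^{-1}$ is globally bi-Lipschitz, which is precisely the content of the Hadamard theorem and not something one gets for free. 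Your \emph{alternative}, invoking the Hadamard--Plastock criterion $\int_0^\infty \inf_{|x|=r}\|(D\rho^{(\varepsilon)}(x))^{-1}\|^{-1}\,dr=\infty$ (which holds since that infimum is bounded below by a fixed positive constant), is correct and suffices.

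The paper establishes properness differently and more directly: it uses that $\rho$ itself is a diffeomorphism with $|D\rho^{-1}|\le N'$ (Lemma~\ref{lemma diff}), so $|x-y|\le N'|\rho(x)-\rho(y)|$, and then estimates the second difference $|\rho(x)-\rho^{(\varepsilon)}(x)-\rho(y)+\rho^{(\varepsilon)}(y)|\le \varepsilon N|x-y|$ via the bound on $D^2\rho$. This yields the global bi-Lipschitz bound $|x-y|\le 2N'|\rho^{(\varepsilon)}(x)-\rho^{(\varepsilon)}(y)|$ for $\varepsilon$ small, hence properness, and then Gordon's theorem (equivalent to Hadamard's) finishes. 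The advantage of the paper's route is that it produces the explicit bi-Lipschitz inequality by comparison with $\rho$, avoiding the need to cite the integral form of Hadamard's criterion; your route is shorter once one grants that criterion.
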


\begin{proof}
We show first that $|\det D\rho^{(\varepsilon)}|$ is separated 
away from zero for sufficiently small 
$\varepsilon>0$. To this end observe that if $v=(v^1,v^2,...,v^d)$ is a Lipschitz  
function on $\bR^d$ with 
Lipschitz constant $L$, 
and in magnitude it is bounded by a constant $K$, 
then for every $\varepsilon>0$ 
$$
|\Pi_{i=1}^dv^i-\Pi_{i=1}^dv^{i(\varepsilon)}|
\leq \sum_{i=1}^dK^{d-1}|v^i-v^{i(\varepsilon)}|
\leq K^{d-1}L\varepsilon. 
$$
By virtue of this observation, taking into account that $D_i\rho^l$ is bounded by $N$ 
and it is Lipschitz 
continuous with a Lipschitz constant $N$, we get 
$$
|\det D\rho-\det D\rho^{(\varepsilon)}|\leq d!\,N^{d}\varepsilon. 
$$
Thus setting $\varepsilon'=M/(2d!\,N^{d})$,  for $\varepsilon\in(0,\varepsilon')$ we have 
$$
|\det(D\rho^{(\varepsilon)})|=|\det((D\rho)^{\varepsilon})|
\geq 
|\det(D\rho)|-|\det(D\rho)-\det(D\rho)^{(\varepsilon)}|
$$
$$
\geq |\det(D\rho)|/2\geq M/2.
$$
Clearly, $\rho^{(\varepsilon)}$ is a $C^{\infty}$ function. 
Hence by the implicit function theorem $\rho^{(\varepsilon)}$ 
is a local $C^{\infty}$-diffeomorphism for $\varepsilon\in(0,\varepsilon')$.  
We prove now that $\rho$ is a global $C^{\infty}$-diffeomorphism 
for sufficiently small $\varepsilon$. Since by the previous lemma $|D\rho^{-1}|\leq N'$,  
we have 
\begin{align*}
|x-y|\leq &N'|\rho(x)-\rho(y)|                             \\
\leq &N'|\rho^{(\varepsilon)}(x)-\rho^{(\varepsilon)}(y)|
+N'|\rho(x)-\rho^{(\varepsilon)}(x)+\rho^{(\varepsilon)}(y)-\rho(y)|
\end{align*}
for all $x,y\in\bR^d$ and $\varepsilon>0$. Observe that
\begin{align*}
|\rho(x)-\rho^{(\varepsilon)}(x)+\rho^{(\varepsilon)}(y)-\rho(y)|
&\leq\int_{\R^d}|\rho(x)-\rho(x-\varepsilon u)+\rho(y-\varepsilon u)-\rho(y)|k(u)\,du\\
&\leq 
\int_{\R^d}\int_0^1
\varepsilon|u||\nabla\rho(x-\theta\varepsilon u)-\nabla\rho(y-\theta\varepsilon u)|k(u)
\,d\theta\,du\\
&\leq\varepsilon N|x-y|\int_{|u|\leq1}|u|k(u)\,du
\leq\varepsilon N|x-y|.
\end{align*}
Thus 
$|x-y|
\leq N'|\rho^{(\varepsilon)}(x)-\rho^{(\varepsilon)}(y)|
+\varepsilon N'N|x-y|$.  
Therefore  setting $\varepsilon''=1/(2NN')$, for all $\varepsilon\in(0,\varepsilon'')$  we have 
\begin{equation}
|x-y|\leq 2N'|\rho^{(\varepsilon)}(x)-\rho^{(\varepsilon)}(y)|\quad\text{for all $x,y\in\bR^d$}, 
\end{equation}
which implies $\lim_{|x|\to\infty}|\rho^{(\varepsilon)}(x)|=\infty$, i.e., 
that  under 
$\rho^{(\varepsilon)}$ the pre-image of any compact set is a compact set 
for $\varepsilon\in(0,\varepsilon'')$. 
A continuous function with this 
property is called a {\it proper function}, and by Theorem 1 in \cite{G} a local $C^1$- 
diffeomorphism from $\bR^d$ into $\bR^d$ is a global diffeomorphism 
if and only if it is a proper function. 
Thus we have 
proved that $\rho^{(\varepsilon)}$ is a global $C^{\infty}$-diffeomorphism for each 
$\varepsilon\in(0,\varepsilon_0)$, where $\varepsilon_0=\min(\varepsilon',\varepsilon'')$. 

Now we can complete the proof of the lemma by noting that since  
$D_{j}\rho^{(\varepsilon)}=(D_{j}\rho)^{(\varepsilon)}$,   
the condition $|D^i\rho|\leq N$ implies  
$|D^i\rho^{(\varepsilon)}|\leq N$ for any $\varepsilon>0$.  
\end{proof}

Recall the definition $\tau_{\theta\eta}$ by \eqref{tau0}. 
Similarly, for each $t\in[0,T]$, $\theta\in[0,1]$ and $z\in Z$ we use the notation 
$\tau_{\theta\xi}$  for the $\bR^d$ valued function 
on $\bR^d$, defined by 
\begin{equation}                                                          \label{tau1}
\tau_{\theta\xi_{t,z}}(x)=x+\theta\xi_{t,z}(x), 
\end{equation}
for $x\in\bR^d$. To ease notation we will often omit the variables $t$ and $z$  
of $\eta$ and $\xi$.

We can apply the above lemmas to $\tau_{\theta\eta}$ and  
$\tau_{\theta\xi}$  by virtue of the following proposition. 
\begin{proposition}                                                \label{proposition tau}
Let Assumptions \ref{assumption M} and Assumptions \ref{assumption N} 
hold. 
Then for each $t\in[0,T]$, $\theta\in[0,1]$ and $z\in Z$ the functions 
$\tau_{\theta\eta}$ and 
$\tau_{\theta\xi}$ 
are $C^k(\bR^d)$-diffeomorphisms with 
$m_{\eta}$ and  $m_{\xi}$ in place of $k$, respectively. 
\end{proposition}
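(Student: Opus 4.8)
The plan is to verify, for a fixed $(t,z)\in[0,T]\times Z$ and $\theta\in[0,1]$, that the map $\tau_{\theta\eta}$ satisfies the hypotheses of the diffeomorphism lemmas already established, namely that it is a one-to-one $C^{m_\eta}(\bR^d)$ map onto $\bR^d$ satisfying a bound of the form \eqref{diffeo korlat}; the argument for $\tau_{\theta\xi}$ is identical with $m_\xi$ in place of $m_\eta$, so I would treat only $\tau_{\theta\eta}$ in detail. First I would record the smoothness and the derivative bounds. Since $\tau_{\theta\eta}(x)=x+\theta\eta_{t,z}(x)$, Assumption \ref{assumption M} gives that the derivatives of $\tau_{\theta\eta}$ in $x$ up to order $m_\eta=\max(\lceil m\rceil,3)$ exist and are continuous, with $D\tau_{\theta\eta}=\mathbb{I}+\theta D\eta_{t,z}$ and $D^k\tau_{\theta\eta}=\theta D^k\eta_{t,z}$ for $2\le k\le m_\eta$, so $|D^k\tau_{\theta\eta}|\le K$ for such $k$ (uniformly in $t,z,\theta$), while $|D\tau_{\theta\eta}|\le d+K$. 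Also the Jacobian bound in Assumption \ref{assumption M} reads precisely $K^{-1}\le|\det(\mathbb{I}+\theta D\eta_{t,z}(x))|=|\det D\tau_{\theta\eta}(x)|$. Hence \eqref{diffeo korlat} holds for $\tau_{\theta\eta}$ with $M=K^{-1}$, $N=\max(d+K,K)=d+K$ and $k=m_\eta$, \emph{provided} we already know $\tau_{\theta\eta}$ is a $C^{m_\eta}$-diffeomorphism.

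The only genuine issue, then, is bijectivity: $\tau_{\theta\eta}$ is a priori just a smooth map with everywhere-nonsingular differential, hence a local diffeomorphism by the inverse function theorem, but one must upgrade this to a global one. I would do this exactly as in the proof of Lemma \ref{lemma epsilon}: invoke Theorem 1 of \cite{G}, which says a local $C^1$-diffeomorphism $\bR^d\to\bR^d$ is a global diffeomorphism iff it is proper, and then prove properness by establishing a lower Lipschitz bound $|x-y|\le C|\tau_{\theta\eta}(x)-\tau_{\theta\eta}(y)|$. This bound does not follow from the pointwise Jacobian estimate alone, so some work is needed; this is the step I expect to be the main obstacle. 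The natural route is to show that $g_s:=\mathbb{I}+s\theta D\eta_{t,z}$ has uniformly bounded inverse for $s\in[0,1]$ — indeed $\|g_s^{-1}\|$ is controlled in terms of $\det g_s$ (bounded below by $K^{-1}$) and $\|g_s\|$ (bounded by $d+K$) via the singular-value estimate already used in \eqref{D} of Lemma \ref{lemma diff} — and then integrate along the segment: $\tau_{\theta\eta}(x)-\tau_{\theta\eta}(y)=\big(\int_0^1 D\tau_{\theta\eta}(y+s(x-y))\,ds\big)(x-y)$, so if the averaged matrix were uniformly invertible we would be done. Since the average of invertible matrices need not be invertible, I would instead argue that $\det D\tau_{\theta\eta}$ has constant sign (it is continuous, never zero, and equals $1$ at $\theta=0$, hence is positive for all $\theta\in[0,1]$ by continuity in $\theta$), and combine this with the Hadamard-type global inversion criterion directly, or alternatively use that a proper local homeomorphism is a covering map and $\bR^d$ is simply connected; either way properness is the crux, and properness follows once $|\tau_{\theta\eta}(x)|\to\infty$ as $|x|\to\infty$, which in turn follows from $|\eta_{t,z}|\le\bar\eta$ only if $\bar\eta$ is bounded — so in general one should get the coercivity from the lower Lipschitz bound rather than from boundedness of $\eta$.

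Concretely I would structure the final write-up as: (1) smoothness and the bounds $|D^k\tau_{\theta\eta}|\le d+K$, $|\det D\tau_{\theta\eta}|\ge K^{-1}$, noting uniformity in $(t,z,\theta)$; (2) $\det D\tau_{\theta\eta}>0$ everywhere, by continuity in $\theta$ from the value $1$ at $\theta=0$; (3) the uniform bound $\|(D\tau_{\theta\eta}(x))^{-1}\|\le K_0(d+K)^{d-1}K$ via the singular-value argument of \eqref{D}; (4) the lower Lipschitz estimate and hence properness; (5) conclude via \cite{G} that $\tau_{\theta\eta}$ is a global $C^{m_\eta}$-diffeomorphism, so that \eqref{diffeo korlat} holds for it with $M=K^{-1}$, $N=d+K$, $k=m_\eta$; (6) repeat verbatim for $\tau_{\theta\xi}$ with $m_\xi$, using Assumption \ref{assumption N}. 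Steps (3)–(4) are where I would borrow the machinery from the proofs of Lemma \ref{lemma diff} and Lemma \ref{lemma epsilon} rather than redo it.
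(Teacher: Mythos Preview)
Your approach is correct in spirit but unnecessarily complicated, and you dismiss the direct route for the wrong reason. You write that properness ``follows from $|\eta_{t,z}|\le\bar\eta$ only if $\bar\eta$ is bounded'', but $\bar\eta$ is a function of $z$ alone: for each \emph{fixed} $z\in Z$ (and the proposition fixes $t,z,\theta$), the number $\bar\eta(z)$ is finite, and the bound $|\eta_{t,z}(x)|\le\bar\eta(z)$ holds uniformly in $x\in\bR^d$. Hence
\[
|\tau_{\theta\eta}(x)|\ge |x|-\theta|\eta_{t,z}(x)|\ge |x|-\bar\eta(z)\to\infty
\quad\text{as }|x|\to\infty,
\]
which is properness for free. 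This is exactly the paper's argument: inverse function theorem for the local statement, then this trivial coercivity plus Theorem~1 of \cite{G} for the global one --- four lines in all. No lower Lipschitz bound, no sign analysis of the determinant, no singular-value estimate is needed.

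Your alternative route through a lower Lipschitz estimate also has a genuine gap, which you yourself flag but never close: the uniform bound on $\|(D\tau_{\theta\eta}(x))^{-1}\|$ in your step~(3) does not by itself yield $|x-y|\le C|\tau_{\theta\eta}(x)-\tau_{\theta\eta}(y)|$ in step~(4), precisely because the averaged matrix $\int_0^1 D\tau_{\theta\eta}(y+s(x-y))\,ds$ need not be invertible. One could salvage the argument by invoking the quantitative Hadamard--L\'evy theorem (a $C^1$ map $\bR^d\to\bR^d$ with $\sup_x\|(Df(x))^{-1}\|<\infty$ is a global diffeomorphism), but you do not do so explicitly, and in any case that machinery is unnecessary once you observe that $\bar\eta(z)$ is a constant in~$x$.
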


\begin{proof}
By the inverse function theorem 
$\tau_{\theta\eta}$ and  
$\tau_{\theta\xi}$ 
are local $C^1(\bR^d)$-diffeomor\-phisms for each $t$, $\theta$ and $z$.  
Since
$$
|\eta_{t,z}(x)|\leq \bar\eta(z)<\infty,  \quad 
|\xi_{t,z}(x)|\leq \bar\xi(z)<\infty, 
$$
we have 
$$
\lim_{|x|\to\infty}|\tau_{\theta\eta}(x)|
=\lim_{|x|\to\infty}|\tau_{\theta\xi}(x)|=\infty
$$ 
Hence $\tau_{\theta\eta}$ and  
$\tau_{\theta\xi}$ 
are global $C^1$-diffeomorphisms by Theorem 1 in \cite{G} 
for each $t\in[0,T]$, $z\in Z$ and $\theta\in[0,1]$. 
Note that by the formula on the derivative of inverse functions  
a $C^{1}(\bR^d)$-diffeomorphism and its inverse have continuous derivatives 
up to the same order. 
This observation finishes the proof of the proposition. 
 \end{proof}

\begin{corollary}                                                               \label{corollary epsilon}
Let  Assumptions \ref{assumption M} and \ref{assumption N} 
hold.  
Then Lemmas \ref{lemma chainrule} through \ref{lemma epsilon} hold 
for $\tau_{\theta\eta}$ and  
$\tau_{\theta\xi}$ 
 in place of $\rho$ 
and with $m_{\eta}$ and $m_{\xi}$ in place of $k$, respectively. 
In particular, 
there are positive constants $M=M(K,d,m)$, $N=(K,d,m)$ and $\varepsilon_0$ 
such that 
$$
M\leq  
\min(|{\rm{det}}D\tau_{\theta\eta}^{-1}|, 
|{\rm{det}}D\tau_{\theta\xi}^{-1}|)
$$
$$
M\leq  
\min(|{\rm{det}}D\tau_{\theta\eta}^{(\varepsilon)}|, 
|{\rm{det}}D\tau_{\theta\xi}^{(\varepsilon)}|, 
|{\rm{det}}D(\tau_{\theta\eta}^{(\varepsilon)})^{-1}|, 
|{\rm{det}}D(\tau_{\theta\xi}^{(\varepsilon)})^{-1}|), 
$$
$$
|D^k \tau_{\theta\eta}^{(\varepsilon)}|\leq N,\quad |D^k(\tau_{\theta\eta}^{(\varepsilon)})^{-1}|\leq N, 
\quad
|D^l \tau_{\theta\xi}^{(\varepsilon)}|\leq N,
\quad
|D^l(\tau_{\theta\xi}^{(\varepsilon)})^{-1}|\leq N  
$$
for all $\varepsilon\in(0,\varepsilon_0)$, $\theta\in[0,1]$, $(t,x,z)\in H_T\times Z$, and for 
$k=1,2,...,m_{\eta}$ and  $l=1,2,...,m_{\xi}$. 
\end{corollary}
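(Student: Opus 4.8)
The plan is to verify that Assumptions~\ref{assumption M} and~\ref{assumption N} place the families $\{\tau_{\theta\eta}\}$ and $\{\tau_{\theta\xi}\}$ inside the hypotheses of Lemmas~\ref{lemma chainrule}--\ref{lemma epsilon} with constants independent of $(t,z,\theta)$, and then to read off the displayed estimates directly from those lemmas.

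First I would invoke Proposition~\ref{proposition tau}: for every $t\in[0,T]$, $z\in Z$ and $\theta\in[0,1]$ the map $\tau_{\theta\eta}$ is a $C^{m_{\eta}}(\bR^d)$-diffeomorphism and $\tau_{\theta\xi}$ is a $C^{m_{\xi}}(\bR^d)$-diffeomorphism. Next I would check that \eqref{diffeo korlat} holds for $\rho=\tau_{\theta\eta}$ with $k=m_{\eta}$ and with constants depending only on $K$, $d$, $m$: since $D\tau_{\theta\eta}=\mathbb{I}+\theta D\eta$ and $D^i\tau_{\theta\eta}=\theta D^i\eta$ for $i\geq2$, the bounds $|D^k\eta|\leq K$ from Assumption~\ref{assumption M} give $|D^i\tau_{\theta\eta}|\leq N$ for $1\leq i\leq m_{\eta}$ with $N=N(K,d,m)$, while the lower bound $|\det D\tau_{\theta\eta}(x)|=|\det(\mathbb{I}+\theta D\eta_{t,z}(x))|\geq K^{-1}$ is exactly the last condition in Assumption~\ref{assumption M}. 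The identical computation with Assumption~\ref{assumption N} handles $\tau_{\theta\xi}$ with $k=m_{\xi}$. Note $m_{\eta}\geq 3$ and $m_{\xi}\geq 2$, so the hypothesis $k\geq2$ of Lemma~\ref{lemma epsilon} is met.

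With this in place, Lemmas~\ref{lemma chainrule} through~\ref{lemma epsilon} apply with $\rho$ replaced by $\tau_{\theta\eta}$ (resp.\ $\tau_{\theta\xi}$) and $k$ by $m_{\eta}$ (resp.\ $m_{\xi}$). Every constant produced in those lemmas is a function of the $M$, $N$, $d$ and the order of differentiation in \eqref{diffeo korlat} only, and these inputs are now uniform in $(t,z,\theta)$, so all the resulting constants are uniform as well. Concretely: Lemma~\ref{lemma diff} applied to $\tau_{\theta\eta}$ gives $|\det D\tau_{\theta\eta}|\leq d!\,N^d$, whence $|\det D\tau_{\theta\eta}^{-1}|\geq(d!\,N^d)^{-1}$, and $|D^i\tau_{\theta\eta}^{-1}|\leq N'$ for $1\leq i\leq m_{\eta}$; Lemma~\ref{lemma epsilon} provides $\varepsilon_0=\varepsilon_0(K,d,m)>0$ such that for $\varepsilon\in(0,\varepsilon_0)$ the mollified maps $\tau_{\theta\eta}^{(\varepsilon)}=x+\theta\eta_{t,z}^{(\varepsilon)}$ (using linearity of $S_\varepsilon$ and that $S_\varepsilon$ fixes the identity, by symmetry of $k$) are $C^{\infty}(\bR^d)$-diffeomorphisms satisfying \eqref{diffeo korlat} with $M/2$ in place of $M$ and the same $N$, since $D_j\rho^{(\varepsilon)}=(D_j\rho)^{(\varepsilon)}$; and a second application of Lemma~\ref{lemma diff}, now to $\tau_{\theta\eta}^{(\varepsilon)}$, bounds the determinant and the derivatives up to order $m_{\eta}$ of $(\tau_{\theta\eta}^{(\varepsilon)})^{-1}$. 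Performing the same steps for $\tau_{\theta\xi}$ with $m_{\xi}$, then taking for the final $M$ the minimum of the finitely many lower bounds obtained, for $N$ the maximum of the upper bounds, and shrinking $\varepsilon_0$ to the smaller of the two thresholds, yields all the displayed inequalities.

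The argument is essentially bookkeeping; the only point that needs attention is confirming that no constant emerging from Lemmas~\ref{lemma chainrule}--\ref{lemma epsilon} secretly depends on $(t,z,\theta)$. Since each such constant depends only on $M$, $N$, $d$ and the differentiation order, and these are controlled through $K$, $d$, $m$ alone by Assumptions~\ref{assumption M} and~\ref{assumption N}, there is no genuine obstacle.
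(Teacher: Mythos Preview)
Your proposal is correct and matches the paper's approach: the paper states this result as a corollary with no separate proof, treating it as an immediate consequence of Proposition~\ref{proposition tau} together with Lemmas~\ref{lemma chainrule}--\ref{lemma epsilon}, which is precisely the bookkeeping you carry out. Your explicit verification that \eqref{diffeo korlat} holds for $\tau_{\theta\eta}$ and $\tau_{\theta\xi}$ with constants depending only on $K,d,m$, and your observation that $\tau_{\theta\eta}^{(\varepsilon)}=x+\theta\eta^{(\varepsilon)}$ (using $S_\varepsilon x=x$ from the symmetry of the kernel), simply spell out what the paper leaves implicit.
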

 
\begin{lemma}                                                                                                 \label{lemma Minkowski}
Let $(S,\mathcal{S},\nu)$ be a measure space with a $\sigma$-finite measure $\nu$, 
and let $g=g(s,x)$ be a    
$\overline{\cS\otimes\cB(\R^d)}$-measurable real function  on $S\times\R^d$, 
where $\overline{\cS\otimes\cB(\R^d)}$ is the $\nu\otimes dx$-completion of the 
product $\sigma$-algebra $\cS\otimes\cB(\R^d)$.  Assume that 
$$
\int_{|x|\leq R}\int_{S}|g(s,x)|\,\nu(ds)\,dx<\infty\quad \text{for every $R>0$.}
$$
Then the following statements hold. 

\begin{enumerate}
\item[(i)]  If for a multi-number $\alpha$ the derivative 
$D_\alpha g$ of $g$ in $x$ is 
a $\overline{\cS\otimes\cB(\R^d)}$-measurable function such that 
$$
\int_S\int_{\{|x|\leq R\}}|D_\alpha g(s,x)|\,dx\,\nu(ds)<\infty 
$$
for every $R>0$, then $dx$-almost everywhere
\begin{equation}                                                                                     \label{Dg}
D_{\alpha}\int_{S}g(s,x)\,\nu(ds)=\int_{S}D_{\alpha}g(s,x)\,\nu(ds). 
\end{equation}
\item[(ii)] If $D_\alpha g$ is a $\overline{\cS\otimes\cB(\R^d)}$-measurable function 
for every multi-number $\alpha$, $|\alpha|\leq m$, such that 
$$
\int_S|g(s)|_{W^m_p}\nu (ds)<\infty,
$$
then 
\begin{equation}                                                                         \label{Minkowski} 
\left|\int_S g(s,x)\nu(ds) \right|_{W^m_p}\le \int_S |g(s)|_{W^m_p}\nu(ds).
\end{equation}
\end{enumerate}
\end{lemma}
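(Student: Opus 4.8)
The plan is to prove part (i) as a differentiation-under-the-integral-sign statement, verified by testing against functions $\varphi\in C_0^\infty(\R^d)$, and then to obtain part (ii) by combining (i) with the classical Minkowski integral inequality for the $L_p$-norm, summed over the multi-numbers of length at most $m$.

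For part (i), I would first observe that the assumption $\int_{\{|x|\le R\}}\int_S|g(s,x)|\,\nu(ds)\,dx<\infty$ together with Tonelli's theorem shows that $h(x):=\int_S g(s,x)\,\nu(ds)$ is a well-defined element of $L_1(\{|x|\le R\})$ for every $R>0$, hence a locally integrable function that determines a distribution on $\R^d$; only then does it make sense to form $D_\alpha h$. Next, for a fixed $\varphi\in C_0^\infty(\R^d)$ with $\mathrm{supp}\,\varphi\subset\{|x|\le R\}$, I would write out $\int_{\R^d}h(x)D_\alpha\varphi(x)\,dx$, interchange the order of integration by the Fubini--Tonelli theorem (legitimate since $\nu$ is $\sigma$-finite, $g$ is $\overline{\cS\otimes\cB(\R^d)}$-measurable, and $\int_S\int_{\{|x|\le R\}}|g(s,x)D_\alpha\varphi(x)|\,dx\,\nu(ds)<\infty$), use that for $\nu$-almost every $s$ the function $D_\alpha g(s,\cdot)$ is the weak $x$-derivative of $g(s,\cdot)$ to move $D_\alpha$ off $\varphi$, and then interchange the order of integration again, this time using the hypothesis $\int_S\int_{\{|x|\le R\}}|D_\alpha g(s,x)|\,dx\,\nu(ds)<\infty$. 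Comparing the resulting identity with the definition of the distributional derivative of $h$ and letting $\varphi$ range over $C_0^\infty(\R^d)$ yields \eqref{Dg}.

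For part (ii), I would first check that its hypothesis implies the one needed in part (i): from $\int_S|g(s)|_{W^m_p}\,\nu(ds)<\infty$ we get $\int_S|D_\alpha g(s)|_{L_p}\,\nu(ds)<\infty$ for each $\alpha$ with $|\alpha|\le m$, and an application of H\"older's inequality on the ball $\{|x|\le R\}$ converts this into the required local $L_1$ bound. Then I would apply the classical Minkowski integral inequality, namely $\big|\int_S\Phi(s,\cdot)\,\nu(ds)\big|_{L_p}\le\int_S|\Phi(s)|_{L_p}\,\nu(ds)$ for jointly measurable $\Phi$ with finite right-hand side (which, if a self-contained proof is wanted, follows from $L_p$--$L_q$ duality with $q$ the conjugate exponent, together with Fubini), taking $\Phi=D_\alpha g$; using \eqref{Dg} to identify $D_\alpha\int_S g(s,\cdot)\,\nu(ds)$ with $\int_S D_\alpha g(s,\cdot)\,\nu(ds)$, this bounds $|D_\alpha\int_S g(s,\cdot)\,\nu(ds)|_{L_p}$ by $\int_S|D_\alpha g(s)|_{L_p}\,\nu(ds)$, and summing over all $\alpha$ with $|\alpha|\le m$ gives \eqref{Minkowski}.

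The one point demanding care is the repeated use of the Fubini--Tonelli theorem: this is precisely why $\nu$ is assumed $\sigma$-finite, why the joint measurability of $g$ and its derivatives is posited with respect to the $\nu\otimes dx$-completion $\overline{\cS\otimes\cB(\R^d)}$ of $\cS\otimes\cB(\R^d)$, and why the two local-integrability hypotheses are imposed (the first of them also being what guarantees that $\int_S g(s,\cdot)\,\nu(ds)$ is a genuine locally integrable function whose distributional derivatives may legitimately be computed). Apart from this bookkeeping, the argument is routine, and I do not expect a substantive obstacle.
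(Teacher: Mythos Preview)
Your proposal is correct and follows essentially the same route as the paper: part (i) is obtained by testing against $\varphi\in C_0^\infty(\R^d)$ and interchanging integrals via Fubini, and part (ii) by combining (i) with the Minkowski integral inequality. The only cosmetic difference is that the paper carries out the duality argument for the $L_p$-bound explicitly (pairing $D_\alpha G$ with $\varphi$ and applying H\"older's inequality to get $|D_\alpha G|_{L_p}\le\int_S|D_\alpha g(s)|_{L_p}\,\nu(ds)$), rather than citing Minkowski's integral inequality by name; your parenthetical remark that this inequality follows from $L_p$--$L_q$ duality plus Fubini is exactly the paper's computation.
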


\begin{proof} 
 Set $G(x)=\int_S g(s,x)\nu(ds)$. To prove (i) notice that 
  by the definition of generalised 
   derivatives and by Fubini's theorem 
  \begin{align*}
  \int_{\R^d}D_\alpha G(x)\varphi(x)\,dx
  &=(-1)^{|\alpha|}\int_{\R^d}\int_Sg(s,x)\,\nu(ds)D_\alpha \varphi(x)\,dx
  =\int_S\int_{\R^d}D_\alpha g(s,x) \varphi(x)\,dx\,\nu(ds)\\
  &=\int_{\R^d}\int_SD_\alpha g(s,x)\, \nu(ds)\varphi(x)\,dx
  \end{align*}
   for every $\varphi\in C^\infty_0(\R^d)$, which implies \eqref{Dg}. 
Hence 
 by H\"older's inequality 
  $$
  \left|\int_{\R^d}D_\alpha G(x)\varphi(x)\,dx \right|\le |\varphi|_{L_q} 
  \int_S |D_\alpha g(s)|_{L_p}\,\nu(ds)
  $$ 
  for every $\varphi\in C^\infty_0(\R^d)$, which implies 
  $$
  |D_\alpha G|_{L_p}
  \leq \int_S |D_\alpha g(s)|_{L_p}\nu(ds), 
  $$
  and \eqref{Minkowski} follows. 
\end{proof}    

For each $t\in[0,T]$ and $z\in Z$ define the operators $T_{t,z}$, $I=I_{t,z}$ and 
$J=J_{t,z}$ by 
\begin{equation}                                                                  \label{def T}
T_{t,z}\varphi(x)=\varphi(x+\xi_{t,z}(x)),
\end{equation}
\begin{equation}                                                                   \label{def IJ}
I_{t,z}\varphi(x)=\varphi(x+\xi_{t,z}(x))-\varphi(x), \quad
J_{t,z}\varphi(x)=\varphi(x+\eta_{t,z}(x))-\varphi(x)-\eta_{t,z}(x)\nabla\varphi(x)
\end{equation}
for $\varphi\in C^{\infty}_0(\bR^d)$. 
By Taylor's formula we have 
\begin{equation}                                                                        \label{taylor1}
I_{t,z}\varphi(x)=\int_0^1\varphi_{i}(x+\theta \xi_{t,z}(x))\xi^i_{t,z}(x)\, d\theta, 
\end{equation}
\begin{equation}                                                                        \label{taylor2}
J_{t,z}\varphi(x)
=\int_0^1
(1-\theta)\varphi_{ij}(x+\theta \eta_{t,z}(x))\eta^i_{t,z}(x)\eta^j_{t,z}(x)\,d\theta 
\end{equation}
for $\varphi\in C_0^{\infty}(\bR^d)$, 
where 
$\varphi_i=D_i\varphi$ and $\varphi_{ij}=D_{ij}\varphi$. 

\begin{lemma}                                                          \label{lemma IJ}
Let Assumptions \ref{assumption M} and \ref{assumption N} hold. Then 
$T_{t,z}\varphi(x)$, $I_{t,z}\varphi(x)$ 
and 
$J_{t,z}\varphi(x)$ are $\cB(H_T)\otimes\cZ$-measurable functions 
of $(t,x,z)\in H_T\times Z$ for each $\varphi\in C_0^{\infty}(\bR^d)$. For every 
multi-number $\alpha$  
of length $k\leq m$ we have 
\begin{equation}                                                                   \label{TI}
|D_{\alpha}T_{t,z}\varphi|_{L_p}
\leq N|\varphi|_{W^k_p}, 
\quad 
|D_{\alpha}I_{t,z}\varphi|_{L_p}\leq N\bar\xi(z)|\varphi|_{W^{k+1}_p},
\end{equation}
\begin{equation}                                                                    \label{J}
|D_{\alpha}J_{t,z}\varphi|_{L_p}\leq N\bar\eta^2(z)|\varphi|_{W^{k+2}_p}
\end{equation}
for $t\in[0,T]$, $z\in Z$ and $p\in[1,\infty)$, where $N$ is a constant depending only 
on $d,K,m,p$.  
\end{lemma}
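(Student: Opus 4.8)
The plan is to reduce all three bounds to the chain rule (Lemma~\ref{lemma chainrule}), the Leibniz rule, and a single change of variables estimate, exploiting that $\tau_{\theta\xi_{t,z}}$ and $\tau_{\theta\eta_{t,z}}$ are diffeomorphisms with bounds uniform in $\theta\in[0,1]$ (Proposition~\ref{proposition tau} and Corollary~\ref{corollary epsilon}). The measurability statement is the easy part: by Assumptions~\ref{assumption M} and~\ref{assumption N} the maps $\xi,\eta$ are measurable in $(t,z)$ and continuous in $x$, so $(t,x,z)\mapsto x+\xi_{t,z}(x)$ and $(t,x,z)\mapsto x+\eta_{t,z}(x)$ are $\cB(H_T)\otimes\cZ$-measurable; composing these with the continuous functions $\varphi$, $D_i\varphi$, $D_{ij}\varphi$ and using \eqref{def T}, \eqref{def IJ} yields the claimed joint measurability of $T_{t,z}\varphi$, $I_{t,z}\varphi$, $J_{t,z}\varphi$.

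Next I would fix $t,z$, $p\in[1,\infty)$ and a multi-number $\alpha$ of length $k\le m$, and record the ingredients (dropping $t,z$ from the notation). By Corollary~\ref{corollary epsilon}, for each $\theta\in[0,1]$ the maps $\tau_{\theta\xi}$ and $\tau_{\theta\eta}$ are $C^{m_\xi}$- resp.\ $C^{m_\eta}$-diffeomorphisms of $\bR^d$, and there is $N_0=N_0(K,d,m)$ with $|\det D\tau_{\theta\xi}^{-1}|\le N_0$, $|\det D\tau_{\theta\eta}^{-1}|\le N_0$ for all $\theta\in[0,1]$; these are just the determinant conditions in Assumptions~\ref{assumption M},~\ref{assumption N} read for the inverse maps. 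Since $\tau_{\theta\xi}^i(x)=x^i+\theta\xi^i(x)$ and $|D^j\xi|\le\bar\xi\wedge K\le K$ for $1\le j\le m_\xi$, every derivative $D_\beta\tau_{\theta\xi}^i$ with $1\le|\beta|\le m_\xi$ is bounded by $1+K$, and likewise for $\tau_{\theta\eta}$ using Assumption~\ref{assumption M}. The estimate I would use over and over is that, by the change of variables $y=\tau_{\theta\xi}(x)$ (a genuine diffeomorphism),
$$
|h(\tau_{\theta\xi})|_{L_p}^p=\int_{\bR^d}|h(y)|^p\,|\det D\tau_{\theta\xi}^{-1}(y)|\,dy\le N_0\,|h|_{L_p}^p
$$
for every $h\in L_p$ and every $\theta\in[0,1]$, and the same with $\tau_{\theta\eta}$.

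With these at hand the three bounds become routine. For $D_\alpha T_{t,z}\varphi$: the case $k=0$ is the displayed estimate with $h=\varphi$, and for $1\le k\le m$, since $\varphi\in C_0^\infty$ and $\tau_{\theta\xi}$ is (for each $\theta$, in particular $\theta=1$) a $C^{m_\xi}$-diffeomorphism with $m_\xi\ge k$, Lemma~\ref{lemma chainrule} expresses $D_\alpha(\varphi\circ\tau_{\theta\xi})$ classically, via \eqref{chainrule}, as a sum of at most $C(d,k)$ products $\varphi_{i_1\dots i_n}(\tau_{\theta\xi})(\tau_{\theta\xi})^{i_1}_{\alpha(\kappa_1)}\cdots(\tau_{\theta\xi})^{i_n}_{\alpha(\kappa_n)}$ with $1\le n\le k$, whose non-$\varphi$ factors are each bounded by $1+K$; since $T_{t,z}\varphi$ equals $\varphi\circ\tau_{\theta\xi}$ at $\theta=1$, taking $L_p$-norms there and applying the change of variables estimate to $h=\varphi_{i_1\dots i_n}$ gives $|D_\alpha T_{t,z}\varphi|_{L_p}\le N|\varphi|_{W^k_p}$ with $N=N(d,K,m,p)$. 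For $D_\alpha I_{t,z}\varphi$ and $D_\alpha J_{t,z}\varphi$ I would start from the Taylor representations \eqref{taylor1}, \eqref{taylor2}: for fixed $z$ their integrands and all $x$-derivatives up to order $m_\xi$ resp.\ $m_\eta$ are continuous in $(\theta,x)$ and vanish for $x$ outside a fixed compact set (because $\varphi$ has compact support and $|\xi|\le\bar\xi(z)$, $|\eta|\le\bar\eta(z)$), so one may differentiate under the $\theta$-integral, or invoke Lemma~\ref{lemma Minkowski}. Expanding $D_\alpha(\varphi_i(\tau_{\theta\xi})\xi^i)$ by the Leibniz rule and Lemma~\ref{lemma chainrule}, every factor $D_\delta\xi^i$ that occurs is bounded by $\bar\xi(z)$ (by $|\xi|\le\bar\xi$ and $|D^j\xi|\le\bar\xi\wedge K$), every factor coming from $\tau_{\theta\xi}$ by $1+K$, and only derivatives of $\varphi$ of order between $1$ and $k+1$ appear; the change of variables estimate then gives $|D_\alpha(\varphi_i(\tau_{\theta\xi})\xi^i)|_{L_p}\le N\bar\xi(z)|\varphi|_{W^{k+1}_p}$ uniformly in $\theta$, and Minkowski's inequality for integrals yields the bound \eqref{TI} for $I_{t,z}\varphi$. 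The bound \eqref{J} is the identical computation on \eqref{taylor2}: the two factors $D_\delta\eta^i$, $D_{\delta'}\eta^j$ each bounded by $\bar\eta(z)$ produce $\bar\eta^2(z)$, derivatives of $\varphi$ of order between $2$ and $k+2$ appear, and the weight $(1-\theta)\in[0,1]$ is harmless.

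I expect the main difficulty to be bookkeeping rather than anything conceptual: one must arrange the Leibniz/chain-rule expansions so it is transparent that every differentiation falling on $\xi$ or $\eta$ lands on a factor controlled by $\bar\xi(z)$ or $\bar\eta(z)$, and that no more than one, resp.\ two, such factors are ever produced; and one must make sure the change of variables is legitimate and uniform in $\theta\in[0,1]$ — which is exactly where the lower bounds on $|\det(\mathbb{I}+\theta D\xi)|$ and $|\det(\mathbb{I}+\theta D\eta)|$ in Assumptions~\ref{assumption M} and~\ref{assumption N} (via Corollary~\ref{corollary epsilon}) enter.
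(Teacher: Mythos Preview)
Your proposal is correct and follows essentially the same approach as the paper: the paper's proof merely says to use Lemmas~\ref{lemma chainrule} and~\ref{lemma chainrule2} together with Lemma~\ref{lemma Minkowski}, and your argument is precisely a detailed unpacking of that---Taylor representations \eqref{taylor1}--\eqref{taylor2}, Leibniz and chain rule expansions, the change of variables through the diffeomorphisms $\tau_{\theta\xi}$, $\tau_{\theta\eta}$ (via Proposition~\ref{proposition tau} and the determinant bounds in Assumptions~\ref{assumption M}--\ref{assumption N}), and Minkowski's inequality in $\theta$. The bookkeeping you flag (exactly one factor of $\bar\xi$ for $I$, exactly two of $\bar\eta$ for $J$) is handled correctly.
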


\begin{proof}
Clearly, $T_{t,z}\varphi(x)$, $I_{t,z}\varphi(x)$ and $J_{t,z}\varphi(x)$ 
are $\cB(H_T)\otimes\cZ$-measurable functions 
by Fubini's theorem, and one can easily get estimates \eqref{TI}-\eqref{J} 
by using Lemmas \ref{lemma chainrule}  
and \ref{lemma chainrule2}, together with Lemma \ref{lemma Minkowski}. 
\end{proof}
\begin{corollary}
Let Assumptions \ref{assumption M} and \ref{assumption N} hold. Then for every 
$t,z$ the operators $T_{t,z}$, $I_{t,z}$ and $J_{t,z}$ extend to bounded linear operators 
from $W^k_p$ to $W^k_p$, from $W^{k+1}_p$ to $W^k_p$ and from $W^{k+2}_p$ to $W^k_p$,
respectively,  for $k=0,1,2,...,m$, such that 
$T_{t,z}\varphi $, 
$I_{t,z}f$ and $J_{t,z}g$ are $\cB([0,T])\otimes\cZ$-measurable $W^k_p$-valued functions of $(t,z)$ 
and 
$$
|T_{t,z}\varphi|_{W^k_p}
\leq N|\varphi|_{W^k_p}, 
\quad 
|I_{t,z}f|_{W^k_p}\leq N\bar\xi(z)|f|_{W^{k+1}_p},
\quad 
|J_{t,z}g|_{W^k_p}\leq N\bar\eta^2(z)|g|_{W^{k+2}_p}
$$
for all $\varphi\in W^k_p$, $f\in W^{k+1}_p$ and $g\in W^{k+2}_p$. \end{corollary}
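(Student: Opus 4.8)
The whole statement will be derived from Lemma \ref{lemma IJ} by density of $C_0^\infty$ in the Sobolev spaces, together with a routine measurability argument using the separability of $W^k_p$. First, for $\varphi\in C_0^\infty$, Lemma \ref{lemma IJ} gives
$$
|D_\alpha T_{t,z}\varphi|_{L_p}\le N|\varphi|_{W^{|\alpha|}_p},\quad |D_\alpha I_{t,z}\varphi|_{L_p}\le N\bar\xi(z)|\varphi|_{W^{|\alpha|+1}_p},\quad |D_\alpha J_{t,z}\varphi|_{L_p}\le N\bar\eta^2(z)|\varphi|_{W^{|\alpha|+2}_p}
$$
for every multi-number $\alpha$ of length at most $m$, with $N=N(d,K,m,p)$. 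Summing over the finitely many $\alpha$ with $|\alpha|\le k$ and bounding $|\varphi|_{W^{|\alpha|+j}_p}\le|\varphi|_{W^{k+j}_p}$ for $j=0,1,2$, one obtains
$$
|T_{t,z}\varphi|_{W^k_p}\le N|\varphi|_{W^k_p},\quad |I_{t,z}\varphi|_{W^k_p}\le N\bar\xi(z)|\varphi|_{W^{k+1}_p},\quad |J_{t,z}\varphi|_{W^k_p}\le N\bar\eta^2(z)|\varphi|_{W^{k+2}_p}
$$
for all $t\in[0,T]$, $z\in Z$, with $N$ still depending only on $d,K,m,p$.

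Since, for $p\in[1,\infty)$ and integers $k\ge0$, the space $W^k_p$ is by definition the completion of $C_0^\infty$ in its norm, the maps $\varphi\mapsto T_{t,z}\varphi$, $\varphi\mapsto I_{t,z}\varphi$, $\varphi\mapsto J_{t,z}\varphi$ are bounded linear operators on $C_0^\infty$ for the indicated pairs of norms, hence extend uniquely to bounded linear operators $T_{t,z}:W^k_p\to W^k_p$, $I_{t,z}:W^{k+1}_p\to W^k_p$, $J_{t,z}:W^{k+2}_p\to W^k_p$ obeying the same estimates. These extensions are consistent for different $k$, and $I_{t,z}$ agrees with $T_{t,z}-\Id$ on $W^{k+1}_p$, because a $C_0^\infty$-sequence converging in a stronger Sobolev norm converges in every weaker one.

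It remains to prove measurability in $(t,z)$, which I would do in two steps. Fix $\varphi\in C_0^\infty$. By Lemma \ref{lemma IJ} together with the chain rule (Lemma \ref{lemma chainrule}), each function $(t,z,x)\mapsto D_\alpha T_{t,z}\varphi(x)$, $|\alpha|\le k$, is $\cB([0,T])\otimes\cZ\otimes\cB(\bR^d)$-measurable, being an algebraic combination of derivatives of $\varphi$ evaluated at points $x+\eta_{t,z}(x)$, $x+\xi_{t,z}(x)$ and of derivatives of $\eta,\xi$; likewise for $I_{t,z}\varphi$ and $J_{t,z}\varphi$. Consequently, for every $h$ in a fixed countable dense subset of $W^k_p$,
$$
(t,z)\mapsto|T_{t,z}\varphi-h|_{W^k_p}=\sum_{|\alpha|\le k}\Big(\int_{\bR^d}|D_\alpha T_{t,z}\varphi(x)-D_\alpha h(x)|^p\,dx\Big)^{1/p}
$$
is $\cB([0,T])\otimes\cZ$-measurable, each of the finitely many integrals on the right being measurable in $(t,z)$ by Fubini's theorem. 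Since $W^k_p$ is a separable metric space, this proves that $(t,z)\mapsto T_{t,z}\varphi$ is a Borel measurable $W^k_p$-valued map, and the same applies to $I_{t,z}\varphi$ and $J_{t,z}\varphi$. Finally, for general $f\in W^{k+1}_p$ take $\varphi_n\in C_0^\infty$ with $\varphi_n\to f$ in $W^{k+1}_p$; by the estimate already obtained, $|I_{t,z}f-I_{t,z}\varphi_n|_{W^k_p}\le N\bar\xi(z)|f-\varphi_n|_{W^{k+1}_p}\to0$ for each $(t,z)$, so $(t,z)\mapsto I_{t,z}f$ is a pointwise limit of Borel measurable $W^k_p$-valued maps and thus itself Borel measurable; the cases $T_{t,z}\varphi$ for $\varphi\in W^k_p$ and $J_{t,z}g$ for $g\in W^{k+2}_p$ are identical.

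I do not expect a genuine difficulty, since everything reduces to Lemma \ref{lemma IJ}; the only step needing some care is the upgrade of the joint scalar measurability in $(t,z,x)$ supplied by Lemma \ref{lemma IJ} to Borel measurability of the Banach-valued maps $(t,z)\mapsto T_{t,z}\varphi$ and their analogues (handled above through the separability of $W^k_p$), together with the bookkeeping of which Sobolev norm controls which operator, ensuring the density argument goes through with the advertised gain of one, respectively two, extra derivatives for $I_{t,z}$ and $J_{t,z}$.
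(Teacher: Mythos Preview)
Your proposal is correct and is precisely the argument the paper has in mind: the corollary is stated without proof, as an immediate consequence of Lemma \ref{lemma IJ} via the density of $C_0^{\infty}$ in $W^k_p$, and your write-up fills in exactly those routine details (summing the pointwise derivative bounds, extending by density, and upgrading joint scalar measurability to Banach-valued measurability through the separability of $W^k_p$).
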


\begin{proposition}                                                                     \label{proposition LMN}
Under Assumptions \ref{assumption L}, \ref{assumption M} and 
\ref{assumption N} for every integer $k\in[1,m]$ we have 
\begin{equation}                                                                      \label{LMN}
|\cL(t)v|_{W^{k-2}_p}\leq N|v|_{{W^{k}_p}},\quad |\cM(t)v|_{W^{k-2}_p}\leq N|v|_{{W^{k}_p}}, 
\quad |\cN(t)v|_{W^{k-1}_p}\leq N|v|_{{W^{k}_p}}
\end{equation}
\begin{equation}                                                                         \label{Jl}
|\cJ^{l}(t)v|_{W^{k-1}_p}\leq N|v|_{{W^{k}_p}} 
\quad l=0,1,2,...,d
\end{equation}
for all $v\in W^k_p$ and $t\in[0,T]$, where $\cJ^{l}$ for $l=0,1,...,d$ 
are defined by \eqref{def Jk}-\eqref{def J0} and $N$ is a constant, 
depending only on $d$, $m$, $p$, $K$, $T$, $K_{\eta}$ and 
$K_{\xi}$.
\end{proposition}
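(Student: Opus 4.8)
The plan is to reduce each of the four estimates to a pointwise bound, in $z$ (and, for the $\cJ$-operators, also in an auxiliary parameter $\theta\in[0,1]$), of the relevant Sobolev norm of the integrand, and then to integrate against $\mu$ and $\nu$ by the Minkowski-type inequality of Lemma \ref{lemma Minkowski}(ii); the finiteness of $K_\eta^2=\int_Z\bar\eta^2\,\mu(dz)$ and $K_\xi=\int_Z\bar\xi\,\nu(dz)$ then absorbs the $z$-integration.

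The bound for $\cL$ is elementary: applying the Leibniz rule to $\cL(t)v=a^{ij}D_{ij}v+b^iD_iv+cv$ and using that, by Assumption \ref{assumption L}, the $x$-derivatives of $a^{ij}$, $b^i$ and $c$ are bounded up to orders exceeding $k-2$ for integers $k\le m$, gives $|\cL(t)v|_{W^{k-2}_p}\le N|v|_{W^k_p}$ when $k\ge2$; for $k=1$ one reads $W^{-1}_p$ as $H^{-1}_p$, writes $a^{ij}D_{ij}v=D_i(a^{ij}D_jv)-(D_ja^{ij})D_iv$, and uses boundedness of $D_i\colon L_p\to H^{-1}_p$ together with $|a^{ij}D_jv|_{L_p}\le K|v|_{W^1_p}$. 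For $\cN$, and for $\cM$ with $k\ge2$: since $\cN(t)v=\int_ZI_{t,z}v\,\nu(dz)$ and $\cM(t)v=\int_ZJ_{t,z}v\,\mu(dz)$, summing \eqref{TI}--\eqref{J} over multi-numbers of length $\le k-1$, resp.\ $\le k-2$, gives $|I_{t,z}v|_{W^{k-1}_p}\le N\bar\xi(z)|v|_{W^k_p}$ and $|J_{t,z}v|_{W^{k-2}_p}\le N\bar\eta^2(z)|v|_{W^k_p}$; the measurability needed for Lemma \ref{lemma Minkowski}(ii) is provided by the corollary following Lemma \ref{lemma IJ}, and integrating against $\nu$, resp.\ $\mu$, finishes these cases. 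For $\cM$ with $k=1$ one instead uses the identity $(\cM v,\varphi)=-(\cJ^iv,D_i\varphi)+(\cJ^0v,\varphi)$ of Remark \ref{remark ibp}, which reduces the $H^{-1}_p$-bound of $\cM v$ to the $L_p$-bounds of $\cJ^iv$ and $\cJ^0v$, i.e.\ to the case $k=1$ of \eqref{Jl}.

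The estimates \eqref{Jl} for $\cJ^l$ are the main point; I would prove them first for $v\in C_0^\infty$ and then pass to $v\in W^k_p$ by density. By Taylor's formula, $v(\tau_{\theta\eta}(x))-v(x)=\theta\int_0^1\eta^j(x)v_j(\tau_{s\theta\eta}(x))\,ds$, so the integrand defining $\cJ^k(t)v$ is $\theta\int_0^1\eta^k(x)\eta^j(x)v_j(\tau_{s\theta\eta}(x))\,ds$, and the integrand of $\cJ^0(t)v$ is a combination of $\eta^k_k(x)(v(\tau_{\theta\eta}(x))-v(x))$ and $\theta\eta^k(x)\eta^l_k(x)v_l(\tau_{\theta\eta}(x))$; after the same Taylor step applied to the first of these, each such integrand is (a product of two ``$\eta$-factors'', each of the form $\eta^i$ or $D_l\eta^i$) times (a value $v_j$ of a first derivative of $v$ at a shifted point $\tau_{s\theta\eta}(x)$ or $\tau_{\theta\eta}(x)$). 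Taking $D_\alpha$ with $|\alpha|=n\le k-1$ and applying the Leibniz rule produces a finite sum of terms $D_\beta(\text{two }\eta\text{-factors})\cdot D_\gamma(v_j(\tau_{s\theta\eta}))$ with $\beta\sqcup\gamma$ a partition of $\{1,\dots,n\}$. The crucial observation is that, by $|\eta|\le\bar\eta$ and $|D^j\eta|\le\bar\eta\wedge K$ in Assumption \ref{assumption M}, every $\eta$-factor together with all its $x$-derivatives up to order $m_\eta$ is bounded by $\bar\eta(z)$, so $|D_\beta(\text{two }\eta\text{-factors})|\le N(n,d)\bar\eta^2(z)$; and since $\tau_{s\theta\eta}$ (likewise $\tau_{\theta\eta}$) is, for all $s,\theta\in[0,1]$ and $(t,z)$, a $C^{m_\eta}(\bR^d)$-diffeomorphism satisfying \eqref{diffeo korlat} with constants depending only on $K$ and $d$ (Corollary \ref{corollary epsilon}, applied with $s\theta$ in place of $\theta$), Lemma \ref{lemma chainrule2}(i) with the bounded factor equal to $1$ gives $|D_\gamma(v_j(\tau_{s\theta\eta}))|_{L_p}\le|v_j(\tau_{s\theta\eta})|_{W^{|\gamma|}_p}\le C|v|_{W^k_p}$, since $|\gamma|+1\le n+1\le k$. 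Hence every $D_\alpha$ of the integrand has $L_p$-norm at most $N\bar\eta^2(z)|v|_{W^k_p}$, uniformly in $s,\theta,z$ and $t$; summing over $|\alpha|\le k-1$ and carrying out the inner $s$-integration bounds the $W^{k-1}_p$-norm of the integrand by $N\bar\eta^2(z)|v|_{W^k_p}$, so that, after checking the required measurability by Fubini as in Lemma \ref{lemma IJ}, Lemma \ref{lemma Minkowski}(ii) with the measure $d\theta\otimes\mu$ yields \eqref{Jl} with $N=N(d,m,p,K,T,K_\eta,K_\xi)$.

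The one step I expect to be delicate is exactly the ``product of two $\eta$-factors is bounded by $\bar\eta^2(z)$'' estimate: this is what makes the integral against the merely $\sigma$-finite measure $\mu$ converge, and it genuinely uses $|D^j\eta|\le\bar\eta\wedge K$ rather than only $|D^j\eta|\le K$. Everything else --- the density extension, the Fubini measurability and the chain-rule bounds --- is routine once Lemmas \ref{lemma chainrule2}, \ref{lemma Minkowski}, \ref{lemma IJ} and Corollary \ref{corollary epsilon} are available.
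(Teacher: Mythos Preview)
Your proposal is correct and follows essentially the same route as the paper: Taylor expansion of the integrands, the chain-rule/diffeomorphism bound of Lemma~\ref{lemma chainrule2}, and the Minkowski-type inequality of Lemma~\ref{lemma Minkowski}, with the $k=1$ case for $\cM$ handled via the integration-by-parts identity (the paper writes out the three terms $I_1,I_2,I_3$ directly, while you route through Remark~\ref{remark ibp} and the $L_p$-bounds for $\cJ^0,\cJ^i$, which is equivalent). Your treatment of \eqref{Jl} simply spells out in detail what the paper dismisses with ``can be proved similarly'', including the key point that the two $\eta$-factors together contribute $\bar\eta^2(z)$.
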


\begin{proof}
It is sufficient to prove the proposition for $v\in C_0^{\infty}(\bR^d)$. 
Then clearly, the statement on $\cL$ with a constant $N=N(d,K,T,m,p)$ is obvious. 		
By Taylor's formula 
$$
\cM v(x)=\int_{Z}(v(x+\eta)-v(x)-\eta\nabla v(x))\,\mu(dz)
=\int_Z\int_0^1(1-\theta)v_{ij}(x+\theta \eta)\eta^i\eta^j d\theta\, \mu(dz).
$$
Hence, due to Assumption \ref{assumption M}, by Lemma \ref{lemma chainrule2} 
for $k\in[2,m]$ we get
\begin{equation*}
|\cM v|_{W^{k-2}_p}
\leq \int_Z\int_0^1\left| v_{ij}(\cdot+\theta \eta)\right|_{W^{k-2}_p}\bar{\eta}^2
\,d\theta\, \mu(dz)\leq | v|_{W^{k}_p}\int_Z\bar{\eta}^2(z)\,\mu(dz),  
\end{equation*}
which proves \eqref{LMN} for $\cM$ when $k\geq2$. For every $\varphi$ by integration by parts we have 
$$
(\cM v,\varphi)=I_1+I_2+I_3 
$$
with 
$$
I_1:=\int_Z\int_0^1(\theta-1)\int_{\bR^d}v_{j}(x+\theta \eta)\eta^i_i\eta^j\varphi(x)\,dx d\theta\, \mu(dz)
\leq N|v|_{W^1_p}|\varphi|_{L_q}\int_Z\bar\eta^2(z)\,dz
$$
$$
I_2:=\int_Z\int_0^1(\theta-1)\int_{\bR^d}v_{j}(x+\theta \eta)\eta^i\eta^j_i\varphi(x)\,dx d\theta\, \mu(dz)
\leq N|v|_{W^1_p}|\varphi|_{L_q}\int_Z\bar\eta^2(z)\,dz
$$
$$
I_3:=\int_Z\int_0^1(\theta-1)\int_{\bR^d}v_{j}(x+\theta \eta)\eta^i\eta^j\varphi_i(x)\,dx d\theta\, \mu(dz)
\leq N|v|_{W^1_p}|\varphi|_{W^1_q}\int_Z\bar\eta^2(z)\,dz. 
$$
Hence there is a positive constant $N=N(K,p,d, K_{\eta})$ such that 
$$
(\cM v,\varphi)\leq N|v|_{W^1_p}|\varphi|_{W^1_q}
$$
for any $v\in W^1_p$ and $\varphi\in C_0^{\infty}(\bR^d)$, which proves \eqref{LMN} for $\cM$ when 
$k=1$. 
For $\mathcal{N}$ we have 
	$$
	\mathcal{N}v(x)=\int_{Z}(v(x+\xi_{t,z}(x)-v(x))\,\nu(dz)
	=\int_Z\int_0^1 \xi_{t,z}(x)\nabla v(x+\theta \xi_{t,z}(x))\, d\theta\,\nu(dz).
	$$
Proceeding as before, using Assumption \ref{assumption N} 
we get \eqref{LMN} for $\cN$. Estimates \eqref{Jl} can be proved similarly. 
\end{proof}

\begin{lemma}                                                                  \label{lemma intIJ}
Let Assumptions \ref{assumption M} and \ref{assumption N} hold with $m=0$. Then 
\begin{equation}                                                             \label{intI}
\int_{\bR^d}I_{t,z}\varphi(x)\,dx\leq N\bar\xi(z)\,|\varphi|_{L_1}, 
\end{equation} 
\begin{equation}                                                             \label{intJ}
\int_{\bR^d}J_{t,z}\phi(x)\,dx\leq N\bar\eta^2(z)|\phi|_{L_1}   
\end{equation}   
for $\varphi\in W^1_1$ and $\phi\in W^2_1$ with a constant $N=N(K,d)$. 
\end{lemma}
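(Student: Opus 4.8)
The plan is to evaluate both integrals by the substitutions $y=x+\xi_{t,z}(x)$ and $y=x+\eta_{t,z}(x)$, turning them into integrals of $\varphi$ (resp.\ $\phi$) against explicit functions of the Jacobians of $\tau_{\xi_{t,z}}$ and $\tau_{\eta_{t,z}}$; the heart of the matter is then a sharp estimate showing that these functions are of order $\bar\xi(z)$ and $\bar\eta^{2}(z)$. Fix $t\in[0,T]$, $z\in Z$, write $\eta=\eta_{t,z}$, $\xi=\xi_{t,z}$, $\tau_{\eta}(x)=x+\eta(x)$, $\tau_{\xi}(x)=x+\xi(x)$ (i.e.\ $\tau_{\theta\eta}$, $\tau_{\theta\xi}$ at $\theta=1$), and recall from Proposition \ref{proposition tau} that $\tau_{\eta}$ and $\tau_{\xi}$ are global $C^{2}$-diffeomorphisms of $\bR^{d}$. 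Since $\det D\tau_{\theta\eta}=\det(\mathbb{I}+\theta D\eta)$ equals $1$ at $\theta=0$, is continuous in $\theta$, and by Assumption \ref{assumption M} never vanishes on $[0,1]$, it stays positive; hence $\det D\tau_{\eta}\ge K^{-1}$ and $\det D\tau_{\eta}^{-1}(y)=\big(\det D\tau_{\eta}(\tau_{\eta}^{-1}(y))\big)^{-1}>0$, and likewise for $\tau_{\xi}$ by Assumption \ref{assumption N}.

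For \eqref{intI}, first take $\varphi\in C_{0}^{\infty}$. The change of variables $y=\tau_{\xi}(x)$ gives
\[
\int_{\bR^{d}}I_{t,z}\varphi(x)\,dx=\int_{\bR^{d}}\varphi(\tau_{\xi}(x))\,dx-\int_{\bR^{d}}\varphi(x)\,dx=\int_{\bR^{d}}\varphi(y)\big(\det D\tau_{\xi}^{-1}(y)-1\big)\,dy,
\]
so it suffices to prove $|\det D\tau_{\xi}^{-1}-1|\le N\bar\xi(z)$ with $N=N(K,d)$. Expanding $\det(\mathbb{I}+D\xi)=1+\operatorname{tr}(D\xi)+Q_{\xi}$, where $Q_{\xi}$ is a sum of at most $C(d)$ monomials, each a product of $j\ge2$ entries of $D\xi$, and using that by Assumption \ref{assumption N} every entry of $D\xi$ is bounded by $\bar\xi(z)\wedge K$ (so every such monomial is bounded by $(\bar\xi(z)\wedge K)^{j}\le\bar\xi(z)K^{j-1}$) and $|\operatorname{tr}(D\xi)|\le d(\bar\xi(z)\wedge K)$, we get $|\det D\tau_{\xi}-1|\le N_{1}(K,d)\bar\xi(z)$ pointwise; dividing by $\det D\tau_{\xi}\ge K^{-1}$ gives the claim. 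The case of general $\varphi\in W^{1}_{1}$ then follows by approximating $\varphi$ in $W^{1}_{1}$ by $C_{0}^{\infty}$ functions and using that $I_{t,z}$ maps $W^{1}_{1}$ continuously into $L_{1}$ (Lemma \ref{lemma IJ} and the corollary following it).

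For \eqref{intJ}, argue the same way but keep the linear term of the expansion and integrate by parts in $\eta^{i}\phi_{i}$. For $\phi\in C_{0}^{\infty}$, using $\int_{\bR^{d}}\eta^{i}\phi_{i}\,dx=-\int_{\bR^{d}}(\operatorname{tr}D\eta)\phi\,dx$ (valid since $\eta^{i}\phi\in W^{1}_{1}$), one obtains
\[
\int_{\bR^{d}}J_{t,z}\phi(x)\,dx=\int_{\bR^{d}}\phi(y)\big(\det D\tau_{\eta}^{-1}(y)-1+\operatorname{tr}(D\eta)(y)\big)\,dy.
\]
Write $a:=\det D\tau_{\eta}=1+\operatorname{tr}(D\eta)+Q_{\eta}$; now a product of $j\ge2$ entries of $D\eta$ is bounded by $(\bar\eta(z)\wedge K)^{2}K^{j-2}\le\bar\eta^{2}(z)K^{j-2}$, so $|Q_{\eta}|\le N_{1}(K,d)\bar\eta^{2}(z)$, while $|\operatorname{tr}(D\eta)|\le d(\bar\eta(z)\wedge K)$. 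A direct computation gives the identity
\[
a^{-1}-1+\operatorname{tr}(D\eta)=a^{-1}\big((\operatorname{tr}D\eta)^{2}+Q_{\eta}\operatorname{tr}(D\eta)-Q_{\eta}\big),
\]
so $|a^{-1}-1+\operatorname{tr}(D\eta)|\le N\bar\eta^{2}(z)$ after dividing by $a\ge K^{-1}$. Hence $|\int_{\bR^{d}}J_{t,z}\phi\,dx|\le N\bar\eta^{2}(z)|\phi|_{L_{1}}$ for $\phi\in C_{0}^{\infty}$, and the general case $\phi\in W^{2}_{1}$ follows by density and the continuity of $J_{t,z}\colon W^{2}_{1}\to L_{1}$.

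The only slightly delicate points are the positivity of the Jacobians --- needed to drop the modulus before expanding the determinant and to justify the substitution with the correct sign --- and the bookkeeping ensuring that the quadratic-and-higher remainder $\det(\mathbb{I}+D\eta)-1-\operatorname{tr}(D\eta)$ carries the full weight $\bar\eta^{2}(z)$ rather than only $\bar\eta(z)$; both follow directly from Assumptions \ref{assumption M} and \ref{assumption N}. No genuinely hard step is anticipated.
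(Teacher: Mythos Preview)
Your route is different from the paper's: you substitute $y=\tau_\xi(x)$ (resp.\ $y=\tau_\eta(x)$) directly in $\int\varphi(\tau_\xi(x))\,dx$ and then estimate the resulting Jacobian factor, whereas the paper first invokes the Taylor representations \eqref{taylor1}--\eqref{taylor2}, substitutes $y=\tau_{\theta\xi}(x)$ (resp.\ $y=\tau_{\theta\eta}(x)$) inside the $\theta$-integral to produce auxiliary functions $\chi_{t,z,\theta}$ and $\varrho^{ij}_{t,z,\theta}$, and then integrates by parts to push the derivatives off $\varphi$, $\phi$ onto those auxiliary functions. Your argument is shorter and the determinant expansion is a neat device; the paper's version has the advantage that the bookkeeping of evaluation points is automatic.

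That bookkeeping is exactly where your argument for \eqref{intJ} has a genuine gap. In the displayed formula
\[
\int_{\bR^{d}}J_{t,z}\phi(x)\,dx=\int_{\bR^{d}}\phi(y)\big(\det D\tau_{\eta}^{-1}(y)-1+\operatorname{tr}(D\eta)(y)\big)\,dy,
\]
the two nontrivial terms live at different base points: $\det D\tau_{\eta}^{-1}(y)=a\big(\tau_{\eta}^{-1}(y)\big)^{-1}$ involves $D\eta$ at $\tau_{\eta}^{-1}(y)$, while $\operatorname{tr}(D\eta)(y)$ is evaluated at $y$. Your algebraic identity $a^{-1}-1+\operatorname{tr}(D\eta)=a^{-1}\big((\operatorname{tr}D\eta)^{2}+Q_{\eta}\operatorname{tr}(D\eta)-Q_{\eta}\big)$ is correct only when all quantities are taken at the \emph{same} point, so it does not apply to the integrand as written. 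The repair is easy: split off the cross term
\[
\operatorname{tr}(D\eta)(y)-\operatorname{tr}(D\eta)\big(\tau_{\eta}^{-1}(y)\big),
\]
and bound it by $|D^{2}\eta|_{\infty}\,|y-\tau_{\eta}^{-1}(y)|\le d(\bar\eta(z)\wedge K)\,\bar\eta(z)\le d\,\bar\eta^{2}(z)$, using that under Assumption~\ref{assumption M} with $m=0$ one still has $m_{\eta}=3$, so $|D^{2}\eta|\le\bar\eta\wedge K$, and $|y-\tau_\eta^{-1}(y)|=|\eta(\tau_\eta^{-1}(y))|\le\bar\eta(z)$. The remaining piece $a(\tau_\eta^{-1}(y))^{-1}-1+\operatorname{tr}(D\eta)(\tau_\eta^{-1}(y))$ is then handled by your identity. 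With this correction the proof goes through; your argument for \eqref{intI} is fine as it stands, since there no such point mismatch arises.
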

 
\begin{proof}
The proof of \eqref{intJ} is given in \cite{Da}. 
For the convenience of the reader we prove both estimates here. 
We may assume that $\varphi, \phi\in C_0^{\infty}$.  
For each $(t,z,\theta)\in[0,T]\times Z\times [0,1]$ 
let $\pi^{-1}_{t,z,\theta}$ and $\tau^{-1}_{t,z,\theta}$ denote 
the inverse of the functions $x\to x+\theta\xi_{t,z}(x)$ 
and $x\to x+\theta\eta_{t,z}(x)$, respectively. 
Using \eqref{taylor1} and \eqref{taylor2} by change of variables we have 
\begin{align}
\int_{\bR^d}I_{t,z}\varphi(x)\,dx
=&\int_0^1\int_{\bR^d}\nabla\varphi(x)\chi_{t,z,\theta}(x)\,dx\,d\theta                                          \label{iI}\\
\int_{\bR^d}J_{t,z}\phi(x)\,dx
=&\int_0^1\int_{\bR^d}(1-\theta)D_{ij}\phi(x)\varrho^{ij}_{t,z,\theta}(x)\,dx\,d\theta                     \label{iJ}
\end{align}
with
\begin{align*}
\chi_{t,\theta,z}(x)=&\xi_{t,z}(\pi^{-1}_{t,z,\theta}(x))|
{\rm{det}}D\pi^{-1}_{t,z,\theta}(x)|, \\
\varrho^{ij}_{t,z,\theta}(x)
=&\eta^i_{t,z}(\tau^{-1}_{t,z,\theta}(x))
\eta^j_{t,z}(\tau_{t,z,\theta}(x)) |{\rm{det}}D\tau^{-1}_{t,z,\theta}(x)|. 
\end{align*}
Due to Assumptions \ref{assumption M} and \ref{assumption N}, 
using Corollary \ref{corollary epsilon} we have a constant $N=N(K,d)$ 
such that 
$$
|D\chi_{t,\theta,z}(x)|\leq N\bar\xi(z),\quad |D_{ij}\varrho^{ij}_{t,z,\theta}(x)|\leq N\bar\eta^2(z)
$$
for all $(t,z,\theta)\in[0,T]\times Z\times[0,1]$. Thus from \eqref{iI} and \eqref{iJ} by integration by parts 
we get \eqref{intI} and \eqref{intJ}. 
\end{proof}

Next we present a special case of Theorem 2.1 
from \cite{K} on the $L_p$-norm of semimartingales 
with values in Sobolev spaces, where we use the notation 
$D^{\ast}_{\alpha}=-D_k$ for $\alpha=k=1,2,...,d$, 
and $D^{\ast}_0=D_0$ stands for the identity operator.  

\begin{lemma}                                                                          \label{lemma Kr}
Let $\psi\in L_p(\bR^d)$, $u\in L_p([0,T],W^1_p(\bR^d))$ 
and  
$f^{\alpha}\in L_p([0,T],L_p(\bR^d))$ for some $p\geq2$, 
for $\alpha=0,1,...,d$,  
such that for each $\varphi\in C_0^{\infty}(\bR^d)$ 
$$
\int_{\bR^d}u(t)\varphi\,dx=\int_{\bR^d}\psi\varphi\,dx
+\int_0^t\int_{\bR^d} f^{\alpha}(s)
D_{\alpha}^{\ast}\varphi\,dx\,ds\quad\text{$dt$-almost everywhere}.
$$
Then, there is an $L_p(\bR^d)$-valued  continuous function 
$\tilde u$ such that $\tilde u(t)=u(t)$ for $dt$-a.e., and 
\begin{equation}                                                                                           \label{formula1}
|\tilde u_t|^p_{L_p}=|\psi|_{L_p}^p
+\int_0^t\int_{\bR^d}p|u(s)|^{p-2}u(s)f^0(s)-p(p-1)|u(s)|^{p-2}D_iu(s)f^i(s)\,dx\,ds
\end{equation}
for all $t\in[0,T]$, where the repeated index $i$ means summation over $i=1,2,...,d$. 
\end{lemma}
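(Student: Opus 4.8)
The plan is to obtain Lemma \ref{lemma Kr} as the deterministic specialisation of Theorem 2.1 in \cite{K}, and to record for completeness the short self-contained argument behind it. Theorem 2.1 of \cite{K} gives, for an $L_p$-valued semimartingale whose pairing with an arbitrary $\varphi\in C_0^\infty$ has an absolutely continuous drift of the form $\int_0^t(g^{\alpha}(s),D_{\alpha}^{\ast}\varphi)\,ds$ plus a stochastic integral against a martingale, a continuous modification together with an It\^o-type formula for its $L_p$-norm. In our situation there is no randomness: taking the filtered probability space to be trivial (equivalently, setting the stochastic integrand, hence the martingale part and its quadratic variation, to zero) and identifying the drift coefficients $g^{0},\dots,g^{d}$ of \cite{K} with our $f^{0},\dots,f^{d}$, so that $D^{\ast}_{\alpha}$ reproduces exactly the weak identity in the hypothesis, that theorem produces the continuous modification $\tilde u$ and collapses the It\^o formula to \eqref{formula1}. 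So the proof is a bookkeeping verification that our hypotheses meet those of \cite[Theorem 2.1]{K} and that the quadratic-variation correction terms there disappear.

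For the direct route I would first mollify in $x$: put $u^{(\varepsilon)}=S_\varepsilon u$, $\psi^{(\varepsilon)}=S_\varepsilon\psi$, $f^{\alpha,(\varepsilon)}=S_\varepsilon f^{\alpha}$. Testing the weak equation against $\varphi(\cdot)=\varepsilon^{-d}k((x-\cdot)/\varepsilon)$ shows that for each fixed $x$ the scalar function $t\mapsto u^{(\varepsilon)}(t,x)$ is absolutely continuous with $\partial_t u^{(\varepsilon)}=f^{0,(\varepsilon)}+D_i f^{i,(\varepsilon)}$. Since $r\mapsto|r|^p$ is $C^1$ for $p\ge2$ with derivative $p|r|^{p-2}r$, the classical chain rule gives $\partial_t|u^{(\varepsilon)}(t,x)|^p=p|u^{(\varepsilon)}|^{p-2}u^{(\varepsilon)}(f^{0,(\varepsilon)}+D_i f^{i,(\varepsilon)})$; integrating this in $x$ over $\bR^d$ (legitimate since, by the contraction property $|S_\varepsilon v|_{L_p}\le|v|_{L_p}$, the function $u^{(\varepsilon)}(t)=\psi^{(\varepsilon)}+\int_0^t\partial_s u^{(\varepsilon)}(s)\,ds$ is bounded in $W^1_p$ uniformly in $t$, while $s\mapsto|\partial_s u^{(\varepsilon)}(s)|_{L_p}$ lies in $L_1([0,T])$), and then integrating by parts in $x$ in the $D_i f^{i,(\varepsilon)}$ term to turn it into $-p(p-1)|u^{(\varepsilon)}|^{p-2}D_i u^{(\varepsilon)}f^{i,(\varepsilon)}$, yields \eqref{formula1} for the mollified data, for all $t$.

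It then remains to let $\varepsilon\downarrow0$. One has $\psi^{(\varepsilon)}\to\psi$ in $L_p$, $u^{(\varepsilon)}\to u$ in $L_p([0,T],W^1_p)$ and $f^{\alpha,(\varepsilon)}\to f^{\alpha}$ in $L_p(H_T)$; and since $r\mapsto|r|^{p-2}r$ is continuous and bounded in modulus by $|r|^{p-1}$, also $|u^{(\varepsilon)}|^{p-2}u^{(\varepsilon)}\to|u|^{p-2}u$ in $L_{p/(p-1)}(H_T)$ (and $|u^{(\varepsilon)}|^{p-2}\to|u|^{p-2}$ in $L_{p/(p-2)}(H_T)$ when $p>2$). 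H\"older's inequality passes the limit through both integrals on the right of \eqref{formula1}, so that right-hand side converges, for each $t$, to a function $F(t)$ continuous in $t$; and since $u^{(\varepsilon)}(t)\to u(t)$ in $L_p$ for a.e.\ $t$, we get $F(t)=|u(t)|_{L_p}^p$ for a.e.\ $t$. Finally, the weak equation together with the bound $\esssup_{t\le T}|u(t)|_{L_p}<\infty$ shows (after a modification) that $u$ is a weakly continuous $L_p$-valued function; a weakly $L_p$-continuous function whose norm coincides a.e.\ with the continuous function $F^{1/p}$ is strongly $L_p$-continuous, and along the resulting continuous modification $\tilde u$ the identity \eqref{formula1} holds for every $t\in[0,T]$.

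I expect the last step to be the main obstacle of the direct argument: promoting the $dt$-a.e.\ identity to one holding for all $t$ jointly with strong $L_p$-continuity of the modification. This uses (i) the soft fact, extracted from the weak equation and the uniform $L_p$-bound by a Lions--Magenes/Aubin-type argument, that $u$ has a weakly $L_p$-continuous modification, and (ii) the Radon--Riesz (Kadec--Klee) property of $L_p$ for $1<p<\infty$, namely that weak convergence together with convergence of norms implies strong convergence. A minor technical point is the justification of the $x$-integration by parts for the non-compactly-supported mollified functions, handled by a routine cut-off. Invoking \cite[Theorem 2.1]{K} bypasses both of these, which is presumably why the lemma is stated as a special case of that result.
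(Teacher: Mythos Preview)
Your proposal is correct and matches the paper exactly: the paper does not prove this lemma at all but simply presents it as a special case of Theorem~2.1 in \cite{K}, which is precisely the content of your first paragraph. The self-contained mollification argument you add on top of this is not in the paper and goes beyond what is required; it is essentially sound, and you have correctly identified the only genuinely delicate step, namely promoting the $dt$-a.e.\ identity to one holding for every $t$ together with strong $L_p$-continuity, via weak continuity of the modification plus the Kadec--Klee property of $L_p$.
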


The following lemma is a vector-valued version of a special case of 
Lemma 5.1 from \cite{K}. Its proof is a simple exercise left for the reader. 
\begin{lemma}                                                                          \label{lemma Kr multi}
Let $\psi^{\alpha}\in L_p$, 
$u^{\alpha}\in L_p([0,T],W^1_p(\bR^d))$ 
and  
$f^{\alpha}\in L_p([0,T],L_p(\bR^d))$ 
for some $p\geq2$,
for $\alpha\in A$ for a finite index set $A$, such that  
for each $\varphi\in C_0^{\infty}(\bR^d)$ and $\alpha\in A$ 
$$
\int_{\bR^d}u^{\alpha}(t)\varphi\,dx=\int_{\bR^d}\psi^{\alpha}\varphi\,dx
+\int_0^t\int_{\bR^d} f^{\alpha}(s)\varphi\,dx\,ds \quad\text{$dt$-almost everywhere}.
$$
Then for every $\alpha\in A$ there is an $L_p(\bR^d)$-valued  continuous function 
$\tilde u^{\alpha}$ on $[0,T]$, 
such that $\tilde u^{\alpha}(t)=u^{\alpha}(t)$ for $dt$-almost every $t\in[0,T]$, and 
\begin{equation}                                                               \label{formula2}
\|\tilde u_t\|^p_{L_p}=\|\psi\|_{L_p}^p
+\sum_{\alpha\in A}\int_0^t\int_{\bR^d}p|u(s)|^{p-2}u^{\alpha}(s)f^{\alpha}(s)\,dx\,ds
\end{equation}
holds for all $t\in[0,T]$, where 
$|\tilde u|:=\big(\sum_{\alpha}(\tilde u^\alpha)^{2}\big)^{1/2}$ 
and   $|u|:=\big(\sum_{\alpha}(u^\alpha)^{2}\big)^{1/2}$. 
\end{lemma}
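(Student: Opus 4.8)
The plan is to reduce the vector-valued identity \eqref{formula2} to an elementary chain-rule computation by mollifying in the space variable, and then to remove the mollification by a limiting argument. First I would fix $x\in\bR^d$ and test the hypothesis against $\varphi=k_\varepsilon(x-\cdot)$, where $k_\varepsilon(\cdot)=\varepsilon^{-d}k(\cdot/\varepsilon)$ is built from the kernel in \eqref{smoothing}; since $k_\varepsilon(x-\cdot)\in C_0^\infty(\bR^d)$, the assumed weak equation gives, for each fixed $x$,
$$
u^{\alpha,(\varepsilon)}(t,x)=\psi^{\alpha,(\varepsilon)}(x)+\int_0^t f^{\alpha,(\varepsilon)}(s,x)\,ds
\quad\text{for $dt$-a.e. $t$},
$$
where $v^{(\varepsilon)}=S_\varepsilon v$. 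Choosing the absolutely continuous representative in $t$, the map $t\mapsto u^{\alpha,(\varepsilon)}(t,x)$ is differentiable for a.e. $t$ with derivative $f^{\alpha,(\varepsilon)}(t,x)$, for each fixed $x$.

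Next I would apply the classical chain rule to $F(y)=|y|^p=(\sum_{\alpha\in A}y_\alpha^2)^{p/2}$ on $\bR^{A}$. For $p\ge 2$ this $F$ is continuously differentiable, with $\partial_{y_\alpha}F(y)=p|y|^{p-2}y_\alpha$ continuous everywhere (the apparent singularity at $y=0$ disappears because $p-2\ge 0$). Hence, for each fixed $x$,
$$
|u^{(\varepsilon)}(t,x)|^p=|\psi^{(\varepsilon)}(x)|^p+\int_0^t p|u^{(\varepsilon)}(s,x)|^{p-2}\sum_{\alpha\in A}u^{\alpha,(\varepsilon)}(s,x)f^{\alpha,(\varepsilon)}(s,x)\,ds,
$$
with $|u^{(\varepsilon)}|=(\sum_{\alpha}(u^{\alpha,(\varepsilon)})^2)^{1/2}$. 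Integrating this identity over $x\in\bR^d$, where Fubini is justified by the integrability of the right-hand side via H\"older's inequality and the memberships $u^{\alpha,(\varepsilon)},f^{\alpha,(\varepsilon)}\in L_p$, produces the mollified analogue of \eqref{formula2}.

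It then remains to let $\varepsilon\downarrow 0$. The smoothings satisfy $u^{\alpha,(\varepsilon)}\to u^\alpha$ and $f^{\alpha,(\varepsilon)}\to f^\alpha$ in $L_p([0,T]\times\bR^d)$ and $\psi^{\alpha,(\varepsilon)}\to\psi^\alpha$ in $L_p(\bR^d)$, so $\|u^{(\varepsilon)}_t\|_{L_p}^p\to\|u_t\|_{L_p}^p$ for a.e. $t$ and $\|\psi^{(\varepsilon)}\|_{L_p}^p\to\|\psi\|_{L_p}^p$. The delicate point is the nonlinear term: I would invoke the continuity of the superposition map $v\mapsto|v|^{p-2}v^\alpha$ from $L_p(\bR^{A})$ into $L_q$ with $q=p/(p-1)$ (its growth exponent $p-1$ matching $p/q$), so that $|u^{(\varepsilon)}|^{p-2}u^{\alpha,(\varepsilon)}\to|u|^{p-2}u^\alpha$ in $L_q([0,T]\times\bR^d)$; combined with $f^{\alpha,(\varepsilon)}\to f^\alpha$ in $L_p$ and H\"older's inequality, this yields convergence of the double integral. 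Thus \eqref{formula2} holds for $dt$-almost every $t$.

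Finally, to upgrade to all $t$ and to produce the continuous modifications, I would apply the scalar Lemma \ref{lemma Kr} to each component $u^\alpha$ separately, taking the derivative sources equal to zero and the zeroth-order source equal to $f^\alpha$; the hypothesis $u^\alpha\in L_p([0,T],W^1_p)$ is exactly what that lemma requires, and it furnishes an $L_p$-continuous representative $\tilde u^\alpha$ with $\tilde u^\alpha(t)=u^\alpha(t)$ for a.e. $t$. Then $t\mapsto|\tilde u(t)|$ is $L_p$-continuous, so the left-hand side of \eqref{formula2} is continuous in $t$, while the right-hand side is continuous as the integral of an $L_1([0,T])$ function. Since the two continuous functions agree for a.e. $t$, they coincide for every $t\in[0,T]$. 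I expect the only genuinely technical step to be the limit passage in the nonlinear term, namely the continuity of the superposition operator into $L_q$ together with the H\"older estimate; the remaining steps are routine mollification and continuity arguments.
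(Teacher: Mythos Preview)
Your proof is correct. The paper does not actually prove this lemma: it states that the result is a vector-valued version of a special case of Lemma~5.1 in \cite{K} and that ``its proof is a simple exercise left for the reader.'' Your mollification-plus-chain-rule argument, followed by the limit passage using the continuity of the Nemytskii map $v\mapsto|v|^{p-2}v^{\alpha}$ from $L_p$ into $L_{p/(p-1)}$, is exactly the natural way to carry out that exercise, and invoking the scalar Lemma~\ref{lemma Kr} componentwise to obtain the continuous modifications is the expected final step.
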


\mysection{ $L^p$ estimates}                                           \label{section estimates}

In this section we prove estimate \eqref{estimatesup} 
for $p=2^k$ and $m=n$ for integers $k\geq1$, 
and $n\geq0$, provided  Assumptions \ref{assumption L}, \ref{assumption M} 
and \ref{assumption N} hold with sufficiently high $m$.  
To this end we use the formula
$$
\|D^nu(t)\|^p_{L_p}=\|D^n\psi\|^p_{L_p}
$$
\begin{equation}                                                                           \label{Lp}
+p\int_0^t
\sum_{|\alpha|=n}
(|D^nu|^{p-2}D_{\alpha}u(s),D_{\alpha}\cA u(s)
+D_{\alpha}f(s))\,ds
\end{equation}
for $W^{n+2}_p$-valued solutions $u$ to equation \eqref{eq1},   
which we obtain by an application of Lemma \ref{lemma Kr multi} 
with $\psi^{\alpha}$, $D_{\alpha}u$ and $D_{\alpha}\cA u+D_{\alpha}f$ in place of 
$\psi^{\alpha}$, $u^{\alpha}$ and $f^{\alpha}$, respectively. 

To estimate the right-hand side of \eqref{Lp}, the crucial result is the following. 
\begin{theorem}                                                                      \label{theorem crucial}
Let Assumptions \ref{assumption L}, \ref{assumption M} 
and \ref{assumption N} hold. Then for $p=2^k$, $n=0,1,...,m$
$$
\sum_{|\alpha|=n}(|D^nv|^{p-2}D_{\alpha}v,D_{\alpha}\cA v)\leq N|v|^p_{W^n_p}
$$
for all $v\in W^{n+2}_p$ with a constant $N=N(d,p,m,K,K_{\xi},K_{\eta})$.  
\end{theorem}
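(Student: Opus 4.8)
The plan is to split $\cA=\cL+\cM+\cN+\cR$ and estimate the four contributions to $\sum_{|\alpha|=n}(|D^nv|^{p-2}D_\alpha v,D_\alpha\cA v)$ separately. Throughout I would use that, since $p=2^k$, the weight $|D^nv|^{p-2}=(|D^nv|^2)^{2^{k-1}-1}$ is a polynomial in the entries of $D^nv$, so that $|D^nv|^{p-2}D_\alpha v$ is a smooth function of the derivatives of $v$ and all the integrations by parts below are legitimate (first for $v\in C_0^\infty$, then for $v\in W^{n+2}_p$ by density and the estimates of Section \ref{section preliminaries}). The term with $\cR$ is immediate: by Assumption \ref{assumption R} and H\"older's inequality, $\sum_{|\alpha|=n}(|D^nv|^{p-2}D_\alpha v,D_\alpha\cR v)\le|D^nv|_{L_p}^{p-1}|D^n\cR v|_{L_p}\le K|v|_{W^n_p}^p$.

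For $\cL$ I would write it in divergence form $\cL v=D_i(a^{ij}D_jv)+\bar b^iD_iv+cv$ with $\bar b^i=b^i-D_ja^{ij}$, whose $x$-derivatives up to order $n$ are bounded by Assumption \ref{assumption L}. The zero-order term $cv$ and every Leibniz remainder in which at least one derivative falls on a coefficient is of order $\le n$ in $v$ and is bounded directly by $N|v|_{W^n_p}^p$ via H\"older; the leading first-order term, after summing over $\alpha$, equals $\tfrac1p\int\bar b^iD_i(|D^nv|^p)\,dx=-\tfrac1p\int(D_i\bar b^i)|D^nv|^p\,dx\le N|v|_{W^n_p}^p$. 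The only delicate piece is the principal part $\sum_\alpha(|D^nv|^{p-2}v_\alpha,D_iD_\alpha(a^{ij}D_jv))$: integrating the outer $D_i$ by parts and expanding $D_\alpha(a^{ij}D_jv)$ by the Leibniz rule, the two terms arising from the top-order part $a^{ij}D_jv_\alpha$ are $-\int|D^nv|^{p-2}a^{ij}v_{i\alpha}v_{j\alpha}\,dx$ and $-(p-2)\int|D^nv|^{p-4}\zeta_ia^{ij}\zeta_j\,dx$, where $\zeta_i:=\sum_{|\alpha|=n}v_\alpha v_{i\alpha}=\tfrac12D_i|D^nv|^2$, and both are $\le0$ by $a^{ij}=a^{ji}$ and the degeneracy condition \eqref{a} (using $p\ge2$); the remaining terms, in which one derivative falls on $a^{ij}$, are a priori of order $n+1$ in $v$ and require a second integration by parts and a careful cancellation, again using $a^{ij}=a^{ji}$ and the bounds on the second derivatives of $a^{ij}$, to be reduced to $\le N|v|_{W^n_p}^p$.

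For the first-order integral operators --- $\cN$ and, via Remark \ref{remark ibp}, the $\cJ^l$, but also the first-order pieces produced below by $\cM$ --- the key tools are the change-of-variables bound $\int_{\R^d}|D^nv(\tau_{\theta\xi_{t,z}}(x))|^p\,dx\le N|D^nv|_{L_p}^p$ from Corollary \ref{corollary epsilon}, together with Young's inequality. Writing $\cN v=\int_Z(v(\tau_\xi)-v)\,\nu(dz)$ and using the chain rule (Lemma \ref{lemma chainrule2}) to write $D_\alpha(v(\tau_\xi))=v_\alpha(\tau_\xi)+r_\alpha$, where $r_\alpha$ is a sum of terms each of order $\le n$ in $v$ carrying a factor $D^{\ge1}\xi\le\bar\xi(z)\wedge K$, the remainder $r_\alpha$ contributes at most $N\int_Z\bar\xi(z)\,\nu(dz)\,|D^nv|_{L_p}^{p-1}|v|_{W^n_p}\le NK_\xi|v|_{W^n_p}^p$ (the shifted points being absorbed by the change of variables); and the main part satisfies, by Young's inequality applied to $|D^nv|^{p-1}|D^nv(\tau_\xi)|$,
\[
\sum_{|\alpha|=n}\int_{\R^d}|D^nv|^{p-2}v_\alpha(v_\alpha(\tau_\xi)-v_\alpha)\,dx
\le\frac1p\int_{\R^d}|D^nv(y)|^p\big(|\det D\tau_\xi^{-1}(y)|-1\big)\,dy
\le\frac Np\,\bar\xi(z)\,|D^nv|_{L_p}^p,
\]
since $|\det D\tau_{\xi_{t,z}}^{-1}-1|\le N\bar\xi(z)$ because the Jacobian is within $N|D\xi|$ of the identity and is bounded below by Assumption \ref{assumption N}; integrating in $\nu(dz)$ gives $\le NK_\xi|v|_{W^n_p}^p$. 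The same scheme, with $\bar\eta^2$ in place of $\bar\xi$ and $K_\eta^2$ in place of $K_\xi$, will handle the $\cJ^l$ and the first-order leftovers from $\cM$.

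Finally, $\cM$ I would treat as a (degenerate) second-order operator with nonnegative-definite ``coefficient matrix'' $\int_Z\eta^i\eta^j\,\mu(dz)$: keep $\cM v=\int_Z(v(\tau_\eta)-v-\eta^iD_iv)\,\mu(dz)$ in this form, write $D_\alpha\cM v=\cM(v_\alpha)+(\text{commutator})$, and check that the commutator is a sum of terms each carrying a factor $\bar\eta^2(z)$ (the two ``missing'' $\eta$'s being recovered, one from a factor $D^{\ge1}\eta\le\bar\eta\wedge K$ and one from a Taylor increment that is $\bar\eta$-small), which are disposed of exactly as the first-order pieces above using the change of variables and $\int_Z\bar\eta^2\,\mu(dz)=K_\eta^2<\infty$. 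For the principal part $\sum_\alpha(|D^nv|^{p-2}v_\alpha,\cM(v_\alpha))$ I would expand $v_\alpha(\tau_\eta)$ by Taylor to first order: the compensator $-\eta^iD_iv_\alpha$ cancels exactly the resulting drift $\tfrac1p\int(D_i\eta^i)|D^nv|^p\,dx$ (this cancellation is what makes the operator second order), leaving a remainder of order $n+1$ or $n+2$ in $v$ evaluated at $\tau_{\theta\eta}$ and weighted by $\bar\eta^2$; undoing the shift by the change of variables and integrating by parts, the genuinely second-order-in-$D^nv$ part survives with the correct sign by nonnegative-definiteness of $\int\eta^i\eta^j\,\mu(dz)$ (as for $a^{ij}$ above), and the rest is bounded by $N|v|_{W^n_p}^p$. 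I expect the main obstacle to be precisely this bookkeeping for $\cL$ and $\cM$: controlling the terms in which almost all $n$ derivatives land on $v$, showing that after the integrations by parts and changes of variables they either cancel or reduce to $\le N|v|_{W^n_p}^p$ while only the principal second-order-in-$D^nv$ terms remain --- with the crucial non-positive sign that \eqref{a}, and its analogue for $\mu$, guarantees, this sign being indispensable because no ellipticity is available to absorb such terms.
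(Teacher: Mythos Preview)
Your decomposition, the treatment of $\cR$, and your Young-inequality/change-of-variables argument for $\cN$ are fine; the $\cN$ argument is in fact different from the paper's (which iterates the identity $2vIv=Iv^2-(Iv)^2$) and is a nice alternative. But there are two genuine gaps.

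\textbf{The $\cL$ part.} The term in which exactly one derivative hits $a^{ij}$ is not reducible to $\le N|v|_{W^n_p}^p$ by ``a second integration by parts and a careful cancellation''. For $p>2$ the integration by parts against the weight produces, via $D_i(|D^nv|^{p-2})=(p-2)|D^nv|^{p-4}\zeta_i$ with $\zeta_i=\tfrac12 D_i|D^nv|^2$, extra terms that still contain a single factor of $D^{n+1}v$ and cannot be cancelled by symmetry of $a$ alone. The paper uses the Oleinik--Radkevi\v{c} pointwise inequality for nonnegative matrix functions with bounded second derivatives,
\[
|D_la^{ij}V^{ij}|^2\le N\,a^{ij}V^{ik}V^{jk},
\]
applied with $V^{ij}=D_{ij}v_{\bar\alpha(l)}$, together with Young's inequality, to \emph{absorb} these terms into the good negative principal term $-p\int|D^nv|^{p-2}a^{ij}v_{i\alpha}v_{j\alpha}\,dx$. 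Without this (or an equivalent device) the $\cL$ estimate does not close in the degenerate case.

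\textbf{The $\cM$ part.} The commutator $D_\alpha J v-Jv_\alpha$ contains, at top order, terms of the form $p^{\alpha,\beta}I^\eta v_\beta$ with $|\beta|=n$, where $p^{\alpha,\beta}$ is linear in $D\eta$ and hence bounded only by $\bar\eta(z)$, not $\bar\eta^2(z)$. If you keep $I^\eta v_\beta$ as is, you get $|D^nv|^{p-1}\cdot\bar\eta\cdot|v|_{W^n_p}$, which is not $\mu$-integrable (only $\int\bar\eta^2\,\mu<\infty$ is assumed); if you Taylor-expand it to gain the second $\bar\eta$, you raise the order to $n+1$ and again cannot bound by $|v|_{W^n_p}^p$. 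The paper resolves this by extracting from the principal part, via the iterated identity $2wJw=Jw^2-(I^\eta w)^2$ (this is where $p=2^k$ is essential), the negative remainder $-\tfrac12|D^nv|^{p-2}\sum_{|\alpha|=n}|I^\eta v_\alpha|^2$, and then using Young's inequality $|v_\alpha p^{\alpha,\beta}I^\eta v_\beta|\le\varepsilon|I^\eta v_\beta|^2+\varepsilon^{-1}|p^{\alpha,\beta}|^2|v_\alpha|^2$ to absorb the $|\beta|=n$ commutator into it. Your Taylor/nonnegative-definiteness plan for the principal part, even if it could be made to work despite the argument mismatch between $|D^nv|(x)$ and $D^{n+2}v(\tau_{\theta\eta}(x))$, does not produce this negative term, so the top-order commutator pieces remain uncontrolled.
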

We prove this theorem after some lemmas.
\begin{lemma}                                                                  \label{lemma L}
Let Assumption \ref{assumption L} hold. Then for $p=2^k$ for $n=0,1,...,m$
\begin{equation}                                                               \label{estimate L}
Q(v):=\sum_{|\alpha|=n}(|D^nv|^{p-2}D_{\alpha}v,D_{\alpha}\cL v)\leq N|v|^p_{W^n_p}
\end{equation}
for all $v\in W^{n+2}_p$ with a constant $N=N(d,p,m,K)$.  
\end{lemma}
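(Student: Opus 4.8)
The plan is as follows. By density of $C_0^\infty$ in $W^{n+2}_p$, and since both sides of \eqref{estimate L} depend continuously on $v\in W^{n+2}_p$ (the pairing in $Q(v)$ makes sense because Assumption \ref{assumption L} and the Leibniz rule give $\cL v\in W^n_p$ with $|\cL v|_{W^n_p}\le N|v|_{W^{n+2}_p}$, while $|D^nv|^{p-2}D_\alpha v$ is dominated by $|D^nv|^{p-1}\in L_{p/(p-1)}$), it suffices to prove \eqref{estimate L} for $v\in C_0^\infty$. The point of the hypothesis $p=2^k$ is that then $|D^nv|^{p-2}=\big(\sum_{|\alpha|=n}(D_\alpha v)^2\big)^{2^{k-1}-1}$, and all the other powers of $|D^nv|$ occurring below, are smooth functions, so every integration by parts performed in the argument is classical and produces no boundary terms.

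Write $\Phi:=|D^nv|^{p-2}\ge0$ and split $\cL v=a^{ij}D_{ij}v+b^iD_iv+cv$. By the Leibniz rule $D_\alpha(b^iD_iv)=b^iD_iD_\alpha v+r^{(1)}_\alpha$ and $D_\alpha(cv)=cD_\alpha v+r^{(0)}_\alpha$, where $r^{(1)}_\alpha,r^{(0)}_\alpha$ are sums of products of a derivative of $b^i$ or $c$ of order $\ge1$ (bounded by $K$, Assumption \ref{assumption L}) with an $x$-derivative of $v$ of order $\le n$; paired with $\Phi D_\alpha v$ these contribute at most $N\int\big(\sum_{j\le n}|D^jv|\big)^p\,dx\le N|v|_{W^n_p}^p$ by Hölder's inequality. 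For the leading terms one uses $\sum_{|\alpha|=n}D_\alpha v\,D_iD_\alpha v=\tfrac12D_i|D^nv|^2$, so that $\Phi\sum_{|\alpha|=n}D_\alpha v\,D_iD_\alpha v=\tfrac1pD_i|D^nv|^p$: after one integration by parts the $b$-term equals $-\tfrac1p\int(D_ib^i)|D^nv|^p\,dx$ and the $c$-term is $\int c|D^nv|^p\,dx$, both $\le N|v|_{W^n_p}^p$.

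For the second-order part, Leibniz gives $D_\alpha(a^{ij}D_{ij}v)=a^{ij}D_{ij}D_\alpha v+\sum_{r=1}^{n}(D_{\alpha_r}a^{ij})D_{ij}D_{\alpha(\bar n\setminus\{r\})}v+(\text{terms with}\ge2\text{ derivatives on }a^{ij})$, the last group carrying only $v$-derivatives of order $\le n$ and so contributing $\le N|v|_{W^n_p}^p$. The term $\sum_{|\alpha|=n}(\Phi D_\alpha v,a^{ij}D_{ij}D_\alpha v)$ is integrated by parts twice (once in $x^i$; then, in the term $D_\alpha v\,D_jD_\alpha v=\tfrac12D_j(D_\alpha v)^2$, once in $x^j$): this extracts the two manifestly nonpositive quadratic forms $-\int\Phi\,a^{ij}\sum_{|\alpha|=n}(D_iD_\alpha v)(D_jD_\alpha v)\,dx\le0$ and, after summation over $\alpha$, $-(p-2)\int|D^nv|^{p-4}a^{ij}\zeta_i\zeta_j\,dx\le0$, where $\zeta_i:=\sum_{|\beta|=n}D_\beta v\,D_iD_\beta v$ and both are $\le0$ since $\Phi\ge0$ and $(a^{ij})\ge0$ (Assumption \ref{assumption L}), together with a divergence whose integral over $\bR^d$ vanishes and terms in which a bounded second derivative of $a^{ij}$ (allowed, because $\max\{\lceil m\rceil,2\}\ge2$) meets $v$-derivatives of order $\le n$, all $\le N|v|_{W^n_p}^p$.

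The main obstacle is the middle group $B:=\sum_{|\alpha|=n}\sum_{r=1}^{n}\big(\Phi D_\alpha v,(D_{\alpha_r}a^{ij})D_{ij}D_{\alpha(\bar n\setminus\{r\})}v\big)$: the equation may be degenerate, so there is no ellipticity to absorb errors, yet $B$ still carries $(n+1)$st-order derivatives of $v$. My plan is to re-index the double sum over $(\alpha,\alpha_r)$ as $n\sum_{|\gamma|=n-1}\sum_{l=1}^{d}$ over $(\gamma,l)$ with $D_\alpha v=D_lD_\gamma v$, integrate by parts in $x^i$, and then use the symmetry $a^{ij}=a^{ji}$ to rewrite the unsymmetric product $(D_lD_iD_\gamma v)(D_jD_\gamma v)$ as $\tfrac12D_l\big((D_iD_\gamma v)(D_jD_\gamma v)\big)$ and integrate by parts in $x^l$. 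For $p=2$ this already reduces $B$ to a divergence plus terms with a bounded second derivative of $a^{ij}$ times $v$-derivatives of order $\le n$, so $B\le N|v|_{W^n_p}^p$. For $p>2$ the reduction leaves pieces containing the factor $D_i\Phi=(p-2)|D^nv|^{p-4}\zeta_i$; to close the estimate here I expect to invoke the elementary convexity bound $|\nabla\phi|^2\le2\,\|D^2\phi\|_\infty\,\phi$, valid for any nonnegative $\phi\in C^2$, applied to $\phi=a^{ij}\xi_i\xi_j$ — which dominates the first derivatives of $a^{ij}$ by $\sqrt{a^{ij}}$ and thus plays the role the non-degeneracy would otherwise play — and then bound those pieces by Cauchy--Schwarz and Young, absorbing the quadratic form $\int\Phi\,a^{ij}\sum_{|\alpha|=n}(D_iD_\alpha v)(D_jD_\alpha v)\,dx$ obtained above and leaving a remainder $\le N\int|D^nv|^p\,dx$. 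Assembling the pieces yields $Q(v)\le N|v|_{W^n_p}^p$ with $N=N(d,p,m,K)$; the case $n=0$ is the same computation with $\alpha$ the empty multi-number and $\zeta_i=vD_iv$.
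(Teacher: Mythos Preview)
Your overall strategy and the key tools are the same as the paper's: integration by parts, nonnegativity of $(a^{ij})$, and the Oleinik-type inequality $|\nabla\phi|^2\le 2\|D^2\phi\|_\infty\,\phi$ for $\phi=a^{ij}\xi_i\xi_j\ge 0$. The treatment of the $b$- and $c$-terms, of the principal piece $\sum_{|\alpha|=n}(\Phi D_\alpha v,a^{ij}D_{ij}D_\alpha v)$, and of the Leibniz remainders with $\ge 2$ derivatives on $a$ is correct and matches the paper. The difference, and the gap, is in how you handle the cross term $B=\sum_{|\alpha|=n}\sum_{r}(\Phi D_\alpha v,(D_{\alpha_r}a^{ij})D_{ij}D_{\alpha(\bar n\setminus\{r\})}v)$.

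Your plan is to integrate $B$ by parts in $x^i$, symmetrise, and integrate by parts in $x^l$. For $p=2$ this indeed reduces $B$ to terms with $D^2a$ and $|D^nv|^2$ only. For $p>2$, however, each integration by parts produces a residual term carrying $D_i\Phi=(p-2)|D^nv|^{p-4}\zeta_i$ with $\zeta_i=\sum_{|\beta|=n}D_\beta v\,D_iD_\beta v$; after your manipulations one such residual is, up to constants, $\int |D^nv|^{p-4}\zeta_i\,(D_l a^{ij})\,(D_lD_\gamma v)(D_jD_\gamma v)\,dx$. Here the convexity bound controls $(D_la^{ij})$ only through contractions of the form $(D_la^{ij})\xi_i\xi_j$, and in this term the $i,j$ indices are contracted with the \emph{mixed} pair $(\zeta_i,D_jD_\gamma v)$, not a single vector. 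Polarising and applying convexity (or, equivalently, the matrix Oleinik inequality with the symmetric part of $\zeta\otimes DD_\gamma v$) leaves a piece of size $|D^nv|^{p-2}|\zeta|$, where $|\zeta|$ carries an \emph{unweighted} $(n{+}1)$st derivative. In the degenerate case this cannot be absorbed into either good term, since both $\int\Phi\,a^{ij}\sum_\alpha D_iD_\alpha v\,D_jD_\alpha v$ and $\int|D^nv|^{p-4}a^{ij}\zeta_i\zeta_j$ only control the $a$-weighted quantities. So the closure you ``expect'' does not go through along this route.

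The paper avoids this by \emph{not} integrating $B$ by parts at all. It applies Young's inequality directly,
\[
|D_\alpha v\cdot (D_{\alpha_r}a^{ij})D_{ij}D_{\bar\alpha(r)}v|
\le \varepsilon^{-1}|D_\alpha v|^2+\varepsilon\,|(D_{\alpha_r}a^{ij})D_{ij}D_{\bar\alpha(r)}v|^2,
\]
and then invokes the \emph{matrix} Oleinik inequality $|D_la^{ij}V^{ij}|^2\le N\,a^{ij}V^{ik}V^{jk}$ (valid for symmetric $V$ and following from your scalar bound by diagonalising the Hessian $V=D^2D_{\bar\alpha(r)}v$). This yields $\sum|(D_{\alpha_r}a^{ij})D_{ij}D_{\bar\alpha(r)}v|^2\le N'\sum_{|\alpha|=n}a^{ij}D_iD_\alpha v\,D_jD_\alpha v$, which is exactly the integrand of the good term and is absorbed for small $\varepsilon$. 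Replacing your integration-by-parts scheme for $B$ with this direct Young--Oleinik step fixes the argument.
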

\begin{proof}
This lemma can be obtained from general estimates
given in \cite{GGK}. Here we give a direct proof of it. 
For functions $g$ and $h$ on $\bR^d$ we write 
$
g\sim h
$
if they have identical integrals over $\mathbb R^d$,  and 
we write $g\preceq h$ if $g\sim h+\tilde h$ such that 
the integral of $\tilde h$ over $\bR^d$ can be estimated by the right-hand side of 
\eqref{estimate L}. 
Consider first the case $n=0$. It is easy to see that 
\begin{align*}
p|v|^{p-2}
v \cL v&\preceq p|v|^{p-2}v(a^{ij}v_{ij}+b^iv_i)                                   \nonumber\\
&\sim- p(p-1)|v|^{p-2}a^{ij}v_{i}v_{j}-a^{ij}_j(|v|^{p})_i+(|v|^p)_ib^i      \nonumber \\
&\sim- p(p-1)|v|^{p-2}a^{ij}v_{i}v_{j}+(a^{ij}_{ji}-b^i_i)|v|^p                  \nonumber\\
&\preceq  -p(p-1)|v|^{p-2}a^{ij}v_{i}v_{j},                                                                    \label{1.18.11.13}
\end{align*}
where, and later on, we use the notation $g_{\alpha}:=D_{\alpha}g$ for functions $g$ over 
$\bR^d$ and multi-numbers $\alpha=\alpha_1\dots\alpha_n$. 
This by virtue  of Assumption \ref{assumption L} proves \eqref{estimate L} when $n=0$. 
Let us now estimate $Q$ when $n\geq1$. Then it is easy to see that 
$$
A:=p|D^nv|^{p-2}\sum_{|\alpha|=n}
v_{\alpha}D_{\alpha}\cL v
$$
\begin{equation}                                                                 \label{1.21.13}
\preceq p|D^nv|^{p-2}\sum_{|\alpha|=n}(
v_{\alpha}a^{ij}v_{\alpha ij}+
\sum_{l=1}^nv_{\alpha}a^{ij}_{\alpha(l)}D_{ij}v_{\bar\alpha(l)}
+v_{\alpha}b^iv_{\alpha i}), 
\end{equation}
where $\alpha(l)$ denotes the $l$-th element of multi-number $\alpha$, and 
$\bar\alpha(l)$ is the multi-number we get from $\alpha$ by leaving out its $l$-th 
element. 
Notice that 
\begin{equation*}                                           
2v_{\alpha}a^{ij}v_{ij\alpha}
=a^{ij}[|D^nv|^2]_{ij}-2a^{ij}v_{i\alpha}v_{j\alpha}, 
\quad2v_{\alpha}b^{i}v_{i\alpha}=b^{i}(|D^nv|^2)_i. 
\end{equation*}
Hence integrating by parts and using Assumption \ref{assumption L}, 
with $c_p=p(p-2)/4\geq0$ we have 
$$
p|D^nv|^{p-2}\sum_{|\alpha|=n}
v_{\alpha}a^{ij}v_{\alpha ij}
=                                 
\tfrac{p}{2}|D^nv|^{p-2}(a^{ij}[|D^nv|^2]_{ij}
-2a^{ij}v_{i\alpha}v_{j\alpha} )
$$
$$                   
\sim 
-c_p|Dv|^{p-4}a^{ij}[|D^nv|^2]_i[|D^nv|^2]_j   
-\tfrac{p}{2}a^{ij}_j|D^nv|^{p-2}[|D^nv|^2]_i                      
-p|D^nv|^{p-2}a^{ij}v_{i\alpha}v_{j\alpha}                  
$$
\begin{equation}                                                                       \label{4.14.6.13} 
\preceq a^{ij}_{ji}|D^nv|^{p}-p|D^nv|^{p-2}a^{ij}v_{i\alpha}v_{j\alpha}                                         
\preceq                          
-p|D^nv|^{p-2}a^{ij}v_{i\alpha}v_{j\alpha},                              
\end{equation}
and 
\begin{equation}                                                                         \label{1.23.11.13}
p|D^nv|^{p-2}\sum_{|\alpha|=n}
v_{\alpha}b^{i}v_{i\alpha}= \tfrac{p}{2}|D^nv|^{p-2}b^i
(|D^nv|^2)_i = b^i(|D^nv|^{p})_i \sim-b^i_i|D^nv|^p \preceq0.                               
\end{equation} 
Taking into account \eqref{4.14.6.13} and 
\eqref{1.23.11.13}, 
from \eqref{1.21.13} we get   
\begin{equation}                                                                        \label{A}
A\preceq -p|D^nv|^{p-2}
a^{ij}v_{i\alpha}v_{j\alpha}+B                                             
\end{equation}
with
$$
B:=p|D^nv|^{p-2}\sum_{|\alpha|=n}
v_{\alpha}\sum_{l=1}^{n}
a^{ij}_{\alpha(l)}D_{ij}v_{\bar\alpha(l)}.
$$
We estimate $B$ by using the simple inequality
$$
|v_{\alpha}\sum_{l=1}^{n}
a^{ij}_{\alpha(l)}D_{ij}v_{\bar\alpha(l)}|
\leq \varepsilon^{-1}|v_{\alpha}|^2
+\varepsilon n\sum_{l=1}^{n}|a^{ij}_{\alpha(l)}D_{ij}v_{\bar\alpha(l)}|^2
$$
 for every $\varepsilon>0$ and multi-number $\alpha$, to get 
\begin{equation}                                                     \label{epsilon}
B\leq p\varepsilon^{-1} |D^nv|^{p}
+\varepsilon {n} p|D^nv|^{p-2}C
\quad
\text{with}
\quad 
C:=\sum_{|\alpha|=n}\sum_{l=1}^{n}
|a^{ij}_{\alpha(l)}D_{ij}v_{\bar\alpha(l)}|^2. 
\end{equation}
It is well-known, see e.g. \cite{OR2},  that for symmetric matrices 
$V\in\bR^{d\times d}$ and functions 
$a=(a^{ij})$ mapping $\bR^d$ into the space of 
 symmetric non-negative definite $d\times d$ matrices, such that the second order derivatives 
of $a$ are bounded by a constant $L$, 
the inequality 
\begin{equation*}                                                     
|D_la^{ij}V^{ij}|^2\leq N a^{ij}V^{ik}V^{jk}
\end{equation*}
holds for any $l\in\{1,2,...,d\}$, where $N$ is a constant depending only on $L$ and $d$.   
Using this   with $V^{ij}:=D_{ij}v_{\bar\alpha(l)}$ 
for each $l=1,2,...,n$ and multi-number $\alpha$ of length $n$, we get  
$$
C\leq N\sum_{|\alpha|=n}\sum_{l=1}^{n}
a^{ij}D_{ik}v_{\bar\alpha(l)}D_{jk}v_{\bar\alpha(l)}
\leq N'\sum_{|\alpha|=n}
a^{ij}D_{i}v_{\alpha}D_{j}v_{\alpha}
$$
with a constant $N'=N'(d,K,n)$. Thus, choosing $\varepsilon$ sufficiently small in 
\eqref{epsilon}, from \eqref{A} we obtain $A\preceq 0$, which proves the lemma. 
\end{proof}

For the following lemmas recall the definition of the operators 
$I=I_{t,z}$ and $J=J_{t,z}$ by \eqref{def IJ}, and notice 
that the 
 identities 
\begin{equation}                                                \label{identity I}
2vIv=Iv^2-(Iv)^2
\end{equation}
\begin{equation}                                                \label{identity J}
2vJv=Jv^2-(I^{\eta}v)^2
\end{equation} 
hold for $(t,x,z)\in H_T\times Z$, 
for functions $v=v(x)$ of $x\in\bR^d$, where 
\begin{equation*}                                                                \label{S}
I^{\eta}v(x)=v(x+\eta(x))-v(x). 
\end{equation*}
\begin{lemma}                                                                  \label{lemma N}
Let Assumption \ref{assumption N} hold. 
Then for $p=2^k$ for $n=0,1,...,m$
\begin{equation}                                                               \label{estimate I}
{\mathcal I}(v)
:=\sum_{|\alpha|=n}(|D^nv|^{p-2}D_{\alpha}v,D_{\alpha}Iv)
\leq N{\bar\xi}|v|^p_{W^n_p}
\end{equation}
for all $v\in W^{n+1}_p$ with a constant $N=N(d,p,m,K)$.  
\end{lemma}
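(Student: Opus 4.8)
The plan is to estimate $\mathcal I(v)$ by reducing it, via the Leibniz rule, to a sum of terms each of which is controlled by a "diagonal" term of the form $\sum_{|\alpha|=n}(|D^nv|^{p-2}D_\alpha v, D_\alpha Iv)$ where all derivatives land on the outer copy of $v$, plus lower-order remainders that are trivially bounded by $N\bar\xi|v|^p_{W^n_p}$ using the estimates of Lemma \ref{lemma IJ} (or its Corollary) and Young's inequality. First I would write, for a multi-number $\alpha$ of length $n$, the expansion of $D_\alpha(Iv) = D_\alpha\big(v(\tau_{\xi})-v\big)$ by the chain rule of Lemma \ref{lemma chainrule} (valid on $W^{n+1}_p$ by Lemma \ref{lemma chainrule2} and Proposition \ref{proposition tau}): the top-order term is $v_\beta(\tau_\xi)\,(\tau_\xi)^{\beta_1}_{\alpha_{l_1}}\cdots$ with a single block $\kappa=\bar n$, giving $v_i(\tau_\xi)(\tau_\xi)^i_\alpha$; but since $(\tau_\xi)^i = x^i + \xi^i$, we have $(\tau_\xi)^i_\alpha = \delta$-type terms only when $|\alpha|=1$, and for $|\alpha|=n\ge 1$ the single-block term is $v_i(\tau_\xi)(\delta^i_{\alpha}\text{-contribution} + \xi^i_\alpha)$. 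The cleanest route is to isolate the term $(v_\alpha)(\tau_\xi) = D_\alpha v$ evaluated at $\tau_\xi$, i.e. write $D_\alpha(Iv) = (D_\alpha v)(\tau_\xi) - D_\alpha v + (\text{remainder } R_\alpha)$, where $R_\alpha$ collects all chain-rule terms in which at least one derivative falls on $\xi$; by Assumption \ref{assumption N}, $|D^k\xi|\le\bar\xi\wedge K$ for $1\le k\le m_\xi$, so $R_\alpha$ is a sum of products involving $v_\beta(\tau_\xi)$ with $|\beta|\le n$ and at least one factor $\bar\xi\wedge K \le K^{1/2}\bar\xi^{1/2}$, hence $\int |D^nv|^{p-2}|D_\alpha v||R_\alpha|\,dx \le N\bar\xi\,|v|^p_{W^n_p}$ by Hölder and Lemma \ref{lemma chainrule2}(i).

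For the remaining "main" term, set $\tilde I v := (D_\alpha v)(\tau_\xi) - D_\alpha v$ summed appropriately; more precisely it suffices to bound $\sum_{|\alpha|=n}\big(|D^nv|^{p-2}D_\alpha v,\ (D_\alpha v)(\tau_\xi) - D_\alpha v\big)$. Writing $w_\alpha := D_\alpha v$ and $w := (w_\alpha)_{|\alpha|=n}$, with $|w|=|D^nv|$, this is $\sum_\alpha \int |w|^{p-2} w_\alpha\big(w_\alpha(\tau_\xi) - w_\alpha\big)\,dx$. Here I would use the pointwise identity \eqref{identity I} applied coordinate-wise, or rather its vector analogue: $2\sum_\alpha |w|^{p-2}w_\alpha(w_\alpha(\tau_\xi)-w_\alpha) \le |w|^{p-2}\big(\sum_\alpha w_\alpha(\tau_\xi)^2 - \sum_\alpha w_\alpha^2\big)$ after discarding the nonnegative square term $-\sum_\alpha(w_\alpha(\tau_\xi)-w_\alpha)^2 \le 0$ — wait, one must be careful, since $|w|^{p-2}$ multiplies; for $p=2^k$ the function $r\mapsto r^{p/2}$ is smooth and convex, and the right move is: $\sum_\alpha w_\alpha(w_\alpha(\tau_\xi) - w_\alpha) \le \tfrac12(|w(\tau_\xi)|^2 - |w|^2)$ pointwise by Cauchy–Schwarz, hence $|w|^{p-2}\sum_\alpha w_\alpha(w_\alpha(\tau_\xi)-w_\alpha) \le \tfrac12 |w|^{p-2}(|w(\tau_\xi)|^2 - |w|^2) = \tfrac12|w|^{p-2}|w(\tau_\xi)|^2 - \tfrac12|w|^p$. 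By Young's inequality with exponents $p/(p-2)$ and $p/2$, $|w|^{p-2}|w(\tau_\xi)|^2 \le \tfrac{p-2}{p}|w|^p + \tfrac2p|w(\tau_\xi)|^p$, so after integrating and changing variables $y=\tau_\xi(x)$ (using $|\det D\tau_\xi^{-1}|\le N$ from Corollary \ref{corollary epsilon}) we get $\int |w(\tau_\xi)|^p\,dx \le N\int|w|^p\,dy = N|D^nv|^p_{L_p}$, and the two $|w|^p$ terms combine to leave something bounded by $N|v|^p_{W^n_p}$. This, however, only yields the bound with constant $N$, not the factor $\bar\xi$; so the cancellation $-\tfrac12|w|^p$ must be exploited more carefully, replacing $w_\alpha(\tau_\xi)$ by $w_\alpha + (w_\alpha(\tau_\xi)-w_\alpha)$ and using $|w_\alpha(\tau_\xi) - w_\alpha| \le \int_0^1 |\nabla w_\alpha|(x+\theta\xi)|\xi|\,d\theta$ from Taylor's formula, so that after Hölder the whole main term is genuinely $O(\bar\xi)\,|v|^p_{W^n_p}$.

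Thus the skeleton is: (1) Leibniz/chain-rule split $D_\alpha Iv = $ main part $+ R_\alpha$, with $R_\alpha$ handled by Lemma \ref{lemma chainrule2}, Assumption \ref{assumption N}, Hölder, and Young; (2) for the main part, use Taylor's formula \eqref{taylor1} on $D_\alpha v$ to write $w_\alpha(\tau_\xi)-w_\alpha = \int_0^1 (D_i w_\alpha)(x+\theta\xi)\,\xi^i\,d\theta$, insert into $\sum_\alpha(|w|^{p-2}w_\alpha, w_\alpha(\tau_\xi)-w_\alpha)$, apply Hölder in $x$ with exponents $p/(p-1)$ and $p$ on the factor $|w|^{p-2}w_\alpha$ versus $(D_iw_\alpha)(x+\theta\xi)\xi^i$, then change variables and use Corollary \ref{corollary epsilon} to pull off the Jacobian, arriving at $N\bar\xi\,|v|^{p-1}_{W^n_p}|v|_{W^{n+1}_p}$ — but that has $|v|_{W^{n+1}_p}$, which is not allowed, so instead of Hölder one integrates by parts in $x$ first (moving $D_i$ off $w_\alpha$) exactly as in the proof of Proposition \ref{proposition LMN}, producing only $|v|^p_{W^n_p}$ at the cost of extra factors $D\xi, D^2\xi$ each $\le K\wedge\bar\xi$. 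The main obstacle — the part requiring genuine care rather than bookkeeping — is precisely this: getting the estimate with the factor $\bar\xi$ (not merely a constant $N$) while keeping everything at the level of $W^n_p$ and not $W^{n+1}_p$; this forces the integration-by-parts argument and a careful accounting of which $\xi$-derivatives appear, mirroring the $\cM$-case $k=1$ computation already carried out in Proposition \ref{proposition LMN}. The identities \eqref{identity I} serve as the conceptual anchor for why no bad term survives, but the quantitative $\bar\xi$-bound comes from Taylor expansion plus integration by parts, not from \eqref{identity I} alone.
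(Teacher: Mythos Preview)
Your decomposition $D_\alpha(Iv) = I(D_\alpha v) + R_\alpha$ and your treatment of the remainder $R_\alpha$ are correct and match the paper. The gap is in the main term
\[
\sum_{|\alpha|=n}\int |D^nv|^{p-2}\,D_\alpha v\,\big((D_\alpha v)(\tau_\xi)-D_\alpha v\big)\,dx,
\]
where none of your three attempts closes. Young's inequality gives only a constant, not $\bar\xi$; Taylor plus H\"older gives $\bar\xi|v|_{W^n_p}^{p-1}|v|_{W^{n+1}_p}$, which is the wrong norm; and integrating by parts after Taylor, as you propose, throws a derivative onto $|D^nv|^{p-2}D_\alpha v$, which again produces $(n+1)$-st order factors of $v$ that cannot be controlled by $|v|_{W^n_p}^p$. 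The manoeuvre you sketch from Proposition~\ref{proposition LMN} works there because the test function $\varphi$ absorbs the derivative; here the ``test function'' is $|D^nv|^{p-2}D_\alpha v$, and it does not.

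The missing idea is exactly the one you dismiss as ``merely conceptual''. Because $p=2^k$, the scalar identity \eqref{identity I} can be \emph{iterated}: writing $w_\alpha=D_\alpha v$ and $|w|=|D^nv|$, one has pointwise
\[
|w|^{p-2}\sum_\alpha w_\alpha\,I w_\alpha
\le \tfrac12\,|w|^{p-2}\,I(|w|^2)
= \tfrac12\,|w|^{p-4}|w|^2\,I(|w|^2)
\le \tfrac14\,|w|^{p-4}\,I(|w|^4)
\le \cdots
\le \tfrac{1}{p}\,I(|w|^p),
\]
discarding a nonnegative square at each step. Now integrate and invoke Lemma~\ref{lemma intIJ}, specifically \eqref{intI}, which says $\int_{\bR^d} I\varphi\,dx \le N\bar\xi\,|\varphi|_{L_1}$; applied to $\varphi=|D^nv|^p$ this gives precisely $N\bar\xi\,|v|_{W^n_p}^p$. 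That lemma is where the factor $\bar\xi$ enters: it is an integration-by-parts identity (after change of variables) encoding that $\int I\varphi$ is $O(\bar\xi)$, not $O(1)$. Without it and without the $p=2^k$ iteration, there is no mechanism in your proposal to produce the $\bar\xi$ factor while staying at the $W^n_p$ level.
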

\begin{proof}
Consider first the case $n=0$. Then by identity \eqref{identity I} 
$$                                                                    
|v|^{p-2}vIv= \tfrac{1}{2}|v|^{p-2}Iv^2-\tfrac{1}{2}|v|^{p-2}(Iv)^2=
\tfrac{1}{2}|v|^{p-4}v^2Iv^2-\tfrac{1}{2}|v|^{p-2}(Iv)^2
$$
\begin{equation}                                                                       \label{recursion I}  
=\tfrac{1}{4}|v|^{p-4}Iv^4-\tfrac{1}{2}|v|^{p-2}(Iv)^2-\tfrac{1}{4}|v|^{p-4}(Iv^2)^2
=\dots=\tfrac{1}{p}Iv^p-A
\end{equation}
with 
$$
A=\sum_{j=1}^k2^{-j}|v|^{p-2^j}(Iv^{j})^2\geq0. 
$$
Hence integrating over $\bR^d$, by \eqref{intI} we have 
$$
{\mathcal I}(v)\leq \frac{1}{p}\int_{\bR^d}Iv^p\,dx\leq N{\bar\xi}|v|^p_{L_p}. 
$$
Assume now that $n\geq 1$ and let $\alpha$ be a multi-number of length $n$. 
Let $T$ denote the operator defined by $Tg(x)=g(x+\xi(x))$ on functions $g=g(x)$  
of $x\in\bR^d$. Then 
$$
(Iv)_k=Iv_k+\xi^i_kTv_i,\quad (Tv)_k=Tv_k+\xi^i_kTv_i
$$
for $k=1,2,...,d$. (Recall that we use the notation $g_{\alpha}=D_{\alpha}g$ for multi-numbers 
$\alpha$.) Hence, by induction on the length $n$ of the multi-number of $\alpha$, we obtain 
$$
(Iv)_{\alpha}=Iv_{\alpha}+\sum_{1\leq |\beta|\leq n}q^{\alpha,\beta}Tv_{\beta}, 
$$
with some polynomial $q^{\alpha,\beta}$ of 
$\{\xi^i_{\gamma}:1\leq|\gamma|\leq n,\,i=1,...,d\}$ for each multi-number $\beta$ of length between 
1 and $n$. 
The degree of these polynomials is not greater than $n$, their constant term is zero, and the other 
coefficients are nonnegative integers. Hence 
$$
|D^nv|^{p-2}v_{\alpha}(Iv)_{\alpha}=|D^nv|^{p-2}v_{\alpha}Iv_{\alpha}
+\sum_{1\leq |\beta|\leq n}|D^nv|^{p-2}v_{\alpha}q^{\alpha,\beta}Tv_{\beta}, 
$$
where the repeated multi-numbers $\alpha$ mean summation over $|\alpha|=n$.  
By using the same calculation as in \eqref{recursion I} we have 
$$
|D^nv|^{p-2}v_{\alpha}Iv_{\alpha}
= \tfrac{1}{2}|D^nv|^{p-2}\{I(|D^nv|^2)-\sum_{|\alpha|=n}(Iv_{\alpha})^2\}
$$
$$
\leq \tfrac{1}{2}|D^nv|^{p-4}|D^nv|^2I(|D^nv|^2)\leq\dots\leq\tfrac{1}{p}I(|D^nv|^p).  
$$
Thus 
$$
|D^nv|^{p-2}v_{\alpha}(Iv)_{\alpha}
\leq \tfrac{1}{p}I(|D^nv|^p)+N{\bar\xi}|D^nv|^{p-1}\sum_{1\leq |\beta|\leq n}|Tv_{\beta}|
$$
\begin{equation}
\leq \tfrac{1}{p}I(|D^nv|^p)+N{\bar\xi}|D^nv|^{p}
+N'{\bar\xi} \sum_{1\leq |\beta|\leq n}|Tv_{\beta}|^p
\end{equation}
with constants $N$ and $N'$ depending only on $m,d$ and $p$. 
Integrating here over $\bR^d$  we get \eqref{estimate I}. 
\end{proof}
\begin{lemma}                                                                  \label{lemma J}
Let Assumption \ref{assumption M} hold. Then for $p=2^k$ 
for integers $k\geq1$, and  for $n=0,1,...,m$ we have 
\begin{equation}                                                               \label{estimate J}
\frJ(v):=\sum_{|\alpha|=n}(|D^nv|^{p-2}D_{\alpha}v,D_{\alpha}Jv)
\leq N{\bar\eta}^2|v|^p_{W^n_p}
\end{equation}
for all $v\in W^{n+2}_p$ and $(t,z)\in[0,T]\times\bR^d$ with a constant $N=N(d,p,m,K)$.  
\end{lemma}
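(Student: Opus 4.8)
The plan is to run the proof of Lemma \ref{lemma N} almost verbatim, replacing $I$ by $J$, the identity \eqref{identity I} by \eqref{identity J}, and the estimate \eqref{intI} by \eqref{intJ}; the square $\bar\eta^{2}$ (instead of a single power of $\bar\xi$) reflects that $J$ is a second-order increment, exactly as already witnessed by \eqref{intJ}. As usual it suffices to treat $v\in C_0^{\infty}$, so that all the quantities below lie in the relevant $W^m_1$-spaces.

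\emph{Case $n=0$.} By \eqref{identity J} one has $|v|^{p-2}vJv=\tfrac12|v|^{p-2}Jv^{2}-\tfrac12|v|^{p-2}(I^{\eta}v)^{2}$, and iterating this exactly as in \eqref{recursion I} (legitimate since $p=2^{k}$) gives $|v|^{p-2}vJv=\tfrac1p Jv^{p}-A$ with $A=\sum_{j=1}^{k}2^{-j}|v|^{p-2^{j}}(I^{\eta}v^{2^{j-1}})^{2}\ge0$. Integrating over $\bR^{d}$ and applying \eqref{intJ} yields $\frJ(v)\le\tfrac1p\int_{\bR^{d}}Jv^{p}\,dx\le N\bar\eta^{2}|v|_{L_{p}}^{p}$.

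\emph{Case $n\ge1$: reduction to a diagonal term.} First I would prove a commutator formula for $D_{\alpha}Jv$. Writing $T^{\eta}w:=w(\cdot+\eta_{t,z})$ and starting from the elementary identities $D_{k}Jw=Jw_{k}+\eta^{j}_{k}I^{\eta}w_{j}$, $D_{k}I^{\eta}w=I^{\eta}w_{k}+\eta^{j}_{k}T^{\eta}w_{j}$ and $D_{k}T^{\eta}w=T^{\eta}w_{k}+\eta^{j}_{k}T^{\eta}w_{j}$, an induction on $|\alpha|$ gives, for $|\alpha|=n$,
$$
D_{\alpha}Jv=J(v_{\alpha})+\sum q^{\beta}_{\alpha}\,I^{\eta}v_{\beta}+\sum r^{\beta}_{\alpha}\,T^{\eta}v_{\beta},\qquad|\beta|\le n,
$$
where $q^{\beta}_{\alpha}$ and $r^{\beta}_{\alpha}$ are integer polynomials with vanishing constant term in the partial derivatives of $\eta$ of order $\le n$, the $q^{\beta}_{\alpha}$ carrying at least one and the $r^{\beta}_{\alpha}$ at least two derivative factors; the point of the induction is that the class of expressions ``$J(\cdot)$-term, (one $\eta$-factor)$\,I^{\eta}$, ($\ge2$ $\eta$-factors)$\,T^{\eta}$'' is stable under $D_{k}$, so no term of type (one $\eta$-factor)$\,T^{\eta}$ is ever generated. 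I would then split $\frJ(v)=\frJ_{\mathrm d}+\frJ_{\mathrm c}$ with $\frJ_{\mathrm d}:=\sum_{|\alpha|=n}(|D^{n}v|^{p-2}v_{\alpha},J(v_{\alpha}))$; applying \eqref{identity J} to each $v_{\alpha}$, summing, multiplying by $\tfrac12|D^{n}v|^{p-2}$ and telescoping as in \eqref{recursion I} with $w=|D^{n}v|^{2}$ and $q:=p/2$ (again a power of $2$) gives
$$
\frJ_{\mathrm d}=\tfrac1p\int_{\bR^{d}}J(|D^{n}v|^{p})\,dx-\tfrac12\int_{\bR^{d}}R\,dx-\tfrac12\int_{\bR^{d}}|D^{n}v|^{p-2}\sum_{|\alpha|=n}(I^{\eta}v_{\alpha})^{2}\,dx
$$
with $R\ge0$, and since the last two integrals are non-negative, \eqref{intJ} applied to $|D^{n}v|^{p}$ gives $\frJ_{\mathrm d}\le N\bar\eta^{2}|v|_{W^{n}_{p}}^{p}$.

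\emph{The commutator term, and the main obstacle.} It remains to estimate $\frJ_{\mathrm c}=\sum_{|\alpha|=n}(|D^{n}v|^{p-2}v_{\alpha},\,q^{\beta}_{\alpha}I^{\eta}v_{\beta}+r^{\beta}_{\alpha}T^{\eta}v_{\beta})$. For the $r^{\beta}_{\alpha}$-terms one has $|r^{\beta}_{\alpha}|\le N\bar\eta^{2}$ (at least two factors, each bounded by $\bar\eta\wedge K$), so H\"older's inequality and the change of variable $y=x+\eta$ (Corollary \ref{corollary epsilon}) bound them by $N\bar\eta^{2}|v|_{W^{n}_{p}}^{p}$; for the $q^{\beta}_{\alpha}$-terms with $|\beta|\le n-1$ one has $|q^{\beta}_{\alpha}|\le N\bar\eta$ and the missing second factor $\bar\eta$ is supplied by $I^{\eta}v_{\beta}=\int_{0}^{1}\eta^{i}v_{\beta i}(\cdot+\theta\eta)\,d\theta$, which does not raise the order of $v$ above $n$, so the same bound results. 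The delicate terms are those $q^{\beta}_{\alpha}I^{\eta}v_{\beta}$ with $|\beta|=n$, for which necessarily $q^{\beta}_{\alpha}=c\,\eta^{m}_{l}$ is a single first-order derivative of $\eta$ and $\beta$ is $\alpha$ with one index replaced; up to combinatorial constants these sum to $\sum_{|\gamma|=n-1}c_{\gamma}\sum_{l,m}(|D^{n}v|^{p-2}v_{\gamma l}\eta^{m}_{l},\,I^{\eta}v_{\gamma m})$, and here Taylor's formula is unavailable because $I^{\eta}v_{\gamma m}$ is an increment of a top-order derivative. My plan for these is to write $I^{\eta}v_{\gamma m}=D_{m}(I^{\eta}v_{\gamma})-\eta^{l}_{m}T^{\eta}v_{\gamma l}$, which peels off a two-$\eta$-factor piece, then integrate the remaining piece by parts in $x^{m}$; invoking once more \eqref{identity I}--\eqref{identity J} and the telescoping of the first step, the non-positive remainders $-\tfrac12\int R$ and $-\tfrac12\int|D^{n}v|^{p-2}\sum(I^{\eta}v_{\alpha})^{2}$ discarded from $\frJ_{\mathrm d}$ should reappear with opposite sign and cancel, leaving terms that are either of order $\le n$ with at least two $\eta$-derivative factors, or else can be absorbed into the right-hand side after one further integration by parts. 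I expect this last step --- verifying that no order-$(n{+}1)$ term and no term carrying only a single factor $\bar\eta$ survives --- to be the main obstacle.
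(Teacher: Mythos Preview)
Your setup, the $n=0$ case, the commutator expansion for $D_{\alpha}Jv$, and the treatment of the $r^{\beta}_{\alpha}T^{\eta}v_{\beta}$-terms and of the $q^{\beta}_{\alpha}I^{\eta}v_{\beta}$-terms with $|\beta|\le n-1$ are all correct and match the paper. The divergence is precisely at your ``main obstacle'': the top-order terms $q^{\beta}_{\alpha}I^{\eta}v_{\beta}$ with $|\beta|=n$.

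The gap in your plan is that you throw away the negative term
\[
-\tfrac12\int_{\bR^d}|D^{n}v|^{p-2}\sum_{|\alpha|=n}(I^{\eta}v_{\alpha})^{2}\,dx
\]
from the diagonal part $\frJ_{\mathrm d}$ too early. In the paper this term is \emph{kept} and used to absorb the dangerous top-order commutator, avoiding any integration by parts. Concretely: since $q^{\beta}_{\alpha}$ has zero constant term, $|q^{\beta}_{\alpha}|\le N(\bar\eta\wedge K)$ and hence $|q^{\beta}_{\alpha}|^{2}\le N\bar\eta^{2}$. Young's inequality gives, for each $|\beta|=n$,
\[
\big|v_{\alpha}\,q^{\beta}_{\alpha}\,I^{\eta}v_{\beta}\big|
\le \varepsilon\,(I^{\eta}v_{\beta})^{2}+\varepsilon^{-1}\,|D^{n}v|^{2}\sum_{|\alpha|=n}|q^{\beta}_{\alpha}|^{2}
\le \varepsilon\,(I^{\eta}v_{\beta})^{2}+N\varepsilon^{-1}\bar\eta^{2}\,|D^{n}v|^{2}.
\]
Multiplying by $|D^{n}v|^{p-2}$ and summing over $|\beta|=n$, the first piece is exactly $\varepsilon\,|D^{n}v|^{p-2}\sum_{|\beta|=n}(I^{\eta}v_{\beta})^{2}$, which with $\varepsilon=\tfrac12$ is cancelled by the negative term retained from $\frJ_{\mathrm d}$. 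The second piece integrates to $N\bar\eta^{2}|v|_{W^{n}_{p}}^{p}$. That closes the estimate with no order-$(n{+}1)$ terms appearing at all.

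Your proposed integration-by-parts route is therefore unnecessary, and in fact problematic: differentiating $|D^{n}v|^{p-2}$ in $x^{m}$ produces $(p-2)|D^{n}v|^{p-4}\sum_{|\alpha|=n}v_{\alpha}v_{\alpha m}$, which carries an order-$(n{+}1)$ factor not compensated by any obvious cancellation. The paper's Young-inequality absorption is both simpler and what actually makes the argument go through.
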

\begin{proof}
Consider first the case $n=0$. Then using identity \eqref{identity J} and proceeding with the proof 
in the same way as in the proof of the previous lemma we get 
\begin{equation}                                                                      \label{recursion J}
v^{p-2}vJv=\tfrac{1}{2}v^{p-2}J(v^2)-\tfrac{1}{2}v^{p-2}(I^\eta(v))^2
=\dots=\tfrac{1}{p}Jv^p-B
\end{equation}
with 
$$
B=\sum_{j=1}^k2^{-j}|v|^{p-2^j}(I^{\eta}v^{j})^2\geq0. 
$$
Integrating here over $\bR^d$ by \eqref{intJ} we have 
$$
\frJ(v)\leq \tfrac{1}{p}\int_{\bR^d}J(v^p)\,dx\leq N{\bar\eta}^2|v|^p_{L_p}. 
$$
Assume now that $n\geq 1$ and let $\alpha$ be a multi-number of length $n$. 
Let $Tg$ denote the operator defined by 
$Tg(x):=g(x+\eta(x))$. Then 
for $(Tv)_k:=D_k(Tv)$, $(I^{\eta}v)_k:=D_k(I^{\eta}v)$ and $(Jv)_k:=D_k(Jv)$ we have 
\begin{equation*}
(Tv)_k=Tv_k+\eta^i_kTv_i,\quad (Iv)_k=Iv_k+\eta^i_kTv_i, 
\quad (Jv)_k=Jv_k+\eta^i_kIv_i
\end{equation*}
for every $k=1,...,d$, where, and later on within the proof we write $I$ in place of $I^{\eta}$ to ease notation. 
 Hence by induction on the length of $\alpha$ we get 
$$
(Jv)_{\alpha}=Jv_{\alpha}
+\sum_{1\leq |\beta|\leq n}p^{\alpha,\beta}Iv_{\beta}
+\sum_{1\leq |\beta|\leq n}q^{\alpha,\beta}Tv_{\beta}, 
$$
with some polynomials $p^{\alpha,\beta}$ and $q^{\alpha,\beta}$ of 
$\{\eta^i_{\gamma}: 1\leq|\gamma|\leq n, i=1,...,d\}$. 
The degree of these polynomials is not greater than $n$, their constant term is zero, 
the coefficients of each first order term in the polynomials 
$q^{\alpha,\beta}$ is also zero, all the 
other coefficients in $p^{\alpha,\beta}$ and $q^{\alpha,\beta}$ 
are nonnegative integers. Hence we get 
\begin{equation}                                                                             \label{Ja}
|D^nv|^{p-2}v_{\alpha}(Jv)_{\alpha}=|D^nv|^{p-2}v_{\alpha}Jv_{\alpha}
+A^{\beta}+B^{\beta}
\end{equation}
with 
$$
A^{\beta}:=|D^nv|^{p-2}v_{\alpha}p^{\alpha,\beta}Iv_{\beta}, 
\quad
B^{\beta}:= |D^nv|^{p-2}v_{\alpha}q^{\alpha,\beta}Tv_{\beta},  
$$
where repeated $\alpha$ means summation 
over the multi-numbers $\alpha$ of length $n$.  

Clearly,  for all $\beta$ we have 
$$
|B^{\beta}|\leq N{\bar\eta}^2|Dv|^{p-1}|Tv_{\beta}|
$$
with constants $N=N(m,K,d).$ 
For $|\beta|\leq n-1$ we estimate $A^{\beta}$ in the same way to get 
$$
|A^{\beta}|\leq N\bar{\eta}|D^nv|^{p-1}|Iv_{\beta}|,   
$$
and for $|\beta|=n$  we use Young's inequality 
to write 
$$
|v_{\alpha}p^{\alpha,\beta}Iv_{\beta}|
\leq \varepsilon|Iv_{\beta}|^2
+\varepsilon^{-1}|v_{\alpha}p^{\alpha,\beta}|^2
\leq \varepsilon|Iv_{\beta}|^2
+\varepsilon^{-1}|D^nv|^2\sum_{|\alpha|=n}|p^{\alpha,\beta}|^2
$$
$$
\leq \varepsilon|Iv_{\beta}|^2
+{N}{\varepsilon}^{-1}{\bar\eta}^2|D^nv|^2. 
$$
Hence for $|\beta|=n$ we have 
$$
|A^{\beta}|\leq \varepsilon |D^nv|^{p-2}|Iv_{\beta}|^2 
+{N}{\varepsilon}^{-1}{\bar\eta}^2|D^nv|^{p}
$$
for $\varepsilon>0$ with a constant $N=N(K,d,m)$. 
Calculating as in \eqref{recursion J} we obtain 
$$
|D^nv|^{p-2}v_{\alpha}Jv_{\alpha}
\leq\tfrac{1}{2}|D^nv|^{p-2}\{J(|D^nv|^2)-\sum_{|\alpha|=n}|Iv_{\alpha}|^2\}
$$
$$
 \leq\tfrac{1}{4}|D^nv|^{p-4}|D^nv|^2J(|D^nv|^2)
 -\tfrac{1}{2}\sum_{|\alpha|=n}|D^nv|^{p-2}|Iv_{\alpha}|^2
 \leq\dots
$$
$$
\leq\tfrac{1}{p}J(|D^nv|^p)
-\tfrac{1}{2}\sum_{|\alpha|=n}|D^nv|^{p-2}|Iv_{\alpha}|^2.
$$
Using these estimates, from \eqref{Ja} we obtain 
$$
|D^nv|^{p-2}v_{\alpha}(Jv)_{\alpha}
\leq \tfrac{1}{p}J(|D^nv|^p)-\tfrac{1}{2}\sum_{|\alpha|=n}|D^nv|^{p-2}|Iv_{\alpha}|^2
+\varepsilon |D^nv|^{p-2}\sum_{|\beta|=n}|Iv_{\beta}|^2 
$$
$$
+N{\varepsilon}^{-1}{\bar\eta}^2|D^nv|^{p}+
N{\bar\eta}^2|D^nv|^{p-1}\sum_{1\leq |\beta|\leq n}|Tv_{\beta}|
+N{\bar\eta}|D^nv|^{p-1}\sum_{1\leq |\beta|\leq n-1}|Iv_{\beta}|. 
$$
Choosing here 
$\varepsilon=1/2$, we get 
\begin{align}
|D^nv|^{p-2}v_{\alpha}(Jv)_{\alpha}
\leq &\tfrac{1}{p}J(|D^nv|^p)
+N{\bar\eta}^2|Dv|^{p}                                                                           \nonumber\\
&+N{\bar\eta}^2|D^nv|^{p-1}\sum_{1\leq |\beta|\leq n}|Tv_{\beta}|
+N{\bar\eta}|D^nv|^{p-1}
\sum_{1\leq |\gamma|\leq n-1}|Iv_{\gamma}|.                                         \label{inequality J}
\end{align}
By H\"older's inequality, 
taking into account \eqref{TI} we have 
\begin{align} 
\int_{\bR^d}|D^nv|^{p-1}|Tv_{\beta}|\,dx
&\leq N|v|^{p-1}_{W^n_p}|Tv_{\beta}|_{L_p}
\leq N'|v|^{p}_{W^n_p} \quad\text{for $|\beta|\leq n$},                  \nonumber\\
\int_{\bR^d}|D^nv|^{p-1}|Iv_{\gamma}| \,dx
&\leq 
N|v|^{p-1}_{W^n_p}|Iv_{\gamma}|_{L_p}
\leq N'\bar\eta |v|^{p}_{W^n_p}\quad\text{for $|\gamma|\leq n-1$}    \label{intTS}
\end{align}
with some constants $N=N(d,p)$ and $N'=N'(d,m,p,K)$. 
Integrating inequality \eqref{inequality J} over $\bR^d$ 
and using inequalities \eqref{intJ} and \eqref{intTS} we obtain \eqref{estimate J}. 
\end{proof}
\begin{proof}[Proof of Theorem \ref{theorem crucial}]
By the definition of $\cA$ for $v\in W^{n+2}_p$ we have 
$$
\sum_{|\alpha|=n}(|D^nv|^{p-2}D_{\alpha}v,D_{\alpha}\cA v)
=\sum_{|\alpha|=n}(|D^nv|^{p-2}D_{\alpha}v,D_{\alpha}\cL v)
$$
$$
+\sum_{|\alpha|=n}(|D^nv|^{p-2}D_{\alpha}v,D_{\alpha}\cR v)
+\int_Z\mathcal I(v)\nu(dz)+\int_Z{\frJ}(v)\mu(dz),   
$$
where $\mathcal I$ and ${\frJ}$ are defined 
in \eqref{estimate I} and \eqref{estimate J}, respectively. 
Due to Assumption \ref{assumption R} by the Cauchy-Schwarz 
and H\"older inequalities we obtain 
$$
\sum_{|\alpha|=n}(|D^nv|^{p-2}D_{\alpha}v,D_{\alpha}\cR v)
\leq (|D^nv|^{p-1},|D^n\cR v|)\leq |D^nv|_{L_p}^{p-1}|D^n\cR v|_{L_p}
\leq N|v|^p_{W^n_p}
$$
with a constant $N=N(d,p,m,K)$. 
Hence we get Theorem \ref{theorem crucial} 
by Lemmas \ref{lemma L}, \ref{lemma N} and \ref{lemma J}. 
\end{proof}

\mysection{Proof of the main result}                                            \label{section mainproof}

\subsection{Uniqueness of the generalised solution when $p=2^k$.}
Define
\begin{align*}
&Q^{(1)}(v):=\int_{\bR^d}|v|^{p-2}v(\bar b^iD_iv+cv)-(p-1)|v|^{p-2}D_iva^{ij}D_jv\,dx\\
&Q^{(2)}(v):=(|v|^{p-2}v,\cN v)\\
&Q^{(3)}(v):=\int_{\bR^d}|v|^{p-2}v\cJ^0v-(p-1)|v|^{p-2}D_kv\cJ^kv\,dx, 
\end{align*}
Notice that under 
Assumptions \ref{assumption L},  
\ref{assumption M} 
and \ref{assumption N} with $m=1$ and $p=2^k$ by Lemmas \ref{lemma L}, 
\ref{lemma N} and \ref{lemma J}
we have 
\begin{equation}                                           \label{N1N2}
Q^{(1)}(v)=(|v|^{p-2}v, \cL v)\leq N_1|v|^p_{L_p}, 
\quad Q^{(2)}(v)\leq N_2|v|^p_{L_p}, 
\end{equation}
\begin{equation}                                             \label{N3}
Q^{(3)}(v)=(|v|^{p-2}v, \cM v)\leq N_3|v|^p_{L_p}
\end{equation}
for $v\in C_0^{\infty}$ with constants $N_1=N_1(d,p,K)$, $N_2(d,p,K,K_{\xi})$ 
and $N_3(d,p,K,K_{\eta})$.  
We show below that these estimates 
hold also for $v\in W^1_p$. 
To this end we use the following lemma 
from \cite{K}. 

\begin{lemma}                                                                                               \label{lemma Krylov}
Let $(S,\mathcal{S}, \nu)$ be a measure space, and let $\{v_n\}_{n\in\N}$ be 
a sequence of real-valued $\cS$-measurable functions defined on $S$ such that 
such that $v_n\rightarrow v$ in the measure $\nu$, and 
	$$
	\int|v_n|^rd\nu\rightarrow \int|v|^r d\nu.
	$$
for some $r>0$. Then $\int|v_n-v|^rd\nu \rightarrow 0$ as $n\to \infty$.	
\end{lemma}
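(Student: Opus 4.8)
The plan is to prove the lemma by the classical device of passing to subsequences and then invoking Fatou's lemma; no structure on $(S,\cS,\nu)$ beyond its being a measure space is needed. Since the hypothesis $\int|v_n|^r\,d\nu\to\int|v|^r\,d\nu$ is only meaningful when the limit is finite, I would first record that we may assume $\int|v|^r\,d\nu<\infty$, whence $\int|v_n|^r\,d\nu<\infty$ for all large $n$, so that all integrals appearing below are finite. To prove $\int|v_n-v|^r\,d\nu\to0$ it then suffices to show that every subsequence of $\{\int|v_n-v|^r\,d\nu\}_n$ admits a further subsequence tending to $0$; this is the standard reduction for convergence of a sequence of real numbers to a given point.

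So I would fix an arbitrary subsequence. Since $v_n\to v$ in measure $\nu$, convergence in measure yields $\nu$-a.e.\ convergence along a further subsequence, which for brevity I keep denoting $\{v_n\}$; thus now $v_n\to v$ $\nu$-a.e. Put $C_r:=\max(1,2^{r-1})$, so that $|a-b|^r\le C_r(|a|^r+|b|^r)$ for all $a,b\in\R$ (by convexity of $t\mapsto t^r$ when $r\ge1$, by subadditivity when $0<r\le1$). Then the functions
$$
w_n:=C_r\big(|v_n|^r+|v|^r\big)-|v_n-v|^r
$$
are nonnegative and $\cS$-measurable, and since $v_n\to v$ $\nu$-a.e.\ we have $w_n\to 2C_r|v|^r$ $\nu$-a.e.

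Applying Fatou's lemma to $\{w_n\}$ gives
$$
2C_r\!\int|v|^r\,d\nu\le\liminf_{n\to\infty}\int w_n\,d\nu
=\liminf_{n\to\infty}\Big(C_r\!\int|v_n|^r\,d\nu+C_r\!\int|v|^r\,d\nu-\int|v_n-v|^r\,d\nu\Big).
$$
Because $\int|v_n|^r\,d\nu\to\int|v|^r\,d\nu$, the right-hand side equals $2C_r\int|v|^r\,d\nu-\limsup_{n\to\infty}\int|v_n-v|^r\,d\nu$. As $\int|v|^r\,d\nu<\infty$, I may cancel the term $2C_r\int|v|^r\,d\nu$ to obtain $\limsup_{n\to\infty}\int|v_n-v|^r\,d\nu\le0$; since each of these integrals is nonnegative, $\int|v_n-v|^r\,d\nu\to0$ along this subsequence, which is exactly what was needed.

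The only genuinely delicate points, and the ones I expect to require the most care in the write-up, are: (i) convergence in measure does not by itself give $\nu$-a.e.\ convergence, so the two subsequence extractions — the reduction to subsequences at the start and the a.e.-convergent subsequence just before applying Fatou — are essential and must be phrased correctly; and (ii) the finiteness $\int|v|^r\,d\nu<\infty$, implicit in the hypotheses, is precisely what licenses the cancellation in the Fatou inequality. Everything else is routine.
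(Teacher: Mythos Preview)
Your proof is correct. The paper does not supply a proof of this lemma; it simply cites it from \cite{K} and uses it as a black box, so there is nothing to compare against. Your argument is the standard Riesz/Scheff\'e-type proof via Fatou's lemma applied to the nonnegative functions $C_r(|v_n|^r+|v|^r)-|v_n-v|^r$, combined with the subsequence device to upgrade convergence in measure to $\nu$-a.e.\ convergence; both delicate points you flag (the double subsequence extraction and the finiteness of $\int|v|^r\,d\nu$ needed for the cancellation) are handled correctly.
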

\begin{proposition}                                                 \label{proposition Qform}
Let Assumptions \ref{assumption L}, \ref{assumption M} 
and \ref{assumption N} 
hold with $m=0$.  Then for integers $k\geq1$ and $p=2^k$ 
\begin{equation}                                                                      \label{Qform}
Q^{(i)}(v)\leq N_i|v|^p_{L_p} \quad i=1,2,3
\end{equation}
for all $v\in W^1_p$ and $p=2^k$ for integers $k\geq1$ 
with the same constants $N_i$ as in \eqref{N1N2}- \eqref{N3}.  
\end{proposition}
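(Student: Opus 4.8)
The plan is to derive \eqref{Qform} from the already established estimates \eqref{N1N2}--\eqref{N3} on $C_0^\infty$ by approximation: I will show that for each $i$ the functional $Q^{(i)}$ is continuous on $W^1_p$, and since $v\mapsto|v|_{L_p}^p$ is obviously continuous on $W^1_p$ as well, the inequality $Q^{(i)}(v)\le N_i|v|_{L_p}^p$ then passes from $C_0^\infty$ (which is dense in $W^1_p$) to all of $W^1_p$ with the same constants. Accordingly, I fix $v\in W^1_p$ and a sequence $v_n\in C_0^\infty$ with $v_n\to v$ in $W^1_p$; it then remains to prove $Q^{(i)}(v_n)\to Q^{(i)}(v)$ for $i=1,2,3$.

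The only genuinely nonlinear ingredients are the weights $|v|^{p-2}v$ and, when $p>2$, $|v|^{p-2}$, and the key point is to control them in the appropriate $L_q$-scales. Since $v_n\to v$ in $L_p$, Chebyshev's inequality gives $v_n\to v$ in $dx$-measure, hence $|v_n|^{p-2}v_n\to|v|^{p-2}v$ in measure; moreover $\int_{\bR^d}\big||v_n|^{p-2}v_n\big|^{p/(p-1)}\,dx=|v_n|_{L_p}^p\to|v|_{L_p}^p=\int_{\bR^d}\big||v|^{p-2}v\big|^{p/(p-1)}\,dx$, so Lemma \ref{lemma Krylov} (with $r=p/(p-1)$) yields $|v_n|^{p-2}v_n\to|v|^{p-2}v$ in $L_{p/(p-1)}$. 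When $p>2$ the same reasoning with $r=p/(p-2)$ gives $|v_n|^{p-2}\to|v|^{p-2}$ in $L_{p/(p-2)}$; when $p=2$ this factor is identically $1$ and nothing is needed. I will also use repeatedly the elementary fact that $a_nb_n\to ab$ in $L_q$ whenever $a_n\to a$ in $L_r$, $b_n\to b$ in $L_s$ and $\tfrac1r+\tfrac1s=\tfrac1q$.

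With these tools in hand, the three functionals are treated by Hölder's inequality. For $Q^{(3)}$: the operators $\cJ^0$ and $\cJ^k$ are bounded from $W^1_p$ to $L_p$ (the observation following Definition \ref{definition solution}), so $\cJ^0v_n\to\cJ^0v$ and $\cJ^kv_n\to\cJ^kv$ in $L_p$; also $|v_n|^{p-2}v_n\to|v|^{p-2}v$ in $L_{p/(p-1)}$, and since $|v_n|^{p-2}\to|v|^{p-2}$ in $L_{p/(p-2)}$ while $D_kv_n\to D_kv$ in $L_p$, the product $|v_n|^{p-2}D_kv_n\to|v|^{p-2}D_kv$ in $L_{p/(p-1)}$; pairing these against the $L_p$-convergent factors and integrating gives $Q^{(3)}(v_n)\to Q^{(3)}(v)$. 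The treatment of $Q^{(2)}(v)=(|v|^{p-2}v,\cN v)$ is identical but shorter, using boundedness of $\cN:W^1_p\to L_p$. For $Q^{(1)}$, Assumption \ref{assumption L} makes $\bar b^i=b^i-D_ja^{ij}$ and $c$ bounded, so $\bar b^iD_iv_n+cv_n\to\bar b^iD_iv+cv$ in $L_p$, which pairs against $|v_n|^{p-2}v_n\to|v|^{p-2}v$ in $L_{p/(p-1)}$; for the quadratic term one expands $D_iv_n\,a^{ij}D_jv_n-D_iv\,a^{ij}D_jv=(D_iv_n-D_iv)a^{ij}D_jv_n+D_iv\,a^{ij}(D_jv_n-D_jv)$ and uses boundedness of $a^{ij}$ to obtain convergence in $L_{p/2}$, then multiplies by $|v_n|^{p-2}\to|v|^{p-2}$ in $L_{p/(p-2)}$ to conclude convergence in $L_1$. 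Hence $Q^{(1)}(v_n)\to Q^{(1)}(v)$, and \eqref{Qform} follows.

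The part that requires care — indeed the only nontrivial step — is the continuity of the Nemytskii-type maps $w\mapsto|w|^{p-2}w$ and $w\mapsto|w|^{p-2}$ along these $L_q$-scales: a.e. (or in-measure) convergence together with boundedness is not enough, and Lemma \ref{lemma Krylov} is exactly the device that upgrades in-measure convergence plus convergence of the $L_r$-norms to $L_r$-norm convergence. Everything else is routine Hölder bookkeeping combined with the operator bounds for the divergence-form part of $\cL$, for $\cN$, and for $\cJ^0$ and $\cJ^k$ already recorded in the paper; the case $p=2$, where $|v|^{p-2}\equiv1$, is merely a notational simplification.
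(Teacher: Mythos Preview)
Your proof is correct and follows essentially the same approach as the paper's: approximate $v\in W^1_p$ by $v_n\in C_0^\infty$, use Lemma~\ref{lemma Krylov} to upgrade in-measure convergence plus convergence of the $L_r$-norms to $L_r$-convergence of the nonlinear weights $|v_n|^{p-2}$ and $|v_n|^{p-2}v_n$, and combine this via H\"older's inequality with the $W^1_p\to L_p$ boundedness of $\cJ^0$, $\cJ^k$ and $\cN$. The paper's proof differs only in presentation: it passes to a subsequence for a.e.\ convergence (rather than invoking Chebyshev for convergence in measure), writes out the telescoping decomposition of $Q^{(3)}(v_n)-Q^{(3)}(v)$ explicitly, and leaves $Q^{(1)}$ and $Q^{(2)}$ to the reader, whereas you treat all three cases and organise the argument more systematically around the product-convergence principle.
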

\begin{proof}
Let $\{v_n\}_{n=1}^{\infty}$ be a sequence of $C_0^{\infty}$ functions, 
which converges in the $W^1_p$ norm 
to some $v\in W^1_p$ as $n\to \infty$. 
We claim that $Q^{(3)}(v_n)\to Q^{(3)}(v)$. If $p=2$ then clearly 
$$
Q^{(3)}(v_n)=(v_n,\cJ^0v_n)-(D_kv_n,\cJ^kv_n)
\to
(v,\cJ^0v)-(D_kv,\cJ^kv)=Q^{(3)}(v), 
$$
since $D_k$, $\cJ^k$ and $\cJ_0$ are bounded linear operators from $W^1_2$ into $L_2$, 
and the inner product $(\varphi,\phi)$ in $L_2$ is continuous in $\varphi,\phi\in L_2$. 
Assume now that $p=2^k$ for $k\geq1$. 
By choosing subsequences we may assume that $v_n\to v$ also almost surely. Clearly, 
$$
Q^{(3)}(v_n)-Q^{(3)}(v)= (p-1)B_n+C_n
$$ 
with 
$$
B_n:=\int_{\R^d}(|v|^{p-2}D_kv\cJ^kv-|v_n|^{p-2}D_kv_n\mathcal{J}^kv_n)\, dx
$$
$$
C_n:=\int_{\R^d}(|v_n|^{p-2}v\mathcal{J}^0v_n-|v_n|^{p-2}v\mathcal{J}^0v)\, dx. 
$$
Observe that $B_n=B_n^{(1)}+B_n^{(2)}+B_n^{(3)}$ with 
\begin{align*}
B_n^{(1)}:=&\int_{\R^d}(|v|^{p-2}-|v_n|^{p-2})D_kv\mathcal{J}^kv\,dx \\
B_n^{(2)}:=&\int_{\R^d}|v_n|^{p-2}(D_kv-D_kv_n)\mathcal{J}^kv\,dx  \\
B_n^{(3)}:=&\int_{\R^d}|v_n|^{p-2}D_kv_n(\mathcal{J}^kv-\mathcal{J}^kv_n)\, dx.
\end{align*}
On the one hand, by H\"older's inequality,
\begin{align*}
|B_n^{(1)}|\leq &||v_n|^{p-2}-|v|^{p-2}|_{L^\frac{p}{p-2}}|D_kv|_{L^p}|\mathcal{J}^kv|_{L^p},\\
|B_n^{(2)}|\leq&|v_n|_{L^p}^{p-2}|D_kv-D_kv_n|_{L^p}|\mathcal{J}^kv|_{L^p}  \\
|B_n^{(3)}|\leq&|v_n|^{p-2}_{L_p}|D_kv_n|_{L_p}|\mathcal{J}^kv-\mathcal{J}^kv_n|_{L_p}. 
\end{align*}
Since $v_n\to v$ in $W^1_p$, it is easy to see that $B^{(i)}_n\to0$ for $i=2,3$. 
By Lemma \ref{lemma Krylov} we have 
$$
||v_n|^{p-2}-|v|^{p-2}|_{L^\frac{p}{p-2}}\to0,
$$
which gives $\lim_{n\to\infty}B^{(1)}_n=0$. 
We get in the same way that $\lim_{n\to\infty}C_n=0$. 
Hence we finish the proof of the lemma for $i=3$ by letting $n\to \infty$ 
in the inequality \eqref{Qform} 
with $v_n$ in place of $v$. The estimates for $i=1,2$ can be proved similarly. 
\end{proof}

 Now we can prove the uniqueness of a generalised solution in the case $p=2^k$. 
 Let $u$ and $v$ be $W^1_p$-valued solutions of equation \eqref{eq1}. 
 Then $w=u-v$ solves \eqref{eq1} with $w(0)=0$ and $f=0$. 
 Hence, by the $L^p$ formula \eqref{formula1} we have 
\begin{align*}
|w(t)|_{L^p}^p=p\int_0^t\sum_{k=1}^4Q^{(k)}(w(s))\,ds, 
\end{align*}
where $Q^{(4)}$ is defined by 
$$
Q^{(4)}(v)=(|v|^{p-2}v,\cR(v))
$$
for $v\in W^1_p$. Due to Assumption \ref{assumption R} we have 
$|Q^{(4)}(v)|\leq K|v|^p_{L_p}$. Hence,  by taking into account Proposition 
\ref{proposition Qform} 
we get a constant $N$ such that 
$$
|w(t)|_{L^p}^p\leq N\int_0^t|w(s)|^p_{L_p}\,ds,  
$$
which proves $u=v$.

\subsection{Existence of a generalised solution}

In the whole subsection we assume that 
Assumptions \ref{assumption L} through \ref{assumption free} 
hold with $m\geq1$ and $p\geq2$. 
We prove the existence of a solution to equation \eqref{eq1} with initial condition $u(0)=\psi$ 
in several steps below. In the first three steps we assume that $p=2^k$ for some integer $k\geq1$ 
and that $m$ is an integer. We construct a solution $u$ in $L_p([0,T], W^m_p)$ 
by approximation procedures, 
and estimate its norm in $L_p([0,T], W^s_p)$ for integers $s=0,1,...,m$ (for $p=2^k$)  
by the right-hand side of \eqref{estimatesup}. Hence, using standard results from 
interpolation theory we prove the existence of a generalised solution 
$u\in L_p([0,T],V^m_p)$ when $p\geq2$ 
and $m\geq1$ are any real numbers. Moreover, we show that 
$u\in C([0,T], V^s_p)\cap C_w([0,T], V^m_p)$ for every $s<m$, 
and obtain also the estimate \eqref{estimatesup}. We note that similar interpolation arguments 
are used in \cite{GG} to obtain estimates in $L_p$-spaces for solutions of stochastic finite difference 
schemes.

{\it Step 1.} First, in addition to Assumptions \ref{assumption L}, \ref{assumption M}, \ref{assumption N} 
and \ref{assumption free}, we assume that 
$\psi$ and $f$ are compactly supported, and that $\mu(Z)<\infty$ and $\nu(Z)<\infty$. Under 
these assumptions we approximate the Cauchy problem \eqref{eq1} 
with initial condition $u(0)=\psi$ by smoothing the data and the coefficients in the problem. 
Recall that for $\varepsilon>0$ and functions $v$ on $\bR^d$ the notation $v^{(\varepsilon)}$ 
means the mollification $v^{(\varepsilon)}=S_{\varepsilon}v$ of $v$ defined in 
\eqref{smoothing}. 
We consider the Cauchy problem 
\begin{align}                                                                                              
dv(t,x)=&(\mathcal{A}_{\varepsilon}v(t,x) 
+f^{(\varepsilon)}(t,x))\,dt, 
\quad (t,x)\in H_T ,                   \label{eq2}\\
v(0,x)=&\psi^{(\varepsilon)}(x),\quad x\in\bR^d                                                               \label{ini2}
\end{align} 
for $\varepsilon\in(0,\varepsilon_0)$, where $\varepsilon_0$ is given in 
Corollary \ref{corollary epsilon}, and  
$$
\mathcal{A}_{\varepsilon}:=\mathcal{L}_{\varepsilon}
+\cM_{\varepsilon}+\cN_{\varepsilon}+\cR_{\varepsilon}
$$
with operator $\mathcal{R}_{\varepsilon}={S}_{\varepsilon}\mathcal{R}$ 
and operators $\mathcal{L}_{\varepsilon}$, $\cM_{\varepsilon}$ 
and $\cN_{\varepsilon}$, defined by 
$$
\mathcal{L}_{\varepsilon}
=a_{\varepsilon}^{ij}D_{ij}+b^{(\varepsilon)i}D_i+c^{(\varepsilon)}, 
\quad 
a_{\varepsilon}=a^{(\varepsilon)}+\varepsilon \bI,
$$
$$
\cM_{\varepsilon}\varphi(x)
=\int_{Z}\{\varphi(x+\eta^{(\varepsilon)}_{t,z})-\varphi(x)
-\eta^{(\varepsilon)}_{t,z}\nabla\varphi(x)\}\,\mu(dz),
$$
$$
\cN_{\varepsilon}\varphi=\int_{Z}\{\varphi(x+\xi^{(\varepsilon)}_{t,z})-\varphi(x)\}\,\nu(dz)
$$
for $\varphi\in C_0^{\infty}(\bR^d)$. (Recall that $\bI$ denotes the $d\times d$ unit matrix.)  	
	
Since $\psi^{(\varepsilon)}$ and $f^{(\varepsilon)}$ 
are compactly supported, they belong to $W^n_2$ for every $n\ge 0$. 
By standard results of the $L_2$-theory of parabolic PDEs, \eqref{eq2}-\eqref{ini2} has  
a unique solution $u_{\varepsilon}$, which is a continuous  $W^n_2$-valued function of $t\in[0,T]$ 
for every $n\ge 0$ (see, e.g., \cite{KR1977} or \cite{Ro}).  
Thus for any $\varphi\in C_0^{\infty}$ we have 
$$
(u_{\varepsilon}(t),\varphi)=
(\psi^{(\varepsilon)},\varphi)
$$
$$
+\int_0^t-(a_{\varepsilon}^{ij}D_ju_{\varepsilon}(s), D_i\varphi)
+({\bar b}^{i(\varepsilon)}D_iu_{\varepsilon}(s)
+c^{(\varepsilon)}u_{\varepsilon}(s)+\cR_{\varepsilon}u_{\varepsilon}(s)+f^{(\varepsilon)}(s),\varphi)\,ds 
$$
\begin{equation}                                                              \label{eqe}
+\int_0^t-(\cJ_{\varepsilon}^{i}u_{\varepsilon}(s),D_i\varphi)
+(\cJ_{\varepsilon}^{0}u_{\varepsilon}(s),\varphi)+(\cN_{\varepsilon} u_{\varepsilon}(s),\varphi)\,ds
\end{equation}
for $t\in[0,T]$, where  $\cJ_{\varepsilon}^{i}$ and  $\cJ_{\varepsilon}^{0}$ are defined 
as $\cJ^{i}$ and $\cJ^{0}$, respectively in \eqref{def J0}, but with $\eta^{k(\varepsilon)}$ 
and $\eta^{l(\varepsilon)}_k$ in place of $\eta^{k}$ and $\eta^{l}_k$, respectively, 
for $k,l=1,2,...,d$. Notice that \eqref{eqe} can be rewritten as 
$$ 
(u_{\varepsilon}(t),\varphi)=
(\psi^{(\varepsilon)},\varphi)
+\int_0^t
(\cA_{\varepsilon}u_{\varepsilon}(s)
+f^{(\varepsilon)}(s),\varphi)\,ds, 
\quad
t\in[0,T],\quad\varphi\in C_0^{\infty}, 
$$
and, equivalently,  as 
$$
(D_{\alpha}u_{\varepsilon}(t),\varphi)=
(D_{\alpha}\psi^{(\varepsilon)},\varphi)
+\int_0^t(D_{\alpha}\cA_{\varepsilon}u_{\varepsilon}(s)
+D_{\alpha}f^{(\varepsilon)}(s),\varphi)\,ds  
\quad
t\in[0,T],\quad\varphi\in C_0^{\infty}
$$
for all multi-numbers $\alpha$ of length $n$.   
By Sobolev embedding $u_{\varepsilon}$ is a continuous  $W^n_p$-valued 
function for every $n\ge 0$ and 
$p\geq2$. Hence by Lemma \ref{lemma Kr multi} we have 
$$
||D^nu_{\varepsilon}||^p_{L_p}
=||D^n\psi^{(\varepsilon)}||^p_{L_p}
$$
$$
+p\int_0^t\sum_{|\alpha|=n}
(|D^{n}u_{\varepsilon}(s)|^{p-2}D_{\alpha}u_{\varepsilon}(s),
D_{\alpha}\cA_{\varepsilon}u_{\varepsilon}(s)
+D_{\alpha}f^{(\varepsilon)}(s))\,ds  
$$
for $p=2^k$, which by Theorem \ref{theorem crucial}, 
known properties of mollifications and Young's inequality gives 
$$
||D^nu_{\varepsilon}||^p_{L_p}
\leq 
||D^n\psi||^p_{L_p}
+N\int_0^t|u_{\varepsilon}(s)|^p_{W^n_p}+
(|D^{n}u_{\varepsilon}(s)|^{p-2}D_{\alpha}u_{\varepsilon}(s),D_{\alpha}f^{(\varepsilon)}(s))\,ds  
$$
$$
\leq ||D^n\psi||^p_{L_p}
+N\int_0^t\{|u_{\varepsilon}(s)|^p_{W^n_p}+\tfrac{p-1}{p}|u_{\varepsilon}(s)|^p_{W^n_p} 
+\tfrac{1}{p}|f(s)|^p_{W^n_p}\}\,ds. 
$$
This via Gronwall's lemma implies that for $\varepsilon\in(0,\varepsilon_{0})$ 
\begin{equation}                                                                      \label{gronwall}
\sup_{t\in[0,T]}|u_{\varepsilon}(t)|^p_{W^n_{p}}
\leq 
N\left(|\psi|^p_{W^n_{p}}+\int_0^T|f(t)|^p_{W^{n}_{p}}dt\right)
\end{equation}
for every $n\geq0$ and $p=2^k$, for integers $k\geq1$, 
with a constant $N=N(T,p,d,n,K,K_{\xi},K_{\eta})$.
For 
$r>1$ and $p\geq2$ we denote by $\mathbb{W}^n_{p,r}$ the  space of $W^n_p$-valued functions 
$v$ of $t\in[0,T]$ such that 
	$$
	|v|_{\mathbb{W}^n_{p,r}}:=\left( \int_0^T |v(t)|_{W^n_p}^{r}dt\right)^{1/r}<\infty.
	$$
We use also the notation $\mathbb W^n_p$ and $\mathbb L_p$ for 
$\mathbb W^n_{p,p}$ and $\mathbb W^0_{p,p}$, respectively. 
Observe that with this norm $\mathbb{W}^n_{p,r}$ is a reflexive Banach space,  
and from \eqref{gronwall} we have 
\begin{equation}                                                                       \label{becsles}
|u_{\varepsilon}|^p_{{\mathbb W}^n_{p,r}}
\leq 
N\left(|\psi|^p_{W^n_{p}}+\int_0^T|f(t)|^p_{W^{n}_{p}}dt\right)
\end{equation}
for all $\varepsilon\in(0,\varepsilon_0)$, $p=2^k$, $r>1$ and $n=0,1,2,...,m$, 
with a constant 
$N$ depending only on $T$, $p$, $d$, $m$, $K$, $K_{\xi}$ and $K_{\eta}$.  
Hence there exists a sequence of positive numbers $\{\varepsilon_k\}_{k\in\N}$ 
such that $\varepsilon_k\to 0$ for $k\to\infty$, 
and $u_{\varepsilon_k}$ converges weakly to a function $u$ in $\mathbb{W}^n_{p,r}$ 
for every $n=0,1,\dots,m$ and integers $r>1$. From \eqref{becsles} we get 
$$
|u|^p_{\mathbb W^n_{p,r}}\leq \liminf_{k\to\infty}|u_{\varepsilon_k}|^p_{\mathbb W^n_{p,r}}
\leq N\left(|\psi|^p_{W^n_{p}}+\int_0^T|f(t)|^p_{W^{n}_{p}}dt\right).   
$$

Our aim now is to pass to the limit in equation \eqref{eqe} along 
$\varepsilon_k\to0$. To this end we 
take a real-valued bounded Borel function $h$ of $t\in[0,T]$, 
multiply 
both sides of equation \eqref{eqe} with $h(t)$ 
and then integrate it against $dt$ over $[0,T]$. Thus 
for a fixed $\varphi\in C_0^{\infty}$ and taking 
$\varepsilon_k$ in place of $\varepsilon$, we obtain
\begin{equation}                                                               \label{eqek}
F(u_{\varepsilon_k})=
\int_0^T(\psi^{(\varepsilon_k)},\varphi)h(t)\,dt
+\sum_{i=1}^5F^i_k(u_{\varepsilon_k})
+\int_0^T\int_0^t(f^{(\varepsilon_k)}(s),\varphi)h(t)\,ds\,dt, 
\end{equation}
where $F$ and $F^i_k$ 
are functionals defined for $v\in\mathbb W^{1}_p$ 
by
\begin{equation}                              \label{F}
F(v)=\int_0^T(v(t),\varphi)h(t)\,dt,
\end{equation}
$$
F^1_k(v)=\int_0^Th(t)\int_0^t-(a_{\varepsilon_k}^{ij}D_jv(s), D_i\varphi)
+({\bar b}^{i(\varepsilon_k)}D_iv(s)
+c^{(\varepsilon_k)}v(s),\varphi)\,ds\,dt,
$$
$$
F^2_k(v)=-\int_0^Th(t)\int_0^t(\cJ_{\varepsilon_k}^{i}v(s),D_i\varphi)\,ds\,dt,
$$
$$
F^3_k(v)=\int_0^Th(t)\int_0^t(\cJ_{\varepsilon_k}^{0}v(s),\varphi)\,ds\,dt, 
$$
$$
F^4_k(v)=\int_0^Th(t)\int_0^t(\cN_{\varepsilon_k} v(s),\varphi)\,ds\,dt,  
$$
$$
F^5_k(v)=\int_0^Th(t)\int_0^t(\cR_{\varepsilon_k} v(s),\varphi)\,ds\,dt. 
$$
For each $i$ define also the functional $F^i$ in the same way as $F^i_k$ is defined above, but with 
$a$, $b$, $c$, $\cJ^i$, $\cJ^0$, $\cN$ and $\cR$ 
in place of $a_{\varepsilon_k}$, $b^{(\varepsilon_k)}$, $c^{(\varepsilon_k)}$, 
$\cJ^i_{\varepsilon_k}$, $\cJ^0_{\varepsilon_k}$ and $\cN_{\varepsilon_k}$, 
$\cR_{\varepsilon_k}$, respectively.  
Clearly, due to the boundedness of $h$ we have a constant $C$ such that 
for all $v\in \mathbb W^1_p$
$$
F(v)\leq C|v|_{\mathbb L_p}|\varphi|_{L_q}
\leq C|v|_{\mathbb W^1_p}|\varphi|_{L_q}, 
$$
where $q=p/(p-1)$. This means  $F\in \mathbb W^{1\ast}_p$, 
the Banach space of bounded linear functionals on $\mathbb W^1_p$. 
To take the limit $k\to\infty$  in equation \eqref{eqek} 
we show below that  $F^i_k$ and $F^i$ are in $\mathbb W_p^{1\ast}$,  and 
$F^i_k\to F^i$ strongly in $(\mathbb W^1_p)^{\ast}$, 
for every $i$ as $k\to\infty$. 

Since the functions  $h$, $a_{\varepsilon}$, ${\bar b}^{(\varepsilon)}$ and $c^{(\varepsilon)}$ 
are in magnitude bounded by a constant, by H\"older's inequality we have  
$$
|F^1_k(v)|\leq N |v|_{\mathbb W^1_p}|\varphi|_{W^1_q}, 
$$
with a constant $N$ independent of $v$, which shows that $F^1_k\in \mathbb W^{1\ast}_p$ for all $k$. 
In the same way we get 
$F^1\in \mathbb W^{1\ast}_p$. Since $|h|$ is bounded by a constant, 
by simple estimates and using H\"older's inequality we have 
$$
|F^2_k(v)|\leq 
C\int_0^T\int_0^1\int_{Z}\int_{\bR^d}
(|v(s,x+\theta\eta_{s,z}^{(\varepsilon_k)}(x))|+|v(s,x)|)\bar\eta(z)|D\varphi(x)|\,dx\,\mu(dz)\,d\theta\,dt
$$
$$
\leq C(A+\mu^{1/p}(Z)|v|_{\mathbb W^1_p})B|\varphi|_{W^1_q}, 
$$
where 
$$
A^p=\int_0^T\int_0^1\int_{Z}\int_{\bR^d}
(|v(s,x+\theta\eta^{(\varepsilon_k)}(x))|^p\,dx\,\mu(dz)\,d\theta\,dt,   
\quad
B^q=\int_{Z}\bar\eta^q(z)\,\mu(dz)<\infty,
$$
and $C$ is a constant, independent of $v$. By the change of variable 
$$
y=\tau^{(\varepsilon_k)}_{\theta\eta}(x)=x+\theta\eta_{t,z}^{(\varepsilon_k)}(x), 
$$
and taking 
into account that by virtue of Corollary 
\ref{corollary epsilon} $|{\rm det}\,D(\tau_{\theta\eta}^{(\varepsilon)})^{-1}|$ 
is bounded by a constant, uniformly 
in $t,\theta,z$, we get 
$$
A\leq C\mu^{1/p}(Z)|v|_{\mathbb W^1_p}
$$
with a constant $C$ independent of $v$. 
Consequently, there is a constant $\bar C$ such that 
$$
|F^2_k(v)|\leq \bar C|v|_{\mathbb W^1_p}|\varphi|_{W^1_q} 
$$
for all $v\in\mathbb W^1_p$, i.e., $F^2_k\in \mathbb W^{1\ast}_p$ for every $k\geq1$. 
We can prove in the same way that $F^{2}$, $F^i_k\in \mathbb W^{1\ast}_p$ and 
$F^i\in \mathbb W^{1\ast}_p$ for $i=3,4$ and $k\geq1$.  It is easy to see 
that $F^5_k\in \mathbb W^{1\ast}_p$ and 
$F^5\in \mathbb W^{1\ast}_p$.  To prove $F^1_k\to F^1$ 
notice that since $h$ is bounded by a constant $N$,  we have 
$$
|F^1_k(v)-F^1(v)|\leq N\sum_{i=1}^3A^i_k(v)
$$
for all $k\geq1$ with 
$$
A^1_k(v):=\int_0^T\int_{\bR^d}|D_jv(s,x)||a^{ij}_{\varepsilon_k}(s,x)-a^{ij}(s,x)||D_i\varphi(x)|\,dx\,ds, 
$$
$$
A^2_k(v):=\int_0^T\int_{\bR^d}|v(s,x)||{\bar b}^{i(\varepsilon_k)}(s,x)-{\bar b}^{i}(s,x)||D_i\varphi(x)|)\,dx\,ds
$$
$$
A^3_k(v):=\int_0^T\int_{\bR^d}|v(s,x)||c^{(\varepsilon_k)}(s,x)-c(s,x)||\varphi(x)|\,dx\,ds. 
$$
By H\"older's inequality 
$$
\sup_{|v|_{\mathbb W^1_p}\leq1}A^1_k(v)\leq \big||a_{\varepsilon_k}-a||D\varphi|\big|_{\mathbb L_q},
\quad
\sup_{|v|_{\mathbb W^1_p}\leq1}A^2_k(v)\leq \big||b^{(\varepsilon_k)}-b||D\varphi|\big|_{\mathbb L_q}, 
$$
$$
\sup_{|v|_{\mathbb W^1_p}\leq1}A^3_k(v)\leq \big|(c^{(\varepsilon_k)}-c)\varphi\big|_{\mathbb L_q}, 
$$
where $\mathbb L_q=\mathbb W^0_{q,q}$. Letting here $k\to\infty$ we get 
$$
\lim_{k\to\infty}\sup_{|v|_{\mathbb W^1_p}\leq1}A^i_k(v)=0
$$
for $i=1,2,3$, by virtue of Lebesgue's theorem on dominated 
convergence, which proves that $F^1_k\to F^1$ strongly in $\mathbb W^{1\ast}_p$ as $k\to\infty$. 
Next notice that by using the boundedness of $h$ and by changing variables we have 
\begin{equation}                                                     \label{F2}
|F^2_k(v)-F^2(v)|\leq C(|B^1_k(v)|+|B^2_k(v)|)
\end{equation}
with
$$
B^1_k(v):=\int_0^T\int_0^1\int_{Z}(v(s),\gamma^k_{\theta,z}(s)-\gamma_{\theta,z}(s))\,
\mu(dz)\,d\theta\,ds,
$$
$$
B^2_k(v):=\int_0^T\int_0^1\int_{Z}(v(s),\delta^k_{z}(s)-\delta_{z}(s))\,
\mu(dz)\,d\theta\,ds, 
$$
and a constant $C$, independent of $v$ and $k$, where 
$$
\gamma^k_{\theta,z}(s,x)=\eta_{s,z}^{i(\varepsilon_k)}((\tau_{\theta\eta}^{(\varepsilon_k)})^{-1}(x))
\varphi_i
((\tau^{(\varepsilon_k)}_{\theta\eta})^{-1}(x))|{\rm det}\,D(\tau^{(\varepsilon_k)}_{\theta\eta})^{-1}(x)|, 
$$
$$
\gamma_{\theta,z}(s,x)=\eta^{i}_{s,z}(\tau^{-1}_{\theta\eta}(x))
\varphi_i(\tau^{-1}_{\theta\eta}(x))|{\rm det}\,D\tau^{-1}_{\theta\eta}(x)|, 
$$
$$
\delta^k_{z}(s,x)):=\eta^{i(\varepsilon_k)}_{s,z}(x)\varphi_i(x), 
\quad
\delta_{z}(s,x):=\eta^{i}_{s,z}(x)\varphi_i(x),   
$$
and $\varphi_i:=D_i\varphi$.
By H\"older's inequality
\begin{equation}                                               \label{B}
\sup_{|v|_{\mathbb W^1_p}\leq1}|B^1_k(v)|
\leq \mu^{\tfrac{1}{p}}(Z)
\Big(
\int_Z\int_0^1|\gamma^k_{\theta,z}-\gamma_{\theta,z}|^q_{\mathbb L_q}
\,\mu(dz)\,d\theta
\Big)^{1/q}
\end{equation}
$$
\sup_{|v|_{\mathbb W^1_p}\leq1}|B^2_k(v)|
\leq \mu^{\tfrac{1}{p}}(Z)
\Big(
\int_Z\int_0^1|\delta^k_{\theta,z}-\delta_{\theta,z}|^q_{\mathbb L_q}
\,\mu(dz)\,d\theta
\Big)^{1/q},  
$$
where $q=p/(p-1)\leq 2$. 
Observe that 
$$
|\tau(\tau^{(\varepsilon)})^{-1}(x)-x|
=|\tau(\tau^{(\varepsilon)})^{-1}(x)-\tau^{(\varepsilon)}(\tau^{(\varepsilon)})^{-1}(x))|
\leq 
\sup_{y\in\bR^d}|\tau(y)-\tau^{(\varepsilon)}(y)|
$$
\begin{equation*}                                                          \label{convergence0}
\leq\sup_{y\in\bR^d}|\eta^{(\varepsilon)}(y)-\eta(y)|
\leq \sup_{y\in\bR^d}\int_{\bR^d}|\eta(y-\varepsilon z)-\eta(y)|k(z)\,dz
\leq K\varepsilon,  
\end{equation*}
where, and later on, we write $\tau$ instead of $\tau_{\theta\eta}$ 
to ease notation. 
Hence 
$$
\lim_{\varepsilon\downarrow0}|\tau(\tau^{(\varepsilon)})^{-1}(x)-x|=0, 
$$
which implies that for every $s, \theta,z$
\begin{equation}                                                  \label{convergence1}
\lim_{\varepsilon\downarrow0}(\tau_{\theta\eta}^{(\varepsilon)})^{-1}(x)
=(\tau_{\theta\eta})^{-1}(x)
\quad\text{for every $x\in\bR^d$.}
\end{equation}
Similarly, 
$$
|D\tau^{(\varepsilon)}(\tau^{(\varepsilon)})^{-1}(x))-D\tau(\tau^{(\varepsilon)})^{-1}(x))|\leq 
\sup_{y\in\bR^d}|D\tau^{(\varepsilon)}(y)-D\tau(y)|
$$
$$
\leq \sup_{y\in\bR^d}\int_{\bR^d}|D\eta(y-\varepsilon z)-D\eta(y)|k(z)\,dz
\leq dK\varepsilon.
$$
Hence, taking into account \eqref{convergence1}, we have 
$$
\lim_{\varepsilon\downarrow0}
|D\tau^{(\varepsilon)}(\tau^{(\varepsilon)})^{-1}(x)-D\tau\tau^{-1}(x)|
$$
\begin{equation*}
\leq \limsup_{\varepsilon\downarrow0}
|D\tau^{(\varepsilon)}(\tau^{(\varepsilon)})^{-1}(x))-D\tau(\tau^{(\varepsilon)})^{-1}(x))|
+\lim_{\varepsilon\downarrow0}|D\tau(\tau^{(\varepsilon)})^{-1}(x))-D\tau(\tau^{-1}(x))|=0. 
\end{equation*}
Thus for every $s, \theta,z$ and $x$ 
$$
\lim_{\varepsilon\downarrow0}|{\rm{det}} D(\tau^{(\varepsilon)})^{-1}(x)|
=\lim_{\varepsilon\downarrow0}
|{\rm{det}} D\tau^{(\varepsilon)}(\tau^{(\varepsilon)})^{-1}(x)|^{-1}
$$
\begin{equation*}                                             \label{convergence2}
=|{\rm{det}} D\tau(\tau^{-1}(x))|^{-1}
=|{\rm{det}} D\tau^{-1}(x)|. 
\end{equation*}
Hence, using also 
 \eqref{convergence1}  we have 
$$
\lim_{k\to\infty}\gamma^k_{\theta,z}(s,x)=\gamma_{\theta,z}(s,x)
$$
for $\theta\in[0,1]$, $z\in Z$ and  $(s,x)\in H_T$. Observe also that 
\begin{equation}                                                                            \label{gammak}
|\gamma^k_{\theta,z}|^q_{\mathbb L_q}
=\int_0^T\int_{\bR^d}
|\eta^{i(\varepsilon_k)}(x)\varphi_i(x)|^q|{\rm det}\, 
D\tau^{(\varepsilon_k)}_{\theta\eta}(x)|^{1-q}\,dx\,ds, 
\end{equation}   
\begin{equation}                                                                            \label{gamma}
|\gamma_{\theta,z}|^q_{\mathbb L_q}
=\int_0^T\int_{\bR^d}
|\eta^{i}(x)\varphi_i(x)|^q|{\rm det}\, D\tau_{\theta\eta}(x)|^{1-q}\,dx\,ds.    
\end{equation}
By virtue of Corollary \ref{corollary epsilon} we can use here Lebesgue's theorem on dominated  
convergence to get  
$$
\lim_{k\to\infty}|\gamma^k_{\theta,z}|^q_{\mathbb L_q}=|\gamma_{\theta,z}|^q_{\mathbb L_q} 
$$
for each $\theta\in(0,1)$ and $z\in Z$. 
Thus by Lemma \ref{lemma Krylov} we have 
$$
\lim_{k\to\infty}|\gamma^k_{\theta,z}-\gamma_{\theta,z}|^q_{\mathbb L_q}=0
\quad 
\text{for every $(\theta,z)$}.
$$
Notice that by \eqref{gammak}-\eqref{gamma} and by virtue 
of Corollary \ref{corollary epsilon} the function  
$|\gamma^k_{\theta,z}-\gamma_{\theta,z}|^q_{\mathbb L_q}$ of $(\theta,z)$ can be estimated 
by a constant times $\bar\eta^{q}$, which has finite integral with respect to $\mu$.
Therefore letting $k\to\infty$ in \eqref{B} we obtain 
$$
\lim_{k\to\infty}B^1_k(v)=0
$$
by Lebesgue's theorem on dominated convergence. We get in the same way 
$$
\lim_{k\to\infty}B^2_k(v)=0. 
$$
Consequently, letting $k\to\infty$ in \eqref{F2} we get 
$$
\lim_{k\to\infty}\sup_{|v|_{\mathbb W^1_p}\leq 1}|F^2_k(v)-F^2(v)|=0, 
$$
which means $F^2_k\to F^2$ strongly in $\mathbb W^{1\ast}_p$. 
We can prove similarly that $F^i_k\to F^i$ strongly in $\mathbb W^{1\ast}_p$ for $i=3,4$. 
It is easy to see that this holds also for $i=5$. 
Thus due to the convergence of $u_{\varepsilon_k}$ to $u$ weakly in $\mathbb W^1_p$, we 
have 
$$
\lim_{k\to\infty}F(u_{\varepsilon_k})=F(u),
\quad 
\lim_{k\to\infty}F^{i}_k(u_{\varepsilon_k})=F(u)\quad \text{for $i=1,2,3,4,5$}.  
$$
Clearly,
$$
\lim_{k\to\infty}\int_0^T(\psi^{(\varepsilon_k)},\varphi)\,dt
=\int_0^T(\psi,\varphi)\,dt, 
$$
$$
 \lim_{k\to\infty}\int_0^T\int_0^t(f^{(\varepsilon_k)}(s),\varphi)\,ds\,dt
 =\int_0^T\int_0^t(f(s),\varphi)\,ds\,dt. 
$$
Thus taking $k\to\infty$ in equation \eqref{eqek} we obtain 
\begin{equation}                                                               \label{limit1}
F(u)=
\int_0^T(\psi,\varphi)h(t)\,dt
+\sum_{i=1}^5F^i(u)
+\int_0^T\int_0^t(f(s),\varphi)h(t)\,ds\,dt.  
\end{equation}
This means for every bounded real function $h$ the function $u:[0,T]\to W^1_p$ 
satisfies the equation
$$
	\int_0^Th(t)(u(t),\varphi)\,dt=\int_0^Th(t)(\psi,\varphi)\,dt
	+\int_0^Th(t)\int_0^t\langle\cA u(s),\varphi\rangle+(f(s),\varphi)\,ds \,dt
$$
for every $\varphi\in C_0^{\infty}(\bR^d)$. Thus for each $\varphi\in C_0^{\infty}$ 
equation \eqref{solution} holds for $dt$-almost every $t\in[0,T]$. Hence taking into account 
that $u\in L_p([0,T],W^1_p)$, by Lemma \ref{lemma Kr} $u$ has a modification, denoted 
also by $u$, which is continuous as an $L_p$-valued function and it is the solution 
of equation \eqref{eq1} with initial value $\psi$.

{\it Step 2.} We are going to dispense with the additional assumption 
that $\mu$ and $\nu$ are finite measures, i.e., we assume now 
that Assumptions \ref{assumption L} through \ref{assumption free} hold 
with $m\geq1$ and $f(t,x)$ and $\psi(x)$ vanish for $|x|\geq R$ for some $R>0$. 

Since $\mu$ and $\nu$  are $\sigma$-finite, there is a sequence 
$(Z_n)_{n=1}^{\infty}$ of sets $Z_n\in\cZ$ such that $Z_i\subset Z_{i+1}$ 
for $i\geq1$, $Z=\cup_{n=1}^{\infty}Z_n$, and $\mu(Z_n)<\infty$  
and $\nu(Z_n)<\infty$ for all $n$. For each $n$ define the measures $\mu_n$ 
and $\nu_n$  by 
$$
\mu_n(A)=\mu(A\cap Z_n),\quad \nu_n(A)=\nu(A\cap Z_n)\quad A\in\cZ,   
$$
and consider the equation 
\begin{equation}                                                                    
\frac{\partial}{\partial t} u(t,x)=(\mathcal{L}+\cM_n+\cN_n+\cR) u(t,x) +f(t,x)
\end{equation}
with initial condition $u(0)=\psi$, where $\cM_n$ and $\cN_n$ 
are defined as $\cM$ and $\cN$ 
in \eqref{def M} and in \eqref{def N}, but with $\mu_n$ and $\nu_n$ 
n place of $\mu$ and $\nu$, respectively. 
By virtue of Step 1 for each $n$ there is a solution $u_n$ 
to this problem in the sense that 
\begin{equation}                                                                   \label{phin}
F(u_n)=
\int_0^T(\psi,\varphi)h(t)\,dt+\Phi^{1}(u_n)
+\sum_{i=2}^4\Phi_n^i(u_n)+\Phi^5(u_n)
+\int_0^T\int_0^t(f(s),\varphi)h(t)\,ds\,dt  
\end{equation}
holds with arbitrary $\varphi\in C_0^{\infty}$ and $h\in L_{\infty}([0,T],\bR)$, where  
the functional $F$ is defined by \eqref{F}, as before, and 
$$
\Phi^1(v)=\int_0^Th(t)\int_0^t-(a^{ij}D_jv(s), D_i\varphi)
+({\bar b}^iD_iv(s)
+cv(s),\varphi)\,ds\,dt,
$$
$$
\Phi^2_n(v)=-\int_0^Th(t)\int_0^t(\cJ_n^{i}v(s),D_i\varphi)\,ds\,dt,
$$
$$
\Phi^3_n(v)=\int_0^Th(t)\int_0^t(\cJ_{n}^{0}v(s),\varphi)\,ds\,dt, 
$$
$$
\Phi^4_n(v)=\int_0^Th(t)\int_0^t(\cN_{n} v(s),\varphi)\,ds\,dt 
$$
$$
\Phi^5(v)=\int_0^Th(t)\int_0^t(\cR v(s),\varphi)\,ds\,dt 
$$
for $v\in \mathbb W^1_p$. Here $\cJ_{n}^{i}$ and  $\cJ_{n}^{0}$ are defined 
as $\cJ^{i}$ and $\cJ^{0}$ respectively in \eqref{def J0}, but with $\mu_n$ 
in place of $\mu$. Just as for $F^1_k$ before, 
we can see that $\Phi^{1}\in\mathbb W^{1\ast}_p$. 
Let $\Phi^j$ be defined as $\Phi^j_n$ for $j=2,3,4$
above, but with $\cJ^{i}$ and $\cN$ in place of $\cJ^{i}_n$ and $\cN_n$ 
for $i=0,1,...,d$ in their definition. 
By Step 1 $u_n\in \mathbb W^j_{p,r}$ for $j=0,1,...,m$, $p=2^k$  
for integers $k\geq1$, $r\in(1,\infty)$, 
and for all $n$ 
\begin{equation}                                                            \label{un estimate}
|u_n|^p_{\mathbb W^j_{p,r}}\leq N(|\psi|^p_{W^j_p}+|f|^p_{\mathbb W^j_{p}}), 
\quad \text{for $p=2^k$ and $j=0,1,...m$}
\end{equation}
with a constant $N=N(d, p, T, K, m,K_{\eta}, K_{\xi})$. 
By virtue of this estimate 
there is a subsequence of integers $n'\to\infty$,  such that  
$u_{n'}$ converges weakly 
to some $u$ in each $\mathbb W^j_{p,r}$ for 
$j=0,1,2,...,m$, $p=2^k$ for integers $k\geq1$ and integers $r>1$. 
Due to the weak convergence estimate \eqref{un estimate} remains valid 
also for $u$ with the same constant $N$, i.e. we have 
\begin{equation}                                        \label{estimate u}
|u|^p_{\mathbb W^j_{p,r}}\leq N(|\psi|^p_{W^j_p}+|f|^p_{\mathbb W^j_{p}}), 
\quad \text{for $p=2^k$ and $j=0,1,...m$}. 
\end{equation}

Observe 
that due to the boundedness of $h$ there is a constant $C$, independent of $v\in\mathbb W^1_p$ 
and $n$,  
such that 
\begin{align*}
\Phi_n^2(v)&\leq C
\int_0^T\int_0^1\int_{Z}\int_{\bR^d}
|\eta^k(x)(v(s,x+\theta\eta(x))-v(x))||D_k\varphi(x)|\,dx\,\mu_n(dz)\,d\theta\,ds\\&\leq C
\int_0^T\int_0^1\int_{Z}\int_{\bR^d}\int_0^1\theta
|\eta^k(x)\eta^i(x)D_iv(s,x+\vartheta\theta\eta(x))||D_k\varphi(x)|
\,d\vartheta\,dx\,\mu_n(dz)\,d\theta\,ds\\
&\leq C \int_0^1\theta\int_0^1\int_{Z}{\bar\eta}^2(z)
\int_0^T\int_{\bR^d}|D_iv(s,x+\vartheta\theta\eta(x))||D_k\varphi(x)|
\,dx\,ds\,\mu_n(dz)\,d\vartheta\,d\theta. 
\end{align*}
Hence, taking into account that with a constant $N$
$$
\int_0^T\int_{\bR^d}|D_iv(s,x+\vartheta\theta\eta(x)|^p\,dx\,ds\leq N|v|^p_{\mathbb W^1_p}
$$
for all $v\in \mathbb W^1_p$, by H\"older's inequality we get 
$$
\Phi_n^2(v)\leq N\int_{Z}{\bar\eta}^2(z)\,\mu_n(dz)|v|_{\mathbb W^1_p}|\varphi|_{W^1_q}
$$
with a constant $N$ independent of $v$ and $n$. In the same way we see that 
$$
\Phi^2(v)\leq N\int_{Z}{\bar\eta}^2(z)\,\mu(dz)|v|_{\mathbb W^1_p}|\varphi|_{W^1_q} 
$$
and 
$$
|\Phi^2(v)-\Phi^2_n(v)|
\leq N\int_{Z_n^{c}}{\bar\eta}^2(z)\,\mu(dz)|v|_{\mathbb W^1_p}|\varphi|_{W^1_q},  
$$
where $Z_n^c$ denotes the complement of $Z_n$. 
Hence $\Phi^2, \Phi^2_n\in \mathbb W^{1\ast}_p$ and $\Phi^2_n\to\Phi^2$ 
strongly in $\mathbb W^{1\ast}_p$.  Therefore due to the weak convergence 
$u_{n'}$ to $u$ in $\mathbb W^1_p$, we have 
$$
\lim_{n'\to\infty}\Phi^2_{n'}(u_{n'})=\Phi^2(u). 
$$ 
We can prove in the same way that  
$$
\lim_{n'\to\infty}\Phi^{i}_{n'}(u_{n'})=\Phi^{i}(u)\quad \text{for $i=3,4$}. 
$$
Consequently, letting $n'\to\infty$ in equation \eqref{phin}
we get  
$$
\int_0^Th(t)(u(t),\varphi)\,dt=\int_0^Th(t)(\psi,\varphi)\,dt
$$
$$
+\int_0^Th(t)\int_0^t\langle\cA u(s),\varphi\rangle
+(f(s),\varphi)\,ds\,dt 
$$
for every $\varphi\in C_0^{\infty}(\bR^d)$ and $h\in L_{\infty}([0,T],\bR)$. 

{\it Step 3.} Now we dispense with the additional assumption that $\psi$ and $f$ 
vanish for $|x|\geq R$ for some $R>0$. 
Let  $\psi\in W^m_p$ and $f\in L_p([0,T],W^m_p)$ for $p=2^k$ for some integer 
$k\geq1$. Then for integers $n\geq1$ define 
$\psi^{n}$ and $f^n$ by
$$
\psi^{n}(x)=\psi(x)\chi_n(x),\quad f^n(t,x)
=f(t,x)\chi_n(x),\quad t\in[0,T], \,x\in\bR^d, 
$$
where $\chi_n(\cdot)=\chi(\cdot/n)$ with a nonnegative function 
$\chi\in C_0^{\infty}(\bR^d)$,  
such that $\chi(x)=1$ for $|x|\leq 1$ and $\chi(x)=0$ for $|x|\geq2$. 
Then by virtue of Step 2 equation \eqref{eq1} with $f^n$ in place of $f$ and with initial 
condition $u(0)=\psi^n$ has a solution $u^n$, i.e., 
 $$
\int_0^Th(t)(u^n(t),\varphi)\,dt=\int_0^Th(t)(\psi^n,\varphi)\,dt
$$
\begin{equation}                                                 \label{eq3}
+\int_0^Th(t)\int_0^t\langle\cA u^n(s),\varphi \rangle
+(f^n(s),\varphi)\,ds\,dt 
\end{equation}
for every $\varphi\in C_0^{\infty}(\bR^d)$ and $h\in L_{\infty}([0,T],\bR)$. 
We also have estimate \eqref{estimate u} with $u^n$, $\psi^n$ and $f^n$ 
in place of $u$, $\psi$ and $f$, respectively. Hence for any $n$ and $k$
\begin{equation*}                                                                      
|u^n-u^k|^p_{\mathbb W^j_{p,r}}
\leq N(|\psi^n-\psi^k|^p_{W^j_p}+|f^n-f^k|^p_{\mathbb W^j_{p}})
\end{equation*}
which shows that $u^n$ is a Cauchy sequence in $\mathbb W^j_{p,r}$, and hence 
it converges in the norm of $\mathbb W^j_{p,r}$ to some $u\in\mathbb W^j_{p,r}$ 
for every $j=0,1,2,...,m$ and integers $r>1$. It is easy to pass to the limit in 
equation \eqref{eq3} and see that $u$ solves equation \eqref{eq1} with 
initial and free data $\psi$ and $f$. Clearly, $u$ satisfies 
also the estimate \eqref{estimate u}.

\medskip
Set $\Psi^m_p:=H^m_p$, $\bF^m_p:=L_p([0,T],H_p^m)$  and 
$\mathbb U^m_{p}:=L_{r}([0,T], H^m_p)$ for $m\in[1,\infty)$, $p\in[2,\infty)$ 
and for fixed $r>1$,  and denote by $\bS$ the operator that assigns  
the solution $u$ of equation \eqref{eq1} to $(\psi,f)$, the pair of initial and free data. 
By virtue of Step 3 we know that $\bS$ is a continuous linear operator 
from $\Psi^m_p\times\bF^m_p$ into $\mathbb U^m_{p}$ for 
$p=2^k$, with integers $k\geq1$, for every integer $m\geq1$, 
with operator norm, depending only on $p$, $d$, $T$ and on the constants 
$K$, $K_{\eta}$ and $K_{\xi}$. To show that this holds also for any $p\in[2,\infty)$ 
and any $m\in(1,\infty)$, we use some results from the theory 
of complex interpolation of Banach spaces.  

A pair of complex Banach spaces $A_0$ and $A_1$, which are 
continuously embedded into a Hausdorff  topological vector space $\cH$, is called an interpolation 
couple, and 
$[A_0,A_1]_{\theta}$ denotes the complex interpolation space   
between $A_0$ and $A_1$ with parameter $\theta\in(0,1)$. For an interpolation couple 
$A_0$ and $A_1$ the notation $A_0+A_1$ is used for subspace of 
vectors in $\cH$, 
$\{v_0+v_{1}:v_0\in A_0, \,v_1\in A_1\}$, equipped with the norm 
$$
|v|_{A_0+A_1}:=\inf\{|v_0|_{A_0}+|v_1|_{A_1}
:v=v_0+v_1,v_0\in A_0, \,v_1\in A_1\}. 
$$
Then the following statements hold (see 1.9.3, 1.18.4 and 2.4.2 from \cite{T}).

\begin{enumerate}[(i)]
\item 
If $A_0,A_1$ and $B_0,B_1$ are two interpolation couples and 
$S:A_0+A_1\to B_0+B_1$ is a linear operator such that 
its restriction onto $A_i$ is a continuous operator into $B_i$ with operator norm $C_i$ 
for $i=0,1$, then its restriction onto $A_{\theta}=[A_0,A_1]_{\theta}$ is a continuous operator into 
$B_{\theta}=[B_0,B_1]_{\theta}$ with operator norm $C_0^{1-\theta}C_1^{\theta}$ for every $\theta\in(0,1)$.  
\item 
For a measure space $\mathfrak M$ and $1< p_0,p_1<\infty$, 
$$
[L_{p_0}(\mathfrak M,A_0),L_{p_1}(\mathfrak M,A_1)]_{\theta}=L_{p}(\mathfrak M,[A_0,A_1]_{\theta}),
$$ 
for every $\theta\in(0,1)$, where $1/p=(1-\theta)/p_0+\theta/p_1$.
\item 
For $m_0,m_1\in\bR$, $1<p_0,p_1<\infty$,
$$
[H^{m_0}_{p_0},H^{m_1}_{p_1}]_{\theta}=H^{m}_{p},
$$
where $m=(1-\theta)m_0+\theta m_1$, 
and $1/p=(1-\theta)/p_0+\theta/p_1$.
\item For $\theta\in[0,1]$ there is a constant 
$c_{\theta}$ such that 
$$
|v|_{A_{\theta}}\leq c_{\theta}|v|_{A_0}^{1-\theta}|v|^{\theta}_{A_1}
$$
for all $v\in A_0\cap A_1$. 
\end{enumerate}
Now for an arbitrary $p\geq2$ we take an integer $k\geq1$ and a parameter 
$\theta\in[0,1]$ such that $p_0=2^k\leq p\leq 2^{k+1}=p_1$ 
and $1/p=(1-\theta)/p_0+\theta/p_1$.  
By property (ii) we have
$$
\Psi^m_p=[\Psi^m_{p_0},\Psi^m_{p_1}]_{\theta}=H^m_p, 
\quad
\mathbb{F}^m_p
=[\mathbb{F}^m_{p_0},\mathbb{F}^m_{p_1}]_{\theta}=L_p([0,T],H^m_p),
$$
$$
\mathbb{U}^m_p
=[\mathbb{U}^m_{p_0},\mathbb{U}^m_{p_1}]_{\theta}=L_r([0,T],H^m_p),
$$
and therefore by (i) the solution operator $\mathbb S$ is continuous for any $p\geq 2$  
and integer $m\geq0$.

When $s\in(0,m]$ is not an integer then we set $\theta=s-\lfloor s\rfloor$.  
Then by (ii) and (iii) 
$$
\Psi^s_p=[\Psi^{\lfloor s\rfloor}_{p},\Psi^{\lceil s\rceil}_{p}]_{\theta}=H^s_p,
\quad 
\mathbb{F}^s_p
=[\mathbb{F}^{\lfloor s\rfloor}_{p},\mathbb{F}^{\lceil s\rceil}_{p}]_{\theta}
=L_p([0,T],H^s_p),
$$
$$
\mathbb{U}^s_p
=[\mathbb{U}^{\lfloor s\rfloor}_{p},\mathbb{U}^{\lceil s\rceil}_{p}]_{\theta}
=L_r([0,T],H^s_p)
$$
for every $p\geq2$ and integers $r>1$. 
We have seen above that under the Assumptions \ref{assumption L},  
\ref{assumption M} and \ref{assumption N} with $m\geq1$, 
the solution operator $\bS$ is continuous from 
$\Psi^{\lceil m\rceil}_{p}
\times 
\mathbb{F}^{\lceil m\rceil}_{p}$
to $\mathbb {U}^{\lceil m\rceil}_{p,r}$, 
and from 
$\Psi^{\lfloor m\rfloor}_{p}\times \mathbb{F}^{\lfloor m\rfloor}_{p}$ 
to $\mathbb{U}^{\lfloor m\rfloor}_{p,r}$. 
Hence by (i) again for the solution $u$ we have  
\begin{equation}                                        \label{estimate_ps}
\left(\int_0^T|u(t)|^r_{H^s_{p}}\,dt\right)^{1/r}
\leq N(|\psi|_{H^s_p}+|f|_{\mathbb H^s_{p}})
\end{equation}
with a constant $N=(p,d,m,T,K,K_{\eta},K_{\xi})$. 
Letting here $r\rightarrow\infty$ we obtain 
\begin{equation}                                        \label{spestimate}
\esssup_{t\in[0,T]}|u(t)|_{H^s_{p}}
\leq N(|\psi|_{H^s_p}+|f|_{\mathbb H^s_{p}}). 
\end{equation}

By Lemma \ref{lemma Kr} we already know 
that the solution $u$ is in $C([0,T],H^0_p)$. To show that it is weakly 
continuous as an $H^m_p$-valued function 
we use the following lemma. 
\begin{lemma}                                                                            \label{lemma wcontinuity}
Let $V$ be a reflexive Banach space, 
embedded continuously and densely into a Banach space $U$.  
Let $f$ be a $U$-valued weakly continuous function on $[0,T]$ 
and assume 
there is a dense subset $S$ of $[0,T]$ such that $f(s)\in V$ for $s\in S$ 
and $\sup_{s\in S}|f(s)|_V<\infty$. 
Then $f$ is a $V$-valued function, which is continuous in the weak topology of $V$.
\end{lemma}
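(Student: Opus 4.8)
The plan is to leverage the reflexivity of $V$ together with the elementary fact that a continuous embedding $V\hookrightarrow U$ dualizes to a continuous map $U^{\ast}\to V^{\ast}$, $\ell\mapsto\ell|_V$ (which is moreover injective since $V$ is dense in $U$). The consequence I will use repeatedly is: if $v_n\rightharpoonup v$ weakly in $V$, then for each $\ell\in U^{\ast}$ one has $\ell(v_n)=(\ell|_V)(v_n)\to(\ell|_V)(v)=\ell(v)$, so $v_n\rightharpoonup v$ weakly in $U$ as well; and weak limits in the Banach space $U$ are unique.

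First I would show that $f$ actually takes values in $V$, with a uniform bound. Fix $t\in[0,T]$, pick $s_n\in S$ with $s_n\to t$ (possible since $S$ is dense and $[0,T]$ is metrizable), and set $C:=\sup_{s\in S}|f(s)|_V<\infty$. The sequence $(f(s_n))$ is bounded by $C$ in the reflexive space $V$, so along a subsequence $f(s_{n_k})\rightharpoonup g$ weakly in $V$ for some $g\in V$ with $|g|_V\le\liminf_k|f(s_{n_k})|_V\le C$. By the observation above, $f(s_{n_k})\rightharpoonup g$ weakly in $U$; but weak continuity of $f$ in $U$ gives $f(s_{n_k})\rightharpoonup f(t)$ weakly in $U$, and uniqueness of weak limits in $U$ forces $f(t)=g\in V$ and $|f(t)|_V\le C$. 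Hence $f$ is $V$-valued and $\sup_{t\in[0,T]}|f(t)|_V\le C$.

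Next I would establish continuity in the weak topology of $V$. Let $t_n\to t$ in $[0,T]$; by the previous step $(f(t_n))$ is bounded in $V$. I will invoke the subsequence principle (valid in any topological space, in particular for the weak topology of $V$): it suffices to check that every subsequence of $(f(t_n))$ admits a further subsequence converging weakly in $V$ to $f(t)$. Given any subsequence, reflexivity of $V$ together with the bound yields a further subsequence $f(t_{n_k})\rightharpoonup g$ weakly in $V$; arguing exactly as before, this gives $f(t_{n_k})\rightharpoonup g$ weakly in $U$, while weak continuity of $f$ in $U$ gives $f(t_{n_k})\rightharpoonup f(t)$ weakly in $U$, whence $g=f(t)$. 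Therefore $f(t_n)\rightharpoonup f(t)$ weakly in $V$, which is the assertion.

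The only point requiring care — and the one I would regard as the crux — is the compatibility of the two weak topologies combined with the extraction of weak-$V$ limits: one must use simultaneously that bounded sequences in $V$ have weakly convergent subsequences (reflexivity) and that such a weak-$V$ limit is pinned down by testing only against the a priori smaller dual $U^{\ast}$ (continuity and density of the embedding, plus uniqueness of weak limits in $U$). Once these ingredients are set up, the rest is just the routine uniqueness-of-weak-limit and subsequence arguments.
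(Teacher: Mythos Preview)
Your argument is correct. The first step (showing $f$ is $V$-valued with a uniform bound) matches the paper's proof almost verbatim. For the second step (weak continuity in $V$), the paper takes a different route: it uses that reflexivity of $V$ together with the dense embedding $V\hookrightarrow U$ forces $U^{\ast}$ to be dense in $V^{\ast}$, and then runs an $\varepsilon$-approximation argument, splitting $\phi\in V^{\ast}$ as $\phi_\varepsilon+(\phi-\phi_\varepsilon)$ with $\phi_\varepsilon\in U^{\ast}$ and controlling the remainder via the uniform bound $\sup_{t}|f(t)|_V$. Your approach instead recycles the compactness-plus-identification mechanism from the first step, combined with the subsequence principle; this is arguably cleaner since it avoids the dual-density statement and the approximation step altogether, while the paper's route makes the interplay between $U^{\ast}$ and $V^{\ast}$ more explicit.
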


\begin{proof}
Since $S$ is dense in $[0,T]$, for a given $t\in[0,T]$ there is a sequence 
$\{t_n\}_{n=1}^{\infty}$ with elements in $S$ such that $t_n\rightarrow t$. 
Due to $\sup_{n\in\N}|f(t_n)|_V<\infty$ and the reflexivity of $V$ there is  a subsequence $\{t_{n_k}\}$ 
such that $f(t_{n_k})$ converges weakly in $V$ to some element $v\in V$. Since $f$ is weakly continuous in $U$, for every continuous linear functional $\varphi$ over $U$ 
we have $\lim_{k\to\infty}\varphi(f(t_{n_k}))=\varphi({f}(t))$. 
Since the restriction of $\varphi$ in $V$ is a continuous 
functional over $V$ we have $\lim_{k\to\infty}\varphi(f(t_{n_k}))=\varphi(v)$. 
Hence $f(t)=v$, which proves that $f$ is a $V$-valued function. Moreover,   
by taking into account that 
$$
|f(t)|_V=|v|_V\leq\liminf_{k\to\infty}|f(t_{n_k})|_V
\leq \sup_{t\in S}|f(t)|_V<\infty,
$$ we obtain $K:=\sup_{t\in[0,T]}|f(s)|_V<\infty$.
Let $\phi$ be a continuous linear functional over $V$. Due to the reflexivity of $V$, the dual $U^*$ 
of the space $U$ is densely embedded into $V^*$, the dual of $V$. Thus for $\phi\in V^{\ast}$ and $\varepsilon>0$ there is 
$\phi_\varepsilon\in U^*$ such that $|\phi-\phi_\varepsilon|_{V^*}\le \varepsilon$. Hence 
$$
|\phi(f(t))-\phi(f(t_n))|\leq |\phi_\varepsilon(f(t)-f(t_n))|+|(\phi-\phi_\varepsilon)(f(t)-f(t_n))|
$$
$$
\leq|\phi_\varepsilon(f(t)-f(t_n))|+\varepsilon|f(t)-f(t_n)|_V
\leq |\phi_\varepsilon(f(t)-f(t_n))|+2\varepsilon K.
$$
Letting here $n\to\infty$ and then $\varepsilon\to0$, we get
$$
\limsup_{n\to\infty}|\phi(f(t))-\phi(f(t_n)) |\leq0,  
$$
which completes the proof of the lemma. 
\end{proof}
Clearly, $u$ is weakly continuous as an $H^0_p$-valued function. 
Hence applying Lemma \ref{lemma wcontinuity} with $V=H^m_p$ and $U=H^0_p$,    
by using \eqref{spestimate} with $s=m$, we obtain that $u$ is weakly continuous 
as an $H^m_p$-valued function. Thus by virtue of \eqref{spestimate} 
we have 
\begin{equation}                                                  \label{sup}
\sup_{t\in[0,T]}|u(t)|_{H^s_{p}}
\leq N(|\psi|_{H^s_p}+|f|_{\mathbb H^s_{p}}) 
\end{equation}
for all $s\in[0,m]$ and $p\geq2$ with a constant $N=N(m,p,d,K,K_{\xi},K_{\eta},T)$.

To show that $u$ is strongly continuous as an $H^s_p$-valued 
function for any $s<m$, notice that by the {\it multiplicative inequality} (iv) we have a constant $c$ 
such that for any sequence $t_n\to t$ in $[0,T]$ we have 
\begin{equation}                                                   \label{ws}
|u(t)-u(t_n)|_{H^s_p}\leq c|u(t)-u(t_n)|^{(m-s)/m}_{L_p}|u(t)-u(t_n)|^{s/m}_{H^m_p}. 
\end{equation}
Letting here $n\to\infty$ we get 
$\lim_{n\to\infty}|u(t)-u(t_n)|_{H^s_p}=0$ by using \eqref{sup} and the strong continuity of $u$ 
as an $L_p$-valued function.  
This shows that $u\in C([0,T],H^s_p)$ for every $s<m$ and finishes  
the proof of Theorem \ref{theorem main} for $V^m_p:=H^m_p$. 

Consider now the case $V^m_p:=W^m_p$. Since for integers $m\geq0$ 
the spaces $H^m_p$ and $W^m_p$ are the same as vector spaces equipped with equivalent 
norms for any $p\geq1$, we need only consider the case when $m$ is not 
an integer and $p\geq2$ is a real number. We will make use of the following facts about the interpolation spaces 
$(A_0,A_1)_{\theta,q}$ with parameters $\theta\in(0,1)$ and $q\in[1,\infty]$, 
obtained by real interpolation methods from an interpolation couple of Banach spaces  
$A_0$ and $A_1$ (see 1.3.3 in \cite{T}).  
\begin{enumerate}[(a)]
\item
If $A_0,A_1$ and $B_1,B_2$ are two interpolation couples and 
$S:A_0+A_1\to B_0+B_1$ is a linear operator such that 
its restriction onto $A_i$ is a continuous operator into $B_i$ with operator norm $C_i$ 
for $i=0,1$, then its restriction onto $A_{\theta,q}=(A_0,A_1)_{\theta,q}$ 
is a continuous operator into 
$B_{\theta,q}=(B_0,B_1)_{\theta,q}$ 
with operator norm $C_0^{1-\theta}C_1^{\theta}$ for every $\theta\in(0,1)$ and 
$q\in[1,\infty]$. 
 \item 
For a measure space $\mathfrak M$ for $p_0,p_1\in(1,\infty)$ we have 
$$
(L_{p_0}(\mathfrak M,A_0),L_{p_1}(\mathfrak M,A_1))_{\theta,p}=L_{p}(\mathfrak M,(A_0,A_1)_{\theta,p}) 
$$ 
for every $\theta\in(0,1)$, where $1/p=(1-\theta)/p_0+\theta/p_1$.
\item
For $s_0,s_1\in(0,\infty)$, $s_0\neq s_1$ 
$$
(W^{s_0}_p,W^{s_1}_p)_{\theta,p}
=W^s_p\quad\text{for $\theta\in(0,1)$ and $p\in(1,\infty)$}  
$$
when $s:=(1-\theta)s_0+\theta s_1$ is not an integer.  
\item 
For $\theta\in(0,1)$ and $q\in[1,\infty]$ there is a constant 
$c_{\theta,q}$ such that 
$$
|v|_{A_{\theta,q}}\leq c_{\theta,q}|v|_{A_0}^{1-\theta}|v|^{\theta}_{A_1}
$$
for all $v\in A_0\cap A_1$. 
\end{enumerate}
\medskip
\noindent
For a fixed $t\in[0,T]$ consider the operator $\bS(t)$ mapping $(\psi,f)\in W^n_p\times L([0,T],W^n_p)$ 
to $u(t)\in W^n_p$, the solution of equation \eqref{eq1} at time $t$. 
We already know that $\bS(t)$ is a bounded 
operator for $p\geq2$ and integers $n\in[0,m]$, and its norm can be estimated 
by the right-hand side of  \eqref{sup} in this case. When $n=s\geq0$ is not an integer, 
then we set $\theta=s-\lfloor s\rfloor$.  Then using (b) and (c) we have 
$$
[W^{\lfloor s\rfloor }_p, W_p^{\lceil s\rceil }]_{\theta,p}=W^s_p, 
\quad 
[L_p([0,T],W^{\lfloor s\rfloor }_p), L_p([0,T],W^{\lceil s\rceil}_p)]_{\theta,p}=L_p([0,T],W^{s}_p), 
$$
and by (a) we get that $u(t)\in W^s_p$ for every $t\in[0,T]$ and $s\in[0,m]$. Moreover, we have  
$$
\sup_{t\in[0,T]}|u(t)|^p_{W^s_p}\leq N|\psi|^p_{W^s_p}+N\int_0^T|f(t)|^p_{W^s_p}\,dt
$$
for every $s\in[0,m]$ and $p\geq2$. Hence taking into account that $u$ is strongly continuous 
in $t$ as an $L_p$-valued function, by (c) we get that it is (strongly) continuous as a $W^s_p$-valued function 
for every $s<m$. Moreover, using Lemma \ref{lemma wcontinuity} with $V=W^m_p$ and $U=L_p$ 
it follows that $u$ is weakly continuous as a $W^m_p$-valued function.

\subsection{Uniqueness of the generalised solution.}
First we assume that $\cR=0$ in equation \eqref{eq1} and 
denote by $\cU$  the set of exponents $p\geq2$ such that 
the statement of Theorem \ref{theorem main} 
on the uniqueness of the solution to equation \eqref{eq1} (with $\cR=0$) 
holds with $p$. Then the following 
proposition completes the proof of the uniqueness for any $p\geq2$. 

\begin{proposition}                                                 \label{proposition uniqueness}
If $q\in\cU$, then $[q,q+d^{-1}]\subset\cU$. 
\end{proposition}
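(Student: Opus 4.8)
The plan is to reduce uniqueness at an exponent $p\in(q,q+d^{-1}]$ to uniqueness at $q$ by conjugating equation \eqref{eq1} with multiplication by a slowly decaying weight, which converts an $L_p$-solution into an $L_q$-solution of an equation of the same class. Assume first $\cR=0$ and let $w$ be a generalised solution of \eqref{eq1} with $\psi=0$, $f=0$; thus $w\in C([0,T],L_p)\cap L_p([0,T],W^1_p)$, $w(0)=0$, and we must show $w\equiv 0$. Put $\varrho(x)=(1+|x|^2)^{-\sigma/2}$ with $\sigma>0$ to be chosen, and set $\hat w:=\varrho\,w$. Since $|D^k\varrho|\le C_k\sigma\varrho$ for all $k$, H\"older's inequality gives $|\hat w(t)|_{W^1_q}\le C|w(t)|_{W^1_p}\big(\int_{\bR^d}\varrho^{qp/(p-q)}\,dx\big)^{1/q-1/p}$, which is finite as soon as $\sigma>d(1/q-1/p)$. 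Because $p\le q+d^{-1}$ forces $1/q-1/p\le \tfrac{1}{q(dq+1)}<d^{-1}$, such a $\sigma$ exists and, moreover, may be chosen with $\sigma\le1$; with this choice $\hat w\in C([0,T],L_q)\cap L_q([0,T],W^1_q)$ and $\hat w(0)=0$.

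Next I would check that $\hat w$ is a zero-data generalised solution of $\partial_t\hat w=\hat\cA\hat w$, with $\hat\cA\varphi:=\varrho\,\cA(\varrho^{-1}\varphi)$ (this follows from \eqref{solution} by testing against $\varrho\varphi\in C_0^\infty$), and that $\hat\cA$ still belongs to the class of Theorem \ref{theorem main} with $m=0$. For $\cL$ one gets $\varrho\,\cL(\varrho^{-1}\,\cdot\,)=a^{ij}D_{ij}+\hat b^iD_i+\hat c$ with $\hat b^i=b^i-2a^{ij}D_j\varrho/\varrho$ and a similarly modified $\hat c$; since $|D^k\varrho/\varrho|\le C_k\sigma$, the new coefficients and the derivatives of them required in Assumption \ref{assumption L} stay bounded (with a constant depending on $\sigma$), and the principal part $(a^{ij})$, hence the degeneracy, is untouched. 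For $\cN$ a short computation gives
$$
\varrho\,\cN(\varrho^{-1}\hat w)(x)=\cN\hat w(x)+\int_Z\big(\kappa_{t,z}(x)-1\big)\hat w(x+\xi_{t,z}(x))\,\nu(dz),\qquad \kappa_{t,z}(x)=\frac{\varrho(x)}{\varrho(x+\xi_{t,z}(x))},
$$
and $|\kappa_{t,z}(x)-1|\le C\big(\sigma\bar\xi(z)+(1+\bar\xi(z))^{\sigma}\mathbf 1_{\{\bar\xi(z)>1\}}\big)$, which for $\sigma\le1$ is $\nu$-integrable in $z$ uniformly in $x$ by $\int_Z\bar\xi\,d\nu<\infty$; the correction is therefore a bounded operator on $L_q$, i.e. a ``zero order'' term. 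The identity for $\cM$ produces, besides $\cM\hat w$, a correction of the form $\int_Z\big[c^1_{t,z}(x)(\hat w(x+\eta)-\hat w(x))+c^0_{t,z}(x)\hat w(x+\eta)\big]\mu(dz)$ whose coefficients are $O(\sigma\bar\eta^2(z))$ after using $\varrho(x)/\varrho(x+\eta)-1=-\eta^iD_i\varrho/\varrho+O(\bar\eta^2)$; treating its first part through $\cJ$-type operators as in Remark \ref{remark ibp}, it is again an admissible perturbation. Hence $\hat\cA=\cL'+\cM+\cN+\cR'$ with $\cL'$, $\cR'$ of the required type, so the hypothesis $q\in\cU$, applied to the admissible equation $\partial_t\hat w=\hat\cA\hat w$, gives $\hat w\equiv0$, i.e. $w\equiv0$. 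For $\cR\ne0$ the operator $\varrho\,\cR(\varrho^{-1}\,\cdot\,)$ is still bounded on each $W^n_q$ (multiplication by $\varrho^{\pm1}$ is bounded on $W^n_q$ by Lemma \ref{lemma chainrule2}), so it too is of ``zero order'' and the argument is unchanged.

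The step I expect to be the main obstacle is verifying that $\varrho\,\cM(\varrho^{-1}\,\cdot\,)$ and $\varrho\,\cN(\varrho^{-1}\,\cdot\,)$ remain of the admissible form with the $\mu$- and $\nu$-integrability of Assumptions \ref{assumption M} and \ref{assumption N} preserved; this is exactly where the smallness of $1/q-1/p$ forced by $p\le q+d^{-1}$ is used, since it is what allows the decay exponent $\sigma$ of the weight to be simultaneously large enough for the gain $w\in L_p\Rightarrow \varrho w\in L_q$ and small enough ($\sigma\le1$) to keep $\int_Z|\kappa_{t,z}-1|\,\nu(dz)$ and the analogous $\cM$-quantity finite uniformly in $x$. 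Once this is in place, the reduction to the hypothesis $q\in\cU$ is immediate, and iterating the proposition starting from $2=2^1\in\cU$ covers every $p\ge2$.
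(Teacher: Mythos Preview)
Your weight-conjugation strategy is different from the paper's and, as written, has a genuine gap in the treatment of the $\cM$-correction. From
\[
\varrho\, J^\eta(\varrho^{-1}\hat w)=J^\eta\hat w+(\kappa^\eta-1)\hat w(\cdot+\eta)+\eta\!\cdot\!\tfrac{\nabla\varrho}{\varrho}\,\hat w,
\qquad \kappa^\eta=\varrho/\varrho(\cdot+\eta),
\]
and the Taylor expansion $\kappa^\eta-1=-\eta\!\cdot\!\nabla\varrho/\varrho+O(\bar\eta^2)$, the correction rearranges to
\[
\underbrace{(\kappa^\eta-1)}_{=O(\sigma\bar\eta)}\,I^\eta\hat w\;+\;O(\bar\eta^2)\,\hat w .
\]
Thus $c^1=O(\sigma\bar\eta)$, not $O(\sigma\bar\eta^2)$ as you claim. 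The resulting term $\int_Z c^1\,I^\eta\hat w\,\mu(dz)$ is a first-order nonlocal operator: it maps $W^1_q\to L_q$ (via $|I^\eta\hat w|_{L_q}\le C\bar\eta|\hat w|_{W^1_q}$ and $\int\bar\eta^2\,d\mu<\infty$) but is \emph{not} bounded on $L_q$, so it cannot be absorbed into $\cR'$ in the sense of Assumption~\ref{assumption R}. Nor does it fit Assumption~\ref{assumption N}, since that would require $\int\bar\eta\,d\mu<\infty$, which is not assumed. Without a zero-order structure you cannot run the Gronwall step, and the key inequality $Q^{(3)}(\hat w)\le N|\hat w|_{L_q}^q$ of Proposition~\ref{proposition Qform} is not available for this perturbed operator. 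A second, smaller issue: $q\in\cU$ is, by definition, uniqueness for equations with $\cR=0$, so even if your $\cR'$ were genuinely zero-order you would still need the extra Gronwall argument (the one the paper carries out only \emph{after} the proposition) before invoking $q\in\cU$.

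The paper avoids all of this by using a compactly supported cutoff $\chi_R(x)=\chi(x/R)$ instead of a weight. Then $v_R:=\chi_R v$ satisfies the \emph{same} equation $\partial_t v_R=\cA v_R+f_R$, $v_R(0)=0$, where the commutator forcing $f_R$ depends on the original $v\in\bW^1_p$ (not on $v_R$) and is estimated in $\bL_q$ via a dedicated lemma: with $r=d(p-q)/(pq)$,
\[
|f_R|_{\bL_q}\le C\,R^{\,r-1}\,|v|_{\bW^1_p}.
\]
Since $q\in\cU$ gives uniqueness for the unchanged $\cA$, the existence estimate yields $|v_R|_{\bL_q}\le N|f_R|_{\bL_q}$. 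The point of the window $p\in[q,q+d^{-1}]$ is exactly that $r-1<0$, so $|v_R|_{\bL_q}\to0$ as $R\to\infty$, whence $v=0$. The crucial difference is that the paper never modifies $\cA$; the cutoff produces a \emph{forcing term with a large parameter} rather than a fixed first-order perturbation of the operator.
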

Indeed, since we know $2\in\cU$, a repeated application of this proposition 
gives $\cU=[2,\infty)$, i.e., the uniqueness holds for every $p\geq2$ when $\cR=0$. Now we show 
that hence the uniqueness of the solution 
also in the general case. To this end consider the full equation \eqref{eq1}, 
and let $u_i\in L_p([0,T],W^1_p)=\bW^1_p$  be, for $i=1,2$, generalised solutions to it. Then 
$v=u_1-u_2$ is the unique generalised solution in $\bW^1_p$ to equation \eqref{eq1} with 
$v(0)=0$, 
$\cR=0$ and $f=\cR v$, on any interval $[0,t]$, $t\in[0,T]$. 
Hence by the estimate for the solution we constructed 
in the proof of the existence the solutions, and by 
Assumption \ref{assumption R}, we have a constant $N=N(K,d,T,K_{\eta},K_{\xi})$ such that 
$$
|v(t)|^p_{L_p}\leq N\int_0^t|\cR v(s)|_{L_p}^p\,ds\leq NK^p\int_0^t|v|^p_{L_p}\,dt, \quad t\in[0,T], 
$$
which by Gronwall's lemma implies $|v(t)|_{L_p}=0$ for $t\in[0,T]$. 

To prove the proposition we use for every $R\geq1$ a smooth cutting function 
$\chi_R$, defined by $\chi_R(x)=\chi(x/R)$, $x\in\bR^d$, where 
$\chi$ is a nonnegative smooth function on $\bR^d$ 
such that $\chi(x)=1$ for $|x|\leq1$ and it vanishes for $|x|\geq2$. We introduce also 
 linear operators $F_R=F_R(t)$ and $G_R=G_R(t)$ defined on $W^1_p$ for each 
 $t\in[0,T]$ as follows: 
\begin{equation}                                                                            \label{FG}
F_Rv=\int_ZI^{\eta}\chi_RI^{\eta}v\,\mu(dz), \quad 
G_Rv=\int_ZI^{\xi}\chi_RT^{\xi}v\,\mu(dz), 
\end{equation}
where $T^{\xi}\varphi(x)=\varphi(x+\xi_{t,z}(x))$, $I^{\xi}\varphi(x)=\varphi(x+\xi_{t,z}(x))$ 
and $I^{\eta}\varphi(x)=\varphi(x+\eta_{t,z}(x))$  for $x\in\bR^d$ and each $t\in[0,T]$, 
$z\in Z$, for functions $\varphi$ on $\bR^d$.   

We will prove Proposition \ref{proposition uniqueness} by the following lemma. 
\begin{lemma}                                                      \label{lemma cutting}
Let $2\leq q<p$, $r=d(p-q)/(pq)$. Then there is a constant $N$ such that 
the following estimates hold for all $v\in W^1_p$, $R\geq1$, $t\in[0,T]$ and 
$i,j=1,2,...,d$.  
\begin{enumerate}[(i)]
\item 
$
|vD_i\chi_R|_{L_q}\leq NR^{1-r}|v|_{L_p}$, 
\quad
$|vD_{ij}\chi_R|_{L_q}\leq NR^{r-2}|v|_{L_p}$,
\quad 
$|D_i\chi_R D_jv|\leq NR^{r-1}|v|_{W^1_p}$.  
\item $|v\cM\chi_R|_{L_q}\leq NR^{r-2}|v|_{W^1_p}$, 
\quad
$|F_Rv|_{L_q}\leq NR^{r-1}|v|_{W^1_p}$,
\quad
$|G_Rv|_{L_q}\leq NR^{r-1}|v|_{L_p}$. 
\end{enumerate}
\end{lemma}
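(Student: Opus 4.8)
The plan is to derive all six estimates from the same three ingredients. Set $\tfrac1s=\tfrac1q-\tfrac1p$; since $2\le q<p$ we have $s\in(1,\infty)$ and $\tfrac ds=d\big(\tfrac1q-\tfrac1p\big)=r$. The first ingredient is H\"older's inequality $|gh|_{L_q}\le|g|_{L_p}|h|_{L_s}$. The second is the exact scaling of the cut-off: from $\chi_R(x)=\chi(x/R)$ one gets $D_{\alpha}\chi_R(x)=R^{-|\alpha|}(D_{\alpha}\chi)(x/R)$, hence
$$
|D_{\alpha}\chi_R|_{L_s}=R^{\,d/s-|\alpha|}|D_{\alpha}\chi|_{L_s}=R^{\,r-|\alpha|}|D_{\alpha}\chi|_{L_s},
$$
which, $\chi$ being fixed, is $\le NR^{\,r-|\alpha|}$. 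The third is the change-of-variables bound of Corollary \ref{corollary epsilon} (a consequence of Assumptions \ref{assumption M} and \ref{assumption N}): the Jacobian determinants of $\tau_{\theta\eta_{t,z}}$, $\tau_{\theta\xi_{t,z}}$ and of their inverses are bounded above by a constant $N=N(d,K)$ uniformly in $(t,z,\theta)$, so $|g(\cdot+\theta\eta_{t,z})|_{L_s}\le N|g|_{L_s}$ and likewise with $\xi$ in place of $\eta$.

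For part (i) I would apply H\"older with $g\in\{v,D_jv\}$ and $h\in\{D_i\chi_R,D_{ij}\chi_R\}$ and then insert the scaling identity. Using $|D_jv|_{L_p}\le|v|_{W^1_p}$ this gives directly $|vD_i\chi_R|_{L_q}\le NR^{\,r-1}|v|_{L_p}$, $|vD_{ij}\chi_R|_{L_q}\le NR^{\,r-2}|v|_{L_p}$ and $|(D_i\chi_R)(D_jv)|_{L_q}\le NR^{\,r-1}|v|_{W^1_p}$, with $N$ depending only on $d$, $p$, $q$ and $\chi$.

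Part (ii) only requires reducing each operator to a building block of part (i). Since $\chi_R\in C_0^{\infty}$, Taylor's formula \eqref{taylor2} gives $\cM\chi_R(x)=\int_Z\int_0^1(1-\theta)(D_{ij}\chi_R)(x+\theta\eta_{t,z}(x))\,\eta^i_{t,z}(x)\eta^j_{t,z}(x)\,d\theta\,\mu(dz)$; multiplying by $v$, bounding $|\eta^i\eta^j|$ by $\bar\eta^2$, using Minkowski's inequality (Lemma \ref{lemma Minkowski}) in $(z,\theta)$ and then H\"older with the change-of-variables and scaling bounds yields $|v\cM\chi_R|_{L_q}\le N K_{\eta}^2 R^{\,r-2}|v|_{L_p}\le N K_{\eta}^2 R^{\,r-2}|v|_{W^1_p}$. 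For $F_R$ I would write $F_Rv(x)=\int_Z(I^{\eta}\chi_R)(x)\,(I^{\eta}v)(x)\,\mu(dz)$ and, pointwise in $z$, apply H\"older: Taylor's formula gives $I^{\eta}\chi_R(x)=\int_0^1\eta^i_{t,z}(x)(D_i\chi_R)(x+\theta\eta_{t,z}(x))\,d\theta$, whence $|I^{\eta}\chi_R|_{L_s}\le N\bar\eta(z)R^{\,r-1}$, while $|I^{\eta}v|_{L_p}\le N\bar\eta(z)|v|_{W^1_p}$ exactly as in Lemma \ref{lemma IJ}; Minkowski in $z$ and $\int_Z\bar\eta^2\,\mu(dz)=K_{\eta}^2<\infty$ then give $|F_Rv|_{L_q}\le N K_{\eta}^2 R^{\,r-1}|v|_{W^1_p}$. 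The estimate for $G_Rv=\int_Z(I^{\xi}\chi_R)(x)\,(T^{\xi}v)(x)\,\nu(dz)$ is the same, now with $|T^{\xi}v|_{L_p}\le N|v|_{L_p}$ (the $k=0$ case of \eqref{TI}) in place of the bound on $I^{\eta}v$, with $|I^{\xi}\chi_R|_{L_s}\le N\bar\xi(z)R^{\,r-1}$, and with $\int_Z\bar\xi\,\nu(dz)=K_{\xi}<\infty$, giving $|G_Rv|_{L_q}\le N K_{\xi} R^{\,r-1}|v|_{L_p}$.

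I do not expect a real obstacle here: the whole lemma is H\"older's inequality plus the scaling of $\chi_R$ plus the already established boundedness of composition with the diffeomorphisms $\tau_{\theta\eta}$, $\tau_{\theta\xi}$. The only places demanding care are the bookkeeping of the three exponents — checking $s\in(1,\infty)$ and $\tfrac ds=r$ exactly, so that the powers of $R$ come out as $R^{\,r-1}$ and $R^{\,r-2}$ — and the convergence of the $z$-integrals in part (ii), which holds because the integrands there are dominated by $\bar\eta^2$ and $\bar\xi$, whose $\mu$- and $\nu$-integrals are the finite constants $K_{\eta}^2$ and $K_{\xi}$.
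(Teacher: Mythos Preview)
Your proposal is correct and follows essentially the same approach as the paper: define the H\"older conjugate $s$ via $\tfrac1s=\tfrac1q-\tfrac1p$, use the scaling identity $|D_{\alpha}\chi_R|_{L_s}=R^{\,r-|\alpha|}|D_{\alpha}\chi|_{L_s}$ for part (i), and for part (ii) reduce via Taylor's formula, Minkowski in $(z,\theta)$, H\"older, and the change-of-variables bounds from Corollary \ref{corollary epsilon}. The only cosmetic difference is ordering: the paper first bounds $|F_Rv(x)|$ and $|G_Rv(x)|$ pointwise via Taylor on both factors and then applies Minkowski and H\"older, whereas you apply Minkowski and H\"older first and then invoke Lemma \ref{lemma IJ} for $|I^{\eta}v|_{L_p}$ and $|T^{\xi}v|_{L_p}$; the estimates and constants are the same.
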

\begin{proof} Set $s=qp/(p-q)$. 
Then by the chain rule, H\"older's inequality and by the change of variable 
$y=x/R$ we get 
$$
|vD_i\chi_R|_{L_q}=R^{-1}|v(D_i\chi)(\cdot/R)|_{L_q}
\leq R^{-1}|v|_{L_p}|(D_i\chi)(\cdot/R)|_{L_s}=R^{r-1}|v|_{L_p}|D_i\chi|_{L_s}, 
$$
which proves the first estimate in (i). The other estimates in (i) can be proved in the same way. 
Note that it is enough to prove the estimates in (ii) for smooth $v\in W^1_p$. 
In order to prove the first estimate in (ii) recall the notation $\tau_{\theta\eta}(x):=x+\theta_{t,z}\eta(x)$ and  
notice that due to Assumption \ref{assumption M} 
by Taylor's formula we have 
$$
|\cM\chi_R|
\leq R^{-2}\int_{Z}
\int_0^1\bar\eta^2(z)|(D^2\chi)(\tau_{\theta\eta}(x)/R)|\,d\theta\,\mu(dz)
$$
for $t\in[0,T]$ and $z\in Z$. Hence by the Minkowski and H\"older inequalities,  
then by the change of 
variable $y=\tau_{\theta\eta}(x)/R$ and using Assumption \ref{assumption M} we get 
$$
|v\cM\chi_R|_{L_q}\leq R^{-2}\int_{Z}
\int_0^1\bar\eta^2(z)|v|_{L_p}|(D^2\chi)(\tau_{\theta\eta}(x)/R)|_{L_s}\,d\theta\,\mu(dz)
$$
$$
\leq K^2_{\eta}K^{1/s}R^{r-2}|v|_{L_p}|D^2\chi|_{L_s}, 
$$
which proves the first estimate in (ii). To prove the second estimate in (ii) we use 
Taylor's formula and Assumption \ref{assumption M} to get 
$$
|F_Rv|\leq R^{-2}
\int_Z\int_0^1\int_0^1\bar\eta^2(z)
|(D\chi)(\tau_{\theta\eta}(x)/R)||(Dv)(\tau_{\vartheta\eta}(x))|\,d\theta\,d\vartheta\,\mu(dz). 
$$
for every $z\in Z$, $t\in[0,T]$ and $x\in\bR^d$. 
Hence by the Minkowski and the H\"older inequalities and then by changes of variables 
$y=\tau_{\theta\eta}(x)/R$ and $y'=\tau_{\vartheta\eta}(x)$ we obtain 
$$
|F_Rv|_{L_q}\leq R^{-2}
\int_Z\int_0^1\int_0^1\bar\eta^2(z)
|(D\chi)(\tau_{\theta\eta}/R)(Dv)(\tau_{\vartheta\eta})|_{L_q}\,d\theta\,d\vartheta\,\mu(dz) 
$$
$$
\leq \int_Z\int_0^1\int_0^1\bar\eta^2(z)|(Dv)(\tau_{\vartheta\eta})|_{L_p}
|(D\chi)(\tau_{\theta\eta}/R)|_{L_s}\,d\theta\,d\vartheta\,\mu(dz)
$$
$$
\leq K^2_{\eta}K^{1/p}K^{1/s}R^{r-2}|Dv|_{L_p}|D\chi|_{L_s}, 
$$
which proves the second estimate in (ii). Finally, by Taylor's formula
$$
|G_Rv|\leq \int_{Z}\int_0^1\bar \xi(z)|
v(\tau_{\xi})||D\chi(\tau_{\theta\xi}/R)|\,d\theta\,\mu(dz)
$$
for every $(t,z,x)$, where recall that $\tau_{\theta\xi}=x+\theta\xi_{t,z}(x)$ and 
$\tau_{\xi}=x+\xi_{t,z}(x)$.  
Hence by the Minkowski and the H\"older inequalities and then by changes of variables 
$y=x+\xi_{t,z}(x)$ and $y'=(x+\theta\xi(x))/R$ we obtain 
$$
|G_Rv|_{L_q}\leq R^{-1}\int_{Z}\int_0^1\bar \xi(z)
|v(\tau_{\xi})|(D\chi_R)(\tau_{\theta\xi})||_{L_q}\,d\theta\,\mu(dz)
$$
$$
\leq R^{-1}\int_{Z}\int_0^1\bar \xi(z)
|v(\tau_{\xi})|_{L_p}|(D\chi_R)(\tau_{\theta\xi})|_{L_s}\,d\theta\,\mu(dz)
$$
$$
\leq K^{1/p}K^{1/s}K_{\xi}R^{r-1}|D\chi|_{L_s}, 
$$
which proves the last estimate in (ii).
\end{proof}
\begin{proof}[Proof of Proposition \ref{proposition uniqueness}]
 Let $q\in\cU$, $p\in[q,q+1/d]$, assume that Assumptions \ref{assumption L} 
 through \ref{assumption R} hold with $p$, 
 and let $u_i\in \bW^1_p$ be generalised solution 
 to equation \eqref{eq1} (with $\cR=0$) with initial condition $u_i(0)=\psi$ 
 for $i=1,2$. Then $v:=u_1-u_2$ is a generalised solution of \eqref{eq1} 
 (with $\cR=0$), $v(0)=0$ and $f=0$. 
 We want to get an equation for $v_R:=v\chi_R$. To this end notice first that 
 for $w\in W^2_p$ we have 
 $$
 \chi_R\cL w
 =\cL(\chi_Rw)-b_iwD_i\chi_R-a^{ij}wD_{ij}\chi-2a^{ij}D_iwD_j\chi_R, 
 $$
 \begin{equation}                                                         \label{MR}
 \chi_R\cM w=\cM(\chi_Rw)-w\cM\chi_R-F_Rw, \quad 
 \chi_R\cN w=\cN(\chi_Rw)-G_Rw, 
 \end{equation}
 where the operators $F_R$ and $G_R$  are defined in \eqref{FG}. 
 Clearly, $v_R\in W^1_q$. Hence, by using test functions $\chi_R\varphi$ instead of 
$\varphi$  in \eqref{solution},  it is not difficult to see that $v_{R}$ is a generalised 
solution of 
$$
dv_R(t)=(\cA v_R(t)+f_R(t))\,dt, \quad v_R(0)=0, 
$$
with 
$$
f_R=-b_ivD_i\chi_R-a^{ij}vD_{ij}\chi-2a^{ij}D_ivD_j\chi_R
-v\cM\chi_R-F_Rv-G_Rv,  
$$
which by the previous lemma belongs to $\bL_q=L_q([0,T],L_q)$. 
Since $q\in\cU$, this solution is unique. Consequently, we can apply the estimate 
we have for the solutions constructed in the existence proof, according to which 
we have 
$$
|v_R|_{\bL_q}\leq N|f_R|_{\bL_q} 
$$
with a constant $N$ independent of $R$. Using Lemma \ref{lemma cutting} it is easy to 
see that there is a constant $N'$ such that 
$$
|f_R|_{\bL_q}\leq N'R^{r-1}|v|_{\bL_p},  
$$
for all $R\geq1$. Hence 
\begin{equation}                                                      \label{eqR}
|v_R|_{\bL_q}\leq NN'R^{r-1}|v|_{\bL_p}
\end{equation}
for all $R\geq1$. Notice that for $p\in[q, q+d^{-1}]$ we have 
$$
r-1=\frac{dp-dq-pq}{pq}\leq \frac{1-q^2}{pq}<0. 
$$
Consequently, letting $R\to\infty$ in \eqref{eqR} we get $|v|_{\bL_q}=0$, 
which finishes the proof of the proposition. 
\end{proof}

\smallskip
\noindent 
 {\bf Acknowledgement.} A generalisation of Theorem \ref{theorem main} 
 to stochastic integro-differential equations was presented at the 
 conference on ``Harmonic Analysis for Stochastic PDEs" in Delft, 10-13 July, 2018 and 
 at  the ``9th International Conference
on Stochastic Analysis and Its Applications" in Bielefeld, 3-7 September, 2018. 
 The authors are grateful to the organisers of these conferences for the invitation 
and for discussions.  
They also thank Alexander 
 Davie in Edinburgh University for correcting some mistakes.

\end{document}